\title[Quasisymmetries of the basilica]{Quasisymmetries of the basilica\\ 
and the Thompson Group}
\author{Mikhail Lyubich and Sergei Merenkov}
\address{Mikhail Lyubich\\Institute for Mathematical Sciences\\
Stony Brook University\\
Stony Brook, NY 11794\\USA }
\email{mlyubich@math.stonybrook.edu}
\thanks{M.L. was supported by NSF grants 
DMS-1007266, DMS-1301602, DMS-1600519.}
\address{Sergei Merenkov\\
Department of Mathematics\\City College of New York and CUNY Graduate Center,  New York, NY 10031, USA
}
\email{
smerenkov@ccny.cuny.edu}
\thanks{S.M. is grateful for the support by the Simons Foundation at the IMS}
\thanks{}
\newcommand\C{{\mathbb C}}
\newcommand\hC{\widehat {\mathbb C}}
\newcommand\N{{\mathbb N}}
\newcommand\D{{\mathbb D}}
\newcommand\Z{{\mathbb Z}}
\newcommand\R{{\mathbb R}}
\newcommand\T{{\mathbb T}}
\newcommand\Ju{\mathcal J}
\newcommand\dee{\partial}
\newcommand\id{\operatorname{id}}
\renewcommand\:{\colon}
\newcommand\ra {\rightarrow}
\newcommand{\KK}{{\mathcal{K}}}
\newcommand{\bga}{\boldsymbol{\gamma}}
\newcommand{\di}{\partial}
\newcommand{\inter}{\operatorname{int}}
\newcommand\eps{\epsilon}
\newcommand\no{\noindent}
\numberwithin{equation}{section}
\newtheorem{theorem}{Theorem}[section]
\newtheorem{corollary}[theorem]{Corollary}
\newtheorem{lemma}[theorem]{Lemma}
\theoremstyle{definition}
\newcommand{\comm}[1]{}
\newcommand{\FF}{{\mathcal{F}}}
\newcommand{\ZZ}{{\mathcal{Z}}}
\newcommand{\YY}{{\mathcal{Y}}}
\newcommand{\sm}{\setminus} 
\newcommand{\de}{\delta}
\newcommand{\cl}{\operatorname{cl}}
\newcommand{\tl} {\tilde}
\newcommand{\Ups}{\Upsilon} 
\newcommand{\msk}{\medskip} 
\newcommand{\bal}{{\boldsymbol{\alpha}}}
\newcommand{\inv}{\psi}
\newcommand{\Dyn}{\operatorname{Dyn}}
\newcommand{\TT}{{\mathfrak{T}}}
\begin{document}

\abstract{
We give a description of the group of all quasisymmetric self-maps of the Julia set of $f(z)=z^2-1$ that have orientation preserving homeomorphic extensions to the whole plane. More precisely, we prove that this group is the uniform closure of the group generated by the Thompson group of the unit circle and an inversion. Moreover, this result is quantitative in the sense that distortions of the approximating maps are uniformly controlled by the distortion of the given map.
}
\endabstract

\maketitle

\section{Introduction}\label{S:Intro}
\no
Quasisymmetric geometry of fractal sets has attracted  substantial interest in recent years.
A natural invariant in this category is the group of quasisymmetries of the set.
One can roughly classify such a set as ``little quasisymmetric'' or ``highly quasisymmetric'', depending on whether
this group is finite or infinite. We are interested in this dichotomy for Julia sets of rational maps.
In our previous paper~\cite{BLM}, 
joint with Mario Bonk,  we described a class of little quasisymmetric  Julia sets that are Sierpi\'nski carpets. 
The goal of this paper is to give an example of a highly quasisymmetric Julia set, the  {basilica}
(see Figure~\ref{F:QJ}), 
and to describe its group of quasisymmetries. 
%
%
\begin{figure}
[htbp]
\begin{center}
\includegraphics[height=40mm]{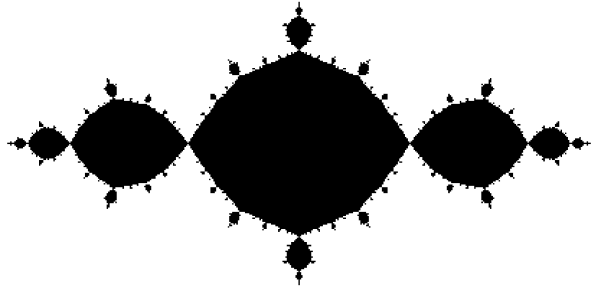}
\caption{
The filled Julia set of $f(z)=z^2-1$.
}
\label{F:QJ}
\end{center}
\end{figure}

To formulate the main result,
let us give quick definitions of the main objects, referring to the main body of the paper for 
their precise versions. 


Let  $f\: \C\ra \C$ be a polynomial of degree $\geq 2$.
Its \emph{filled Julia set} $\KK(f)$
is defined as the set of non-escaping points, 
and the {\em Julia set} $\Ju(f)$ is defined as the boundary of $\KK(f)$.  

The \emph{basilica} is the filled Julia set of the quadratic polynomial $f\: z\mapsto z^2-1$.%
\footnote{In this paper, we refer informally to the corresponding Julia set $\Ju$ also as ``basilica''.} 
This polynomial has a  superattracting cycle $\bga = \{0,-1\}$ of period two,  
and $\inter \KK(f)$ coincides with the {basin} of this cycle. 
By definition, the \emph{immediate basin} of $\bga$ is the union of two components $U_0$ and $U_{-1}$
of the basin containing $0$ and $-1$, respectively.
These components are Jordan discs, and the Riemann mapping $\phi_0\: U_0\ra \D$, such that $\phi_0(0)=0,\ \phi_0'(0)<0$, brings the return map 
$f^2\: U_0\ra U_0$ to the monomial form $g\: z\mapsto z^2$.

Given a  homeomorphism  $\eta\: [0,\infty)\to[0,\infty)$, 
a homeomorphism $h\: \Ju(f)\ra \Ju(f)$ is called a \emph{quasisymmetry} 
with \emph{distortion function} $\eta$ (or $\eta$-\emph{quasisymmetry}) if 
$$
\frac{|h(u)-h(v)|}{|h(u)-h(w)|}\leq\eta
\bigg(\frac{|u-v|}{|u-w|}\bigg),
$$
for all triples of distinct points $u,v$, and $w$ in $\Ju(f)$.
In fact, this definition is applicable to a homeomorphism $h\: X\ra X$ of any metric space $X$, with the Euclidean distance replaced by the corresponding metric.
Moreover, in the case of the complex plane, $X=\C$, 
$\eta$-quasisymmetry is equivalent to $K$-quasiconformality, quantitatively
(provided the map is normalized at two points).
We say that a homeomorphism of $\Ju(f)$ is \emph{topologically extendable} if it has an extension to an orientation preserving homeomorphism of $\C$.


The \emph{Thompson group} $T$  is defined as the group of piecewise linear
(in the angular coordinate)  homeomorphisms of the unit circle $\T$ with breaks at some dyadic points and slopes
equal to integer powers of $2$. By means of the Riemann mapping $\phi_0$, we can make this group act on $\di U_0$.
In this paper, we show that this action admits an extension to an action by  quasisymmetries of the basilica Julia set $\Ju\equiv\Ju(f)$.
We also construct one more quasisymmetry of $\Ju$, an involution $\iota$, which permutes the components
$U_0$ and $U_{-1}$ of the immediate basin.
Let $\hat T$ be the \emph{extended Thompson group} of quasisymmetries of $\Ju$ generated by $T$ and $\iota$.
Our main result asserts that this group generates quantitatively the whole group of topologically extendable  quasisymmetries of $\Ju$:

\begin{theorem}\label{thm:thom}
For any distortion function $\eta$ there exists a distortion function $\eta'$, such that  
for any topologically extendable $\eta$-quasisymmetry $\xi$ of the basilica Julia set $\Ju$ 
there exists a sequence of $\eta'$-quasisymmetries $\tau_n$ of $\Ju$ that belong to the 
extended Thompson group $\hat T$ and 
uniformly converge to $\xi$.
\end{theorem}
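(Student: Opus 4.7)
The plan is to reduce Theorem \ref{thm:thom} to a finite-level combinatorial matching inside $\hat T$ and then promote that matching to a uniformly quasisymmetric approximant. The Julia set $\Ju$ is the union of the Jordan curves $\di U$ over all bounded Fatou components $U$, meeting at a countable dense set of pinch points. This incidence pattern is encoded by a tree $\TT$ whose vertices are the bounded Fatou components and whose edges record tangencies. A topologically extendable quasisymmetry $\xi$ respects pinch points and the Fatou decomposition (both are topological invariants of $\Ju$), hence induces a combinatorial automorphism $\xi_*\:\TT\to\TT$, and on each $\di U$ it restricts to a quasisymmetry of a Jordan curve that permutes its pinch points.

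First I would analyze how $\hat T$ acts on $\TT$. Via the Riemann map $\phi_0$, each element of $T$ acts on $\di U_0$ as a map that is piecewise linear in angular coordinates, with breaks at the pinch points (whose external angles are dyadic), slopes in $2^\Z$, and a corresponding combinatorial permutation of the subtrees rooted at those pinch points. The dynamics $f^2\:U_0\to U_0$ propagates this action to all deeper levels of $\TT$ self-similarly, while the involution $\iota$ swaps the $U_0$- and $U_{-1}$-halves of $\TT$. The key combinatorial lemma to establish is that $\hat T$ realizes every automorphism of every finite truncation $\TT_n$ of $\TT$ that is compatible with the pinch-point combinatorics and can arise from some quasisymmetry.

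Next, for each $n$, I would pick $\tau_n\in\hat T$ with $\tau_n$ inducing $\xi_*$ on $\TT_n$: on each boundary $\di U$ of level $\le n$, prescribe the piecewise-linear description of $\tau_n$ so that the intervals between consecutive pinch points at level $\le n$ map as $\xi$ maps them (up to the rounding imposed by the slope set $2^\Z$), and extend self-similarly to deeper levels via the Thompson-style definition. Since Fatou components of level $>n$ have Euclidean diameter tending to zero uniformly in $n$ (a consequence of hyperbolicity of $f^2$ on a neighborhood of $\Ju\cap\overline{U_0}$ together with the uniform John geometry of $\di U_0$), agreement of $\tau_n$ and $\xi$ on the level-$n$ skeleton forces uniform convergence $\tau_n\to\xi$ on $\Ju$.

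The main obstacle will be to bound the distortions of the $\tau_n$ by a single $\eta'$ depending only on $\eta$. The maps $\tau_n$ have unboundedly many breakpoints, so their quasisymmetry cannot come from their piecewise-linear structure alone; it must be imported from $\xi$. The idea is that $\eta$-quasisymmetry of $\xi$ controls the ratios of adjacent pinch-point intervals along each $\di U$, and the uniform John geometry of these boundaries, combined with distortion control for the Riemann maps $\phi_0,\phi_{-1}$, converts Euclidean ratios into comparable angular ratios on $\T$. Because the Thompson slopes $2^\Z$ discretize these ratios only up to a bounded multiplicative factor, each $\tau_n$ reproduces the quasisymmetric geometry of $\xi$ up to an error controlled by $\eta$ and the fixed geometry of the basilica alone. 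Packaging these distortion bounds coherently across all Fatou components and all scales — so that the quasisymmetric constant does not degrade as the combinatorial matching level $n$ grows — will be the most technical step of the argument.
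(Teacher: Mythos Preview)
Your overall architecture---approximate $\xi$ at each combinatorial level by a piecewise dynamical element of $\hat T$, verify uniform convergence, and then control the distortion uniformly in the level---matches the paper's. You also correctly locate the real difficulty in the last step. But your sketch of that step contains a genuine gap.

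The problem is the sentence ``Because the Thompson slopes $2^{\Z}$ discretize these ratios only up to a bounded multiplicative factor, each $\tau_n$ reproduces the quasisymmetric geometry of $\xi$\ldots''. On a single $\partial U$ you can indeed linearly interpolate $\xi$ at the equally spaced level-$n$ cut points and get a uniformly quasisymmetric circle map (this is the paper's Lemma~\ref{L:Main}), and you can then replace each linear piece by a Thompson piece with at most two slopes at bi-Lipschitz cost $2$. What this does \emph{not} give you is uniform quasisymmetry of $\tau_n$ on $\Ju$. The approximant must be defined on the limbs attached to $\partial U$, not just on $\partial U$ itself, and those extensions are forced: on each linear piece the map is a fixed branch of $f^{-m}\circ f^{j}$, and these branches must agree with the prescriptions you make on the deeper $\partial V$'s. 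Your proposal to ``prescribe the piecewise-linear description of $\tau_n$'' independently on each $\partial U$ of level $\le n$ ignores this consistency constraint, and once you enforce it you lose the freedom to round slopes piece by piece. In the paper this is handled not on the tree but via the dynamical partition $\Pi_n$ of $\Ju$ into Julia arcs $J_i$: one defines $\tau$ on each $J_i$ as $f^{-M_i}\circ\mathcal T_i\circ f^{n_i}$, where $f^{n_i}$ blows $J_i$ up to the fixed model $\Ju_0$ and $f^{-M_i}$ blows a model target $\tilde\Lambda_i$ down to $\xi(J_i)$. The decisive technical point---which has no counterpart in your outline---is Lemma~\ref{L:ConfElev}: the blown-up images $\tilde\Lambda_i=f^{M_i}(\xi(J_i))$ lie in a \emph{finite} family $\mathcal F$ depending only on $\eta$. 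This is what makes the middle maps $\mathcal T_i$ come from a finite list and hence gives the uniform bi-Lipschitz comparison (Lemma~\ref{L:BiLip}) between $\tau$ and the linear interpolant $\xi_E$ on every $\partial U$ and on the outer circle simultaneously. Without such a finite-family reduction, the image arcs $\xi(J_i)$ can have endpoints at arbitrarily deep dyadic levels, the Thompson interpolants on them are of unbounded combinatorial complexity, and there is no mechanism in your argument that prevents the global distortion from blowing up with $n$.
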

%
%

Thus, the basilica is highly quasisymmetric.
As we have already mentioned, this 
contrasts with the result of  \cite{BLM} concerning Sierpi\'nski Julia sets.
Note that the {standard} Sierpi\'nski carpet is little quasisymmetric as well \cite{BM}.
On the other hand, it was shown in \cite{Me} that the ``slit carpet''
is highly quasisymmetric,  and the corresponding group of quasisymmetries bears some
similarity with the Thompson group.

Note that the limit sets of non-elementary Kleinian groups are {highly symmetric},
as they admit infinite groups of M\"obius symmetries. 
Let us say that a compact subset $K\subseteq\hC$ of the sphere is \emph{rigid} if all its quasisymmetries are
M\"obius. It was shown in \cite{BKM} that   ``Schottky sets'' of zero area are rigid,
contrasting again with our result. 

Let us also mention a recent work of J.~Belk and B.~Forrest \cite{BF} who studied 
a Thompson-like group of circle homeomorphisms that preserve the invariant lamination of the basilica, 
and hence descend to homeomorphisms of the basilica itself.
Our result implies that these homeomorphisms are quasisymmetric
(and, in fact, form a dense subgroup of basilica quasisymmetries). 
Unlike \cite{BF}, we approach the problem from ``inside'' of the basilica,
beginning with the Thompson group acting on  the immediate basin.

Let us also note that 
a minor modification  of our construction (that does not need Lemma~\ref{L:ConfElev}) would produce the following result. If $\xi$ in Theorem~\ref{thm:thom} is only assumed to be a homeomorphic self-map of $\Ju$ that has an orientation preserving homeomorphic extension to $\C$, then there is a sequence of maps $\tau_n,\ n=1,2,\dots$, in $\hat T$ that converge uniformly on $\Ju$ to $\xi$. 
So we can describe the full group of extendable homeomorphisms of $\Ju$ as the uniform closure of $\hat T$. However, even though each map $\tau_n$ is quasisymmetric, one generally does not have a uniform control on their distortion functions. The key technical step of our construction is Lemma~\ref{L:ConfElev} that provides us with uniform quasisymmetric control of approximating maps.

Let us finally note that our method extends in a straightforward way to 
hyperbolic Julia sets in the ``main molecule'' of the Mandelbrot set
(obtained from $z^2$ through a finite cascade of satellite bifurcations),
e.g., to the  Douady rabbit.

\subsection{Notation and terminology}\label{SS:Not} Throughout the paper we denote $\N\cup\{0\}$ by $\N_0$. 
We let $\D$ be the open unit disc in the complex plane $\C$, and let 
 $\T=\di \D$  be the unit circle in $\C$.

In what follows, we  label a point $z=e(\theta) = e^{2\pi i \theta} \in \T$ by $\theta\in \R/\Z$. 
In other words, the angular measure on the circle $\T$ is scaled so that its total length is equal to 1,
which is standard in dynamics.
Points 
 $e( (2k+1) /2^n)\in \T$, $k=0,1,\dots, 2^{n-1}-1,\ n\in\N$,
are called \emph{dyadic points} \emph{of level $n$}, and $e(0)$ is \emph{the dyadic point of level} $0$.
Taking the union of dyadic points of levels $m\leq n$, we obtain $2^n$ points
$e(l/2^n)$, $l=0,1,\dots, 2^n-1$, that tessellate the circle  
into $2^n$ (closed) intervals $I^n_l$. We will refer to these intervals as
\emph{dyadic intervals of level $n$}. 

Let $g$ denote the map $z\mapsto z^2$,
which is \emph{ doubling} in the angular coordinate of $\T$, i.e.,  
$\theta\mapsto 2\theta\ \mod \Z$.  
The dyadic points  $z\in \T$ are 
dynamically identified as  the iterated preimages of the fixed point $1$, 
with the level equal to the smallest $n\in\N_0$ such that  $g^n(z)=1$.

\section{Dyadic subdivision of bounded Fatou components}\label{S:Dyad}
 \no
The reader can consult \cite{CG,Mi} for a general introduction to the  iteration theory of rational functions,
and  \cite{DH,L} for particular features of the dynamics of quadratic polynomials.  

The \emph{Fatou set}  is the complement of the Julia set, $\FF(f)=\C\sm \Ju(f)$.  
Its connected components are called \emph{Fatou components}. 
Bounded Fatou components can also be identified as the connected components of  the interior of the filled Julia set $\KK(f)$. 

As we have already mentioned, 
the \emph{basilica map} $f(z)=z^2-1$ is specified by the property that it has a superattracting periodic cycle $\{0,-1\}$ of period two. 
In particular, it is a postcritically finite {hyperbolic} map. 
We let $U_0$ and $U_{-1}$ be the Fatou components of $f$ that contain $0$ and $-1$, respectively.
Both of these components are Jordan discs.
The map $f$ takes $U_0$ onto $U_{-1}$ as a double branched covering, 
and it takes $U_{-1}$ back  onto $U_0$ conformally. 

Let $\phi_0\: (\overline{U_0},0, \alpha) \ra (\overline{\D},0,1)$ be the {\em B\"ottcher coordinate} of $U_0$
(which coincides with the appropriately normalized Riemann uniformization of  $U_0$),
homeomorphically extended  to the boundary.
It conjugates the return map $f^2\: \overline{U_0}\ra \overline{U_0} $
 to the monomial map $g\: \overline{\D} \ra \overline{\D}$, $z\mapsto z^2$. 

The left-most point of the closure $\overline{U_0}$ on the real line is a fixed point  $\alpha$ of $f$, 
is equal to $(1-\sqrt{5})/2$. It is also the unique point of intersection of $\overline{U_0}$ and $\overline{U_{-1}}$, 
called the \emph{root} of each of these components. 
 Moreover, $\alpha$ is a \emph{global cut-point} for the Julia set $\Ju$:
puncturing  $\alpha$ out results in  breaking  $\Ju$ into two 
connected components. 
All the preimages of $\alpha$ under the iterates of $f$ are therefore also global cut-points.
The other fixed point of $f$ is $\beta=(1+\sqrt{5})/2$. 
It is the right-most point of the intersection of $\Ju$ with the real line. 

Every bounded Fatou component $U$ of $f$ eventually, i.e., under a certain iterate of $f$, lands in the cycle $\{U_0, U_{-1}\}$. 
In fact, for each $U$ there exists a unique $n\in\N_0$ such that $f^n\: U\to U_0$ is a conformal map. 
We call such $n$ the \emph{dynamical distance} from $U$ to $U_0$.
It follows that all bounded Fatou components of $f$ are Jordan discs as well.  
Then the map $\phi_{U}=\phi_{0}\circ f^n$ is a conformal map of $U$ onto $\D$
that extends to a homeomorphism $\overline{U}\ra \overline{\D}$. It is called the {\em B\"ottcher coordinate} of $U$. 
The \emph{root} $\alpha_U\in \di U$ of $U$ is defined as the preimage  of the root  $\alpha\in U_0$ under $f^n$,
or equivalently, as the point in $\di U$  whose B\"ottcher coordinate  $\phi_U(\alpha_U)$ is equal to $1\in \T$.

Similarly, we define \emph{dyadic points} on $\di U$ as points whose B\"ottcher coordinates are dyadic.
Dyadic points of level $\leq m$ induce a \emph{ level $m$ dyadic subdivision} of $\di U$. 
%
%
It can also  be described as follows: 
for a  bounded Fatou component $U$ of dynamical distance $n\ge0$ from $U_0$ 
and for an integer number $m\ge0$, 
the full preimage $f^{-(n+2m)}(\alpha)$ gives the dyadic decomposition of $\dee U$ of level $m$ that consists of $2^{m}$ points. 

The basilica filled Julia set $\KK\equiv \KK(f)$   has the following combinatorial structure of a (non-locally finite) tree  $\TT$.  
The vertices of $\TT$  are the bounded Fatou components of $f$.  
(In what follows we make no distinction between bounded Fatou components and the vertices of $\TT$.)
Two vertices $U$ and $V$  are connected by an edge $E$ if and only if 
they \emph{touch}, 
i.e., their closures intersect. 
Let $\de(U)$ stand for the {combinatorial distance} in this tree from a vertex $U$ to the \emph{main vertex} $U_0$. 
If two vertices $U$ and $V$ are adjacent, then $|\de (U) - \de(V)|=1$.   
Under these circumstances,  if $\de(U) = \de(V)+1$ then $V$ is called  
the \emph{principal} vertex of $E$, while $U$ is called a \emph{satellite} of $V$. 
%
%

The \emph{main edge} of $\TT$, denoted $E_0$, is the edge that connects $U_0$ to  $U_{-1}$
via their common root $\alpha$. 
%
The higher level  edges are described as follows:

\begin{lemma}\label{L:Basepoint}
Let $U$ and $V$ be distinct bounded Fatou components that touch at a point $z$. 
Let $n\in\N_0$ and $m\in\N_0$ be the dynamical distances from $U$ and $V$ to $U_0$, respectively, with $n \geq m$. 
Then $U$ is a satellite of $V$ 
and the root  of $U$ is $z$. 
\end{lemma}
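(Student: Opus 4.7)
The plan is to reduce the lemma to the case $V=U_0$ via the conformal iterate $f^m$, and then establish the conclusions through a parity argument based on the Jordan-disc property of $U_0$.

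First I would observe that $z$ lies on the Julia set, whereas the critical points of any iterate $f^k$ are preimages of the single critical point $0\in \inter U_0$ and hence lie in the Fatou basin. Thus $f^k$ is a local homeomorphism at $z$ for every $k$. Next, the definition of the dynamical distance $n$, together with the fact that a composition of holomorphic maps is conformal only if each factor is, implies that $f^m|_U$ is a conformal map of $U$ onto a bounded Fatou component $U':=f^m(U)$ of dynamical distance $n-m$ from $U_0$. Applying $f^m$ carries the touching pair $(U,V)$ at $z$ to the touching pair $(U',U_0)$ at $z':=f^m(z)$; local injectivity at $z$ forces $U'\neq U_0$, since otherwise the distinct local regions of $U$ and $V$ at $z$ would be identified into a single local region of $U_0$ at $z'$. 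Hence $n>m$, and it suffices to establish the lemma in the reduced case $V=U_0$ and pull back through $f^m$.

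In the reduced case, I would apply $f^n$ to the pair $(U,U_0)$ at $z$. The conformal homeomorphism $f^n\colon \overline{U}\to \overline{U_0}$ sends the local region of $z$ in $U$ to a local region of $f^n(z)$ in $U_0$; and since $f^n(U_0)$ equals $U_0$ if $n$ is even and $U_{-1}$ if $n$ is odd, the local region of $z$ in $U_0$ is sent to a local region of $f^n(z)$ in $f^n(U_0)$. By local injectivity of $f^n$ at $z$ these two image regions are distinct. If $n$ were even, both would lie inside the single local region of $U_0$ at $f^n(z)$ (which exists and is unique because $U_0$ is a Jordan disc and hence has one interior side at each boundary point), forcing them to coincide — a contradiction. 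Hence $n$ is odd, $f^n(z)\in \partial U_0 \cap \partial U_{-1}=\{\alpha\}$, and $f^n(z)=\alpha$.

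The identity $f^n(z)=\alpha$ together with $z\in \partial U$ and the homeomorphism $f^n\colon \overline{U}\to \overline{U_0}$ identifies $z$ as the unique preimage of $\alpha$ on $\partial U$, namely the root $\alpha_U$ of $U$. For the satellite assertion $\delta(U)=\delta(V)+1$: in the reduced case $V=U_0$ this follows immediately from $\delta(U_0)=0$ and the adjacency of $U$ and $U_0$ in $\TT$. In general, one invokes the structural fact that exactly two bounded Fatou components (namely $U_0$ and $U_{-1}$) meet at $\alpha$; by local injectivity of $f^n$ at $z$, exactly two bounded Fatou components meet at $z=\alpha_U$, and $V$ must therefore coincide with the tree-parent of $U$ in $\TT$, yielding $\delta(V)=\delta(U)-1$.

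The main obstacle I anticipate is the parity step in the second paragraph, which requires a careful local-region accounting through the homeomorphism $f^n$ and leans crucially on the Jordan-disc property of $U_0$ to rule out the even case. A secondary subtle point is the identification of $V$ with the tree-parent in the last paragraph, which couples the local structural fact at pinch points with a global tree-consistency argument.
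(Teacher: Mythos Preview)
Your argument is correct and shares the paper's overall strategy: reduce to $V=U_0$ by applying $f^m$, then exploit local injectivity of iterates of $f$ at Julia points to conclude $f^n(z)=\alpha$. The core step differs in execution. The paper stops one iterate short, observing that $f^{n-m-1}$ carries $f^m(U)$ to either $U_{-1}$ or $U_1$, and then argues that $f^{n-m-1}(U_0)=U_0$, whence $f^{n-1}(z)\in\{\alpha,-\alpha\}$ and $f^n(z)=\alpha$. Your parity argument instead goes directly to level $n$ and uses the Jordan-disc property of $U_0$ to force $n$ odd; this is a clean alternative that avoids the case split on $U_{-1}$ versus $U_1$. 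For the satellite claim the paper asserts (without further detail) that $f^k$ for $k\le m$ preserves the difference of combinatorial distances to $U_0$, whereas you derive it from the root identification together with a tree-consistency step; both treatments are terse, and the ``secondary subtle point'' you flag---that the component meeting $U$ at its root is necessarily the tree-parent---is genuine but no harder to fill than the paper's corresponding assertion.
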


\no
\emph{Proof.} 
The distances $m$ and $n$ cannot be equal  because otherwise we get a contradiction with the fact that $f^m$ is conformal in a neighborhood of $z$. 
Application of $f^k,\ k\leq m$,  preserves the differences in dynamical and in combinatorial distances from $U$ and $V$ to $U_0$, respectively.  This implies that the combinatorial distance from $V$ to $U_0$ in $\TT$ is one less than that from $U$ to $U_0$.   Now,  $f^m(U)$ is a bounded Fatou component that touches $f^m(V)=U_0$. The point of the intersection is $f^m(z)$, and $n-m\in\N$ is the dynamical distance from $f^m(U)$ to $U_0$.  Thus $f^{n-m-1}(f^m(U))$ is either $U_{-1}$ or $U_1$, the bounded Fatou components that contain $-1$ and 1, respectively. In either case we must have  $f^{n-m-1}(U_0)=U_0$ because $f$ is at most two to one in each Fatou component. Hence $f^{n-1}(z)=f^{n-m-1}(f^m(z))$ is either $\alpha$ or $-\alpha$, and thus $f^{n}(z)=\alpha$, i.e., $z$ is the root of $U$. 
\qed

\medskip

Thus, every edge $E$ 
can be labeled by the principal component $V$ and 
a dyadic number $d\in \T$ representing the point on $\di V$ where the satellite component $U$ touches $V$.    

\section{
Dynamical partitions of the Julia set $\Ju$}\label{S:Lam}
\no
Let $U_\infty$ denote the \emph{basin at infinity}, i.e., the unbounded Fatou component of $f$, and
let $\phi_\infty$ be the \emph{B\"ottcher coordinate} of $U_\infty$. 
Namely, $\phi_\infty$ is the conformal map of $U_\infty$ onto the complement of the closed unit disc $\C\setminus{\overline\D}$ 
conjugating $f$ to $g\: z\mapsto z^2$.

The {\em external ray} $\rho^\theta$ with angle $\theta\in \R/\Z$ is defined as  the pullback of the straight ray
$\rho^\theta= \{ r e(\theta)\:   1< r<\infty \} $ under the B\"ottcher map $\phi_\infty$. 
The external rays form an invariant foliation with $f$ acting by the angle-doubling:
$f(\rho^\theta) = \rho^{2\theta}$. 

It is known that the Julia set of a hyperbolic map $f$ is {locally connected},
so the inverse B\"ottcher map  
$ \inv_\infty  = \phi_\infty^{-1}\: \D\ra U_\infty$ extends continuously to the boundary, and so induces a continuous boundary map
of $\T$ onto $\Ju$. It follows that any ray $\rho^\theta$ {\em lands} at some point 
$z_\theta\in \Ju$.   For a circle  arc $I\subseteq \T$, we call
$$
      \inv_\infty (I) = \{ z_\theta\:  e(\theta)\in I \}  
$$ 
a {\em Julia arc}.

In the case of the basilica, there exist exactly two rays, $\rho^{\pm 1/3}$, that land at the fixed point $\alpha$.
They bound two (open) sectors, $S_0\supset U_0$ and $S_\alpha\supset U_{-1}$.
The latter sector is also called the \emph{wake  rooted at}  $\alpha$. 
The intersection $ \KK_\alpha:= (\KK\cap S_\alpha)  \cup \{ \alpha \} $
(with the added root)  is
called the \emph{limb} of $\KK$ \emph{rooted at} $\alpha$. 
We denote by $\Ju_\alpha$ the Julia arc $\dee \KK_\alpha$. 

This picture can be spread around to the iterated preimages of
$\alpha$.
Let  $z\in \Ju$ and  $f^n(z) = \alpha$ for some $n\in \N_0$.
We choose  the smallest moment $n$ like this. Then $z$ is the landing point
of exactly two rays,  $f^n$-preimages of $\rho^{\pm 1/3}$. 
They bound the unique (open) sector $S_z$ that does not contain $U_0$;
it is called the \emph{wake rooted at}  $z$. 
Moreover, if $n\geq1$, then $f^n$ conformally maps $S_z$ onto $S_0$. 
Let $\KK_z: = (S_z\cap \KK)\cup \{z\}$ be the \emph{limb} of $\KK$
\emph{rooted} at $z$, and  let $\Ju_z:= \dee \KK_z$ be the corresponding Julia arc. Below we may also refer to $\Ju_z$ as a \emph{limb rooted at} $z$.

Let $\mathcal R_n,\ n\in\N_0$, be the family of all  external rays in $U_\infty$ 
that land at points of the full preimage $f^{-n}(\alpha)$. 
Note that each external ray of $\mathcal R_n$ lands on the boundaries of exactly two bounded Fatou components,
adjacent components of the tree $\TT$. 

We say that distinct external rays $\rho_1$ and $\rho_2$ of $\mathcal R_n$ are  \emph{adjacent} 
if they are not separated in $U_\infty$  by other external rays of $\mathcal R_n$
(in other words, the angles $\theta_1, \theta_2$ are adjacent points of the set $g^{-n}\{\pm1/3\}\subset\T$).

\begin{lemma}\label{L:Adj}
If $\rho_1, \rho_2\in\mathcal R_n$ are adjacent external rays, then there exists a bounded Fatou component $U$ of $f$ such that $\rho_1$ and $\rho_2$ land at boundary points of  $U$. 
\end{lemma}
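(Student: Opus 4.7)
I would proceed by induction on $n$. The base case $n=0$ is immediate: $\mathcal R_0 = \{\rho^{1/3}, \rho^{2/3}\}$, and both rays land at $\alpha \in \partial U_0$, so $U := U_0$ works.

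For the inductive step, let $\rho_1 = \rho^{\theta_1}, \rho_2 = \rho^{\theta_2}$ be adjacent in $\mathcal R_n$ with $n \geq 1$, and let $S$ be the unique open sector of $U_\infty$ bounded by $\rho_1, \rho_2$ that contains no other ray of $\mathcal R_n$. First I would show that $f(\rho_1), f(\rho_2) \in \mathcal R_{n-1}$ are distinct: since $g^{-n}\{\pm 1/3\}$ is symmetric under $\theta \mapsto \theta + 1/2$ and has at least $4$ elements for $n \geq 1$, no adjacent pair can differ by exactly $1/2$. Next I would show they are adjacent in $\mathcal R_{n-1}$: any $\rho^\phi \in \mathcal R_{n-1} \subseteq \mathcal R_n$ strictly between $f(\rho_1)$ and $f(\rho_2)$ in $U_\infty$ would have one of its two preimages under $g$ lying in the short arc $(\theta_1, \theta_2)$, producing a ray in $\mathcal R_n$ strictly between $\rho_1$ and $\rho_2$ and contradicting their adjacency. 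Strengthening the induction to select the Fatou component $U'$ adjacent to the sector $f(S)$ (the one whose boundary contains an arc from $f(z_{\theta_1})$ to $f(z_{\theta_2})$ on the Julia-set boundary of $\overline{f(S)}$), the inductive hypothesis supplies such a $U'$.

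For the lifting, I would consider $f^{-1}(U')$. If $U' = U_{-1}$, the component containing the critical value $-1$, then $f^{-1}(U') = U_0$ is a single Fatou component and $z_{\theta_1}, z_{\theta_2} \in f^{-1}(\partial U') = \partial U_0$; take $U := U_0$. Otherwise, $f^{-1}(U')$ splits into two disjoint bounded Fatou components $V, -V$ interchanged by the deck transformation $z \mapsto -z$, each mapped conformally by $f$ onto $U'$. The sector $S$ maps homeomorphically onto $f(S)$ under $f$, and is adjacent to exactly one of $V, -V$ on its Julia-set side—say $V$, with $-S$ adjacent to $-V$ by symmetry. Under the boundary homeomorphism $f\colon \partial V \to \partial U'$, the arc of $\partial U'$ visible from $\overline{f(S)}$ lifts to an arc of $\partial V$ visible from $\overline S$, and this lifted arc contains both $z_{\theta_1}$ and $z_{\theta_2}$. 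So $U := V$ works.

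The main obstacle is the non-critical lifting case, where one must ensure both landing points lie on the boundary of the \emph{same} preimage component rather than splitting between $V$ and $-V$. The sector $S$ between the adjacent rays is the key canonical object: adjacency of $\rho_1, \rho_2$ prevents $S$ from being split by further preimage rays, so $S$ canonically picks out one of the two lifts, ensuring the boundary arc carrying both $z_{\theta_1}, z_{\theta_2}$ lifts coherently. A related subtle point is the strengthening of the inductive hypothesis to specify \emph{which} Fatou component $U'$ to take—at a cut point, multiple Fatou components may satisfy the bare conclusion of the lemma (for instance at $\alpha$, both $U_0$ and $U_{-1}$ qualify), but only the one adjacent to $f(S)$ lifts correctly.
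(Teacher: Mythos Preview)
Your proof is correct and follows essentially the same inductive strategy as the paper: push the adjacent pair down by $f$, invoke the hypothesis at level $n-1$, and lift back, splitting on whether the target component is $U_{-1}$. The one structural difference is that you strengthen the induction to single out the component $U'$ adjacent to the sector $f(S)$, whereas the paper avoids this by simply letting $V$ be whichever preimage component contains the landing point of $\rho_1$, and then extending the corresponding branch of $f^{-1}$ conformally from $U$ across the arc $\omega\subset\partial U$ into the region $R$ bounded by $\omega$, $f(\rho_1)$, $f(\rho_2)$; this forces $f(\rho_2)$ to lift to $\rho_2$, so $\rho_2$ lands on $\partial V$ as well. Both arguments hinge on the same geometric fact (the arc between $\theta_1,\theta_2$ has length at most $1/3<1/2$, so $f$ is univalent on the relevant region), but the paper's device of anchoring $V$ to $\rho_1$ and then analytically continuing is slightly slicker than carrying the sector through the induction. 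One small gap in your write-up: since you strengthen the hypothesis mid-proof, you should also verify it at $n=0$, i.e.\ note that for the sector $S_0$ one takes $U_0$ and for $S_\alpha$ one takes $U_{-1}$; your base case as written only names $U_0$.
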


\no
\emph{Proof.}
We apply induction. The statement is trivially true for $n=0$. Assuming that the statement is true for $n-1$, where $n\ge1$, let $\rho_1$ and $\rho_2$ be two adjacent external rays of $\mathcal R_n$. If we apply $f$ to $\rho_1$ and $\rho_2$, we obtain two external rays of $\mathcal R_{n-1}$ that are necessarily adjacent. Otherwise, we would apply $f^{-1}$ to the region in $U_\infty$ between $f(\rho_1)$ and $f(\rho_2)$ that contains a separating external ray to get a contradiction. Note that in the B\"ottcher coordinate of $U_\infty$ the arc on $\T$ that corresponds to the landing points of $\rho_1$ and $\rho_2$ and does not contain the landing points of other rays in $\mathcal R_n$ has angular length at most  $1/3<1/2$, and hence $f$ is conformal in the region of $U_\infty$ between $\rho_1$ and $\rho_2$ that does not contain other external rays of $\mathcal R_n$.
Therefore, $f(\rho_1)$ and $f(\rho_2)$ land on the boundary of the same bounded Fatou component $U$. If $U=U_{-1}$, then $\rho_1$ and $\rho_2$ must land on the boundary of $U_0$, because $U_0$ is the only preimage of $U_{-1}$ under $f$.
If $U\neq U_{-1}$, any branch of the map $f^{-1}$ restricted to $U$ is conformal onto another bounded Fatou component $V$, and there are exactly two such branches because $f$ has degree 2. Suppose $\rho_1$ lands on the boundary of $V$. The branch $f^{-1}$ that takes $U$ onto $V$ extends conformally across the arc $\omega$ on the boundary of $U$ between the landing points of $f(\rho_1)$ and $f(\rho_2)$ into the region $R$ in $\C\setminus \overline U$  that satisfies the following properties. The arc $\omega$ does not contain other landing points of external rays in $\mathcal R_{n-1}$, and the region $R$ is bounded by $\omega$, $f(\rho_1)$, and $f(\rho_2)$. The image of $f(\rho_2)$ under this branch of $f^{-1}$ is then necessarily the external ray $\rho_2$, and we are done.    
\qed

\medskip

For $n\in \N_0$, let:

\smallskip\noindent -- $D_n$ be the $n$-fold preimage $g^{-n} \{\pm 1/3\} \subset \T$ (consisting of  $2^{n+1}$ points);

\smallskip\noindent
--  $P_n$ be the tiling of the circle $\T$ by the points of $D_n$
      (comprising $2^{n+1}$ closed arcs $I_k\subset \T$);

\smallskip\noindent
--    $\Pi_n= \inv_\infty  (P_n)$ be the corresponding tiling of $\Ju$
     (comprising $2^{n+1}$ closed  Julia arcs  $J_k= \inv_\infty (I_k)\subset \Ju$
     with \emph{endpoints} at preimages of the fixed point $\alpha$ of \emph{level} $\leq n$).

\smallskip
Each set $J_k$ corresponds to a pair of adjacent external rays of $\mathcal R_n$, as follows. 
Let $\rho_1$ and $\rho_2$ be two adjacent external rays of $\mathcal R_n$ whose landing points are $z_1$ and $z_2$, respectively. 
There are two cases that need to be considered: either $z_1=z_2$ or $z_1\neq z_2$. 

\smallskip
\no
\emph{Case 1.}
If $z=z_1=z_2$, this point is the root of a unique bounded Fatou component $U$ unless $z=\alpha$, the fixed point of $f$. This follows from Lemma~\ref{L:Basepoint}. 
If $z=\alpha$, we choose $U=U_{-1}$.
The Julia arc $J_k$ of $\Pi_n$ that corresponds to $\rho_1, \rho_2$ is $\Ju_z$.

\smallskip
\no
\emph{Case 2.}
Now assume that $z_1\neq z_2$ and let $U$ be a bounded Fatou component of $f$ whose boundary contains $z_1$ and $z_2$.  Such $U$ exists by Lemma~\ref{L:Adj}.  
Let $\omega$ be the arc on $\dee U$ between $z_1$ and $z_2$ that contains no other landing points of external rays in $\mathcal R_n$. Let $R$ be the region in $\C\setminus\overline U$ that is bounded by $\omega$, $\rho_1$, and $\rho_2$. The Julia arc $J_k$  that corresponds to the pair $\rho_1, \rho_2$ is the closure of the intersection of $\Ju$ with the region $R$.     

\smallskip
It follows immediately from the definition that the map $f^n$ takes each Julia arc $J_k$ of $\Pi_n$ onto the closure of one of the two connected components of  $\Ju\setminus\{\alpha\}$: either $\Ju_0$ that contains the boundary of $U_0$ or $\Ju_\alpha$ that contains the boundary of $U_{-1}$. The first case occurs when the landing points $z_1$ and $z_2$ are the same. In this case the map $f^n$ is one to one. In the other case, i.e., $z_1\neq z_2$, the preimage of each point $z$ of $\Ju_{\alpha}$ under $f^n$ is a singleton except if  $z=\alpha$, when the preimage consists of two points.


\begin{lemma}\label{L:OuterDyadSubdiv}
The set $D_n,\ n\ge 0$, is the set of $2^{n+1}$ points on the unit circle such that the angular lengths of complimentary intervals alternate between $1/(3\cdot2^{n})$ and $2/(3\cdot2^{n})$. In particular, there exists a constant $L\ge1$ independent of $n$, and an orientation preserving  piecewise linear $L$-bi-Lipschitz map $\psi$ of $\T$ whose break points are points of $D_n$, and
such that $\psi(D_n)$ is a set of $2^{n+1}$ points on the unit circle such that all complementary intervals have equal lengths.  
\end{lemma}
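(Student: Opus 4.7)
The plan is to obtain an explicit arithmetic description of $D_n$, from which both assertions become routine. First, using $g^n(\theta) = 2^n\theta \bmod 1$ together with $D_0 = \{1/3,\,2/3\}$, I would show that a point $\theta \in \T$ lies in $D_n$ iff $2^n\theta \equiv \pm 1/3 \pmod 1$, equivalently iff $3\cdot 2^n\theta$ is an integer in $[1,\,3\cdot 2^n - 1]$ not divisible by $3$. Hence
\[
D_n = \left\{\frac{k}{3\cdot 2^n} : 1 \le k \le 3\cdot 2^n - 1,\ 3\nmid k\right\},
\]
a set of cardinality $3\cdot 2^n - 2^n = 2^{n+1}$, matching the expected count.

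Next, I would list the admissible indices $k$ in increasing order, obtaining the pattern $1,2,4,5,7,8,\ldots$ with consecutive differences alternating between $1$ and $2$; this is because in every block of three consecutive integers exactly one is a multiple of $3$. Dividing by $3\cdot 2^n$ shows that the angular gaps between consecutive points of $D_n$ alternate between $1/(3\cdot 2^n)$ and $2/(3\cdot 2^n)$, proving the first assertion.

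For the second assertion, I would define $\psi\co \T\to\T$ as the piecewise linear map whose break points are exactly the points of $D_n$, with slope $3/2$ on every short gap (length $1/(3\cdot 2^n)$) and slope $3/4$ on every long gap (length $2/(3\cdot 2^n)$). The $2^n$ short gaps then contribute a total image length of $2^n\cdot(3/2)\cdot 1/(3\cdot 2^n) = 1/2$, and the $2^n$ long gaps contribute another $2^n\cdot(3/4)\cdot 2/(3\cdot 2^n) = 1/2$. Hence $\psi$ descends to an orientation preserving homeomorphism of $\T$ carrying $D_n$ onto $2^{n+1}$ equally spaced points; since the slopes of $\psi$ and $\psi^{-1}$ lie in $\{3/2,\,3/4\}$ and $\{2/3,\,4/3\}$ respectively, $\psi$ is $(3/2)$-bi-Lipschitz with constant independent of $n$.

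There is no serious obstacle here. The only point requiring genuine care is checking that the two chosen slopes are consistent with $\psi$ actually closing up into a circle map, i.e., that the target arclengths sum to $1$, which is the computation above. Once this is verified, taking $L = 3/2$ works for every $n$.
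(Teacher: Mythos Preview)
Your proposal is correct. For the first assertion you take a slightly different route than the paper: the paper argues by induction on $n$, noting that applying $g$ to two adjacent complementary intervals at level $n$ doubles both their lengths while keeping them adjacent, and then appeals to the base case $n=0$; you instead give the explicit arithmetic description $D_n=\{k/(3\cdot 2^n): 3\nmid k\}$ and read off the alternating gap pattern directly from the residues mod $3$. Both arguments are short and elementary; yours has the minor advantage of yielding an explicit formula for the points of $D_n$, while the paper's induction highlights the dynamical origin of the pattern. For the second assertion your construction is identical to the paper's, with the same slopes $3/2$ and $3/4$ and the same constant $L=3/2$.
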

\no
{\emph{Proof.}}
Let $I$ and $J$ be two adjacent complementary intervals at level $n$. Application of the map $g(z)=z^2$ doubles the lengths of each one of them and keeps them adjacent. Because for $n=0$ the points $1/3$ and $2/3$ satisfy the desired property and form a 2-cycle under the dynamics of $g$, the first part of the claim follows from induction. The second part follows from the observation that in order to make all complementary intervals to have the same length, one needs to scale the complementary intervals of length $1/(3\cdot 2^n)$ by $3/2$ and the complementary intervals of length $2/(3\cdot 2^n)$ by $3/4$. In particular, $L=3/2$. 
\qed

\medskip

Let us finish this section with a description of  a \emph{pinched disk} topological model 
for the basilica.  Let $d$ be the diameter of $\D$ with one endpoint
at $e(1/3)$, and let $\D_\pm$ be the corresponding semi-disks. For the sake of definiteness we assume that $e(2/3)$ is contained in ${\D_-}$.
Let us connect the points  $e(1/3)$ and $e(2/3)$ of $D_0$
with the hyperbolic geodesic $\gamma^0\subset \overline\D$. 
The points of $D_1$ split into two symmetric pairs, one contained in $\overline{\D_+}$, 
the other contained  in $\overline{\D_-}$. 
Each of these pairs can be connected with a hyperbolic geodesic. 
Since $\{e(1/3),e(2/3)\}$ is a periodic cycle for the map $g$, one of these geodesics, namely the one contained in $\overline{\D_-}$, is the geodesic $\gamma^0$.
We denote the other geodesic by $\gamma^1$.  
The full preimage 
of $\di \gamma^1$ under the doubling map $g$ consists of two pairs of points in $D_2$,
 one contained in $\D_+$,  the other contained  in $\D_-$. 
 Connecting each of these pairs with a hyperbolic  geodesic, we obtain two new geodesics in $D_2\setminus D_1$, denoted by
 $\gamma^2_{+}$ and $\gamma^2_{-}$. 
By the same procedure,
for each geodesic $\gamma^2_\epsilon,\ \epsilon=\pm$, 
we obtain two new geodesics (``pullbacks'' of  $\gamma^2_\epsilon$), 
one in $\D_+$ and the other contained  in $\D_-$, which we denote by $\gamma^3_{(\eps_1\, \eps_2)},\ \eps_i\in \pm,\ i=1,2$.
Proceeding this way, for each level $n\in \N$, we can construct $2^{n-1}$ hyperbolic geodesics
$\gamma_{\bar \eps}^n$, $\bar\eps = (\eps_1\dots \eps_{n-1})$, $\eps_i\in \pm$,  
paring the points of $D_n\setminus D_{n-1}$.   It is easy to show that all  geodesics $\gamma_{\bar \eps}^n$ are disjoint.  
One can also show that together they form a closed subset $Q$ of $\overline \D$.
This subset (endowed with a partition into the geodesics $\gamma_{\bar \eps}^n$) is called the \emph{basilica lamination}; see Figure~\ref{F:BL}.  

\begin{figure}
[htbp]
\begin{center}
\includegraphics[height=60mm]{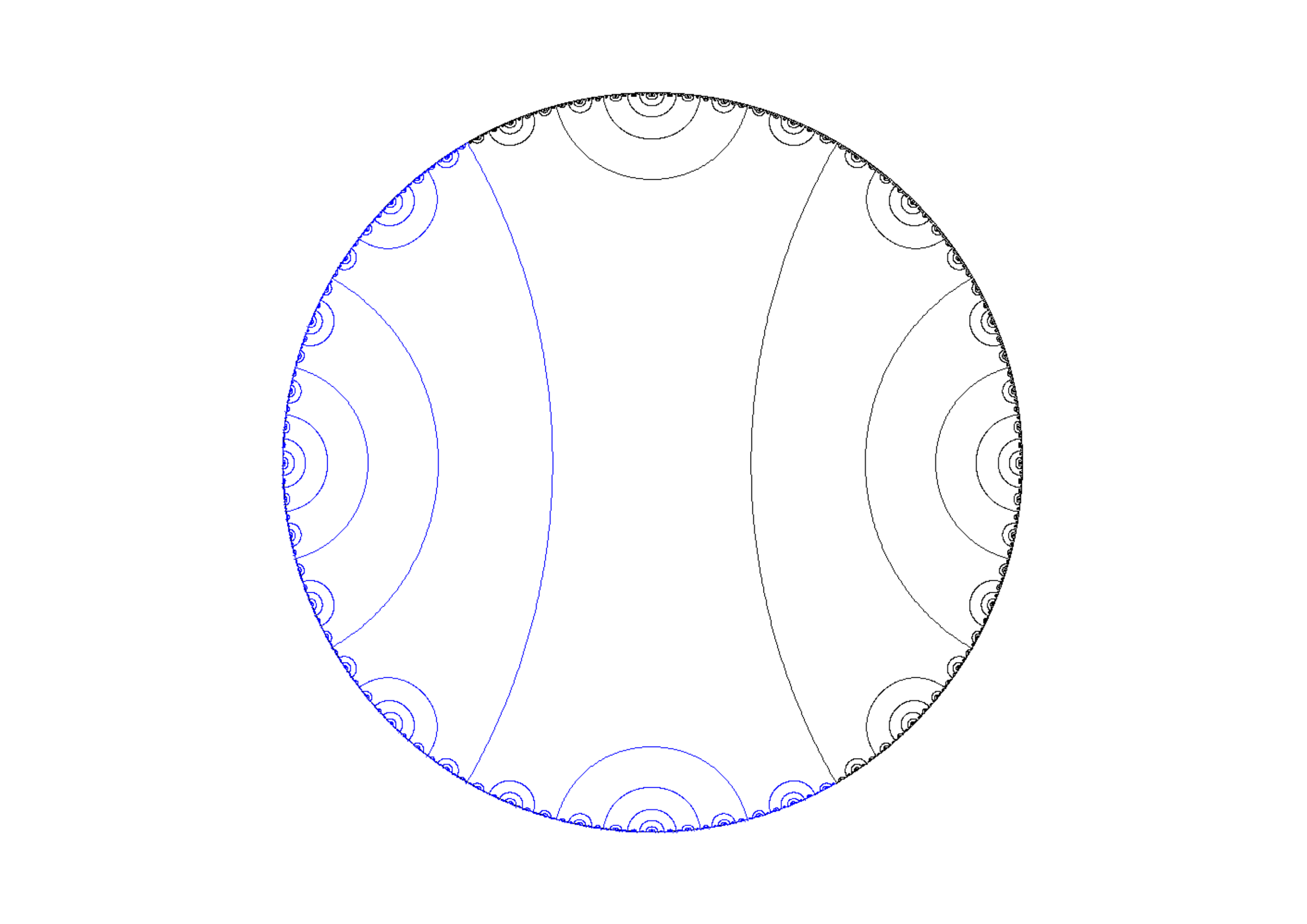}
\caption[]{
The basilica lamination\protect\footnote{Source: https://commons.wikimedia.org/wiki/File:Basilica\_lamination.png}.
}
\label{F:BL}
\end{center}
\end{figure}

Let us consider an equivalence relation $\underset{f}{\sim}$ on $\C$ whose classes are either the geodesics
of the basilica lamination or single points.

\begin{theorem}{\rm (see, e.g., \cite[Theorem~24.33]{L})}\label{model thm} 
The quotient of $(\C, \overline\D )$ by the equivalence relation $\underset{f}{\sim}$
is homeomorphic to $(\C, \KK)$. Moreover, this homeomorphism coincides with the
inverse B\"ottcher coordinate $\psi_\infty$ on $\C\sm \overline\D$ and sends the geodesics
$\gamma^n_{\bar \eps}$ to the corresponding  level $n$ preimages of the $\alpha$-fixed point.   
\end{theorem}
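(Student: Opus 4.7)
The plan is to build the homeomorphism by first extending the B\"ottcher map $\inv_\infty$ continuously to the boundary and then showing that its boundary identifications are recorded precisely by the basilica lamination. Since $f$ is postcritically finite hyperbolic, the Julia set $\Ju$ is locally connected; hence $\inv_\infty\: \C\sm \overline{\D}\to U_\infty$ extends continuously to a map $\bar\inv_\infty\: \C\sm \D\to \C\sm \inter\KK$, whose restriction to $\T$ is the Carath\'eodory loop $\gamma\: \T\to \Ju$. The map $\bar\inv_\infty$ is a homeomorphism on $\C\sm \overline\D$ and is semiconjugating $g$ to $f$ on $\T$. Define an equivalence relation $\sim_\gamma$ on $\T$ by $\theta_1\sim_\gamma \theta_2$ iff $\gamma(\theta_1)=\gamma(\theta_2)$. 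The goal is then to identify $\sim_\gamma$ with the relation cut out by the endpoints of the geodesics $\gamma^n_{\bar\eps}$ and then to pass to the full quotient on $\overline\D$.

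First I would verify the ``base'' identifications by hand: the rays $\rho^{\pm 1/3}$ land at the $\alpha$-fixed point, as already discussed in the text, so $1/3 \sim_\gamma 2/3$, which is recorded by the geodesic $\gamma^0$. Since the relation $\sim_\gamma$ is invariant under the doubling map (because $\gamma$ semiconjugates $g$ to $f$), every identification produced by pulling back $\gamma^0$ through a finite cascade of $g^{-1}$ appears in $\sim_\gamma$. The combinatorial preimage tree of $\{\pm 1/3\}$ is exactly the set $\bigcup_n D_n$, and the pairing of points within $D_n\sm D_{n-1}$ by geodesics $\gamma^n_{\bar\eps}$ is the one produced by the pullback procedure. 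The work done in Lemmas~\ref{L:Basepoint} and~\ref{L:Adj} shows conversely that two \emph{adjacent} rays in $\mathcal R_n$ land together only when their angles are the endpoints of some $\gamma^m_{\bar\eps}$ with $m\le n$: adjacent rays either meet at a cut-point preimage of $\alpha$ (Case~1) or bound a sector containing a single bounded Fatou component (Case~2). Taking the union over $n$ and using density of $\bigcup_n D_n$ in $\T$ and continuity of $\gamma$, one concludes that any nontrivial identification $\theta_1\sim_\gamma \theta_2$ is obtained as a limit of identifications at $\alpha$-preimages, forcing $\gamma(\theta_1)=\gamma(\theta_2)$ to be an $\alpha$-preimage, which in turn forces $\{\theta_1,\theta_2\}$ to be the endpoints of some $\gamma^n_{\bar\eps}$.

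Next I would extend the relation from $\T$ to $\overline\D$ as prescribed: each class is either the closed geodesic $\gamma^n_{\bar\eps}$ (together with its two endpoints) or a singleton, and by the previous step this is exactly $\underset{f}{\sim}$ restricted to $\overline\D$. The resulting quotient $\overline\D/\underset{f}{\sim}$ is Hausdorff (the lamination $Q$ is closed and the relation is upper semicontinuous), compact, and there is a natural map $\overline\D/\underset{f}{\sim}\to \KK$ sending the quotient class of $\theta\in\T$ to $\gamma(\theta)$ and each complementary component of $Q$ to the closure of a bounded Fatou component via a Riemann map respecting the B\"ottcher coordinate. This map is a continuous bijection between compact Hausdorff spaces, hence a homeomorphism. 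Finally, gluing this homeomorphism on $\overline\D/\underset{f}{\sim}$ with $\inv_\infty$ on $\C\sm \overline\D$ along their common boundary (where they agree by construction with $\gamma$) yields the desired homeomorphism $(\C,\overline\D)/\underset{f}{\sim}\to (\C,\KK)$.

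The main technical obstacle is the converse direction in the analysis of $\sim_\gamma$: ruling out ``extra'' identifications by $\gamma$ that are not already encoded by the lamination. This is where one really uses that $f$ is hyperbolic and postcritically finite---so that each bounded Fatou component is a Jordan disk whose boundary is a topological circle on which the B\"ottcher coordinate is a genuine homeomorphism, and so that cut-points of $\Ju$ are precisely iterated preimages of $\alpha$. With those ingredients in place (all of which are recorded in the preceding sections, or in \cite{DH,L}), the combinatorial induction on $n$ via Lemma~\ref{L:Adj} carries the argument through.
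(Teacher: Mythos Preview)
The paper does not supply a proof of this theorem; it is quoted as a known result with the citation to \cite[Theorem~24.33]{L}, so there is no in-paper argument to compare your proposal against.

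That said, your outline follows the standard route (Carath\'eodory extension of $\psi_\infty$, identification of the fibers of the Carath\'eodory loop with the lamination, then a Moore-type quotient), and is broadly sound. Two places would need tightening in a full write-up. First, the ``converse'' step ruling out extra identifications is better done directly than by a limit argument: if $\gamma(\theta_1)=\gamma(\theta_2)$ with $\theta_1\neq\theta_2$, the common landing point disconnects $\Ju$, hence is a cut-point; for the basilica one then argues (e.g., via the orbit of a cut-point and the structure of the immediate basin) that every cut-point is an iterated preimage of $\alpha$, which forces $\{\theta_1,\theta_2\}=\partial\gamma^n_{\bar\eps}$ for some $n$. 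Your appeal to Lemmas~\ref{L:Basepoint} and~\ref{L:Adj} does not quite give this, since those lemmas concern adjacent rays in $\mathcal R_n$ rather than arbitrary pairs of co-landing rays. Second, the passage from $\T/\!\sim_\gamma$ to $\overline\D/\underset{f}{\sim}$ and on to $\KK$ needs the bijection between gaps of the lamination and bounded Fatou components, together with matching boundary values; this is where the references \cite{D,Th,L} do genuine work, and your sentence about ``a Riemann map respecting the B\"ottcher coordinate'' is gesturing at, but not yet supplying, that step.
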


The equivalence relation  $\underset{f}{\sim}$ induces an equivalence relation
on the circle $\T$ that pairs points of $D_\infty:= \bigcup D_n$, where
$\theta_1 \underset{f}{\sim} \theta_2$ if and only if the rays
$\rho^{\theta_1}$ and $\rho^{\theta_2}$ land at the same point of $\Ju $.
We will refer to it as the \emph{basilica lamination of} $\T$.
(This ``lamination'' has zero-dimensional leaves.)
Obviously, the basilica lamination of $\T$  contains the same amount of
information as the basilica geodesic lamination of $\overline\D$.

For the sake of reference, let us state a simple lemma:

\begin{lemma}\label{homeos}
Any homeomorphism $\xi\: U_\infty\cup \Ju \ra U_\infty\cup \Ju$ lifts (via the extended inverse B\"ottcher coordinate $\psi_\infty\: \C\sm \D \ra U_\infty\cup \Ju$) 
to a homeomorphism $h\: \C\sm \D \ra \C\sm \D$ that preserves the basilica lamination  of $\T$, and whose restriction $\xi_\infty$ to $\T$ satisfies
\begin{equation}\label{E:SC}
\inv_\infty  \circ\xi_\infty=\xi\circ\inv_\infty .
\end{equation}
Conversely, any  homeomorphism $h\: \C\sm \D \ra \C\sm \D$ that preserves the basilica lamination of $\T$
descends to a homeomorphism  $\xi\: U_\infty\cup \Ju \ra U_\infty\cup \Ju $.   
\end{lemma}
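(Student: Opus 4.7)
\emph{Plan of Proof.} Theorem~\ref{model thm}, restricted to the exterior of $\overline{\D}$, asserts that $\psi_\infty\: \C\sm\D \to U_\infty\cup\Ju$ is a continuous surjection that is a conformal isomorphism on $\C\sm\overline{\D}$ and whose fibers on $\T$ are precisely the basilica-lamination classes (singletons off $D_\infty$, pairs on $D_\infty$); equivalently, $\psi_\infty$ factors as a homeomorphism $(\C\sm\D)/\underset{f}{\sim}\to U_\infty\cup\Ju$. Any homeomorphism $\xi$ of $U_\infty\cup\Ju$ preserves the decomposition into the interior $U_\infty$ and its boundary $\Ju$, and consequently sends the set of global cut-points $C=\psi_\infty(D_\infty)\subset\Ju$ of $\Ju$ to itself.

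\emph{Forward direction.} On $\C\sm\overline{\D}$ set $h:=\psi_\infty^{-1}\circ\xi\circ\psi_\infty$, a homeomorphism onto itself. For $\theta\in\T\sm D_\infty$, the point $\xi(\psi_\infty(\theta))$ is not a cut-point, so $\psi_\infty^{-1}(\xi(\psi_\infty(\theta)))\cap\T$ is a singleton; define this to be $\xi_\infty(\theta)$. The resulting map $\xi_\infty\: \T\sm D_\infty \to \T\sm D_\infty$ is a bijection (with inverse built from $\xi^{-1}$), is continuous, and is locally monotonic in the cyclic order of $\T$; it therefore extends uniquely to a monotone homeomorphism $\xi_\infty\: \T\to\T$, with values on $D_\infty$ given by the common one-sided limits. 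Lamination preservation is automatic, since $\psi_\infty(\theta)=\psi_\infty(\theta^*)$ forces $\psi_\infty(\xi_\infty(\theta))=\psi_\infty(\xi_\infty(\theta^*))$, so partner pairs map to partner pairs.

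\emph{Continuity of $h$ on $\C\sm\D$.} Set $h|_\T := \xi_\infty$. At interior points of $\C\sm\overline{\D}$, $h$ is continuous as a composition of continuous maps. At a boundary point $\zeta\in\T$, for any $z_n\to\zeta$ in $\C\sm\D$ the identity $\psi_\infty\circ h(z_n)=\xi\circ\psi_\infty(z_n)$ yields $\psi_\infty(h(z_n))\to\psi_\infty(\xi_\infty(\zeta))$, so accumulation points of $\{h(z_n)\}$ lie in a fiber of size at most two. A standard prime-end (Carath\'eodory) argument, combined with the monotonicity of $\xi_\infty$ on the dense set $\T\sm D_\infty$, concentrates this accumulation at the single point $\xi_\infty(\zeta)$; hence $h$ is continuous. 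It is a homeomorphism because the same construction applied to $\xi^{-1}$ furnishes a two-sided continuous inverse.

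\emph{Converse and main obstacle.} Given a lamination-preserving homeomorphism $h\: \C\sm\D\to\C\sm\D$, the composition $\psi_\infty\circ h$ is constant on each lamination class and hence descends through the quotient map $\psi_\infty$ to a continuous map $\xi\: U_\infty\cup\Ju\to U_\infty\cup\Ju$ satisfying~\eqref{E:SC}; applying the same construction to $h^{-1}$ supplies a continuous two-sided inverse. \emph{The main obstacle} is the boundary extension in the forward direction: one must verify that the one-sided limits defining $\xi_\infty$ at each $\theta\in D_\infty$ agree, i.e., that the two sides of $\theta$ in $\T$ correspond (via $\psi_\infty$) to the two local branches of $\Ju$ at the cut-point $\psi_\infty(\theta)$, which map under $\xi$ to the two local branches at the image cut-point, in turn corresponding in $\T$ to the two sides of the \emph{same} preimage and not to opposite preimages.
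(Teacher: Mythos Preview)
Your outline reaches the right conclusion but takes an unnecessarily circuitous route and leaves the step you yourself call the ``main obstacle'' unresolved. You construct $\xi_\infty$ pointwise on the dense set $\T\sm D_\infty$, assert local monotonicity (without proof --- note that $\T\sm D_\infty$ is totally disconnected, so this needs care), extend by one-sided limits, and then must check at each $\theta\in D_\infty$ that both sides land on the \emph{same} partner of the image pair. You flag this as the crux but do not actually carry it out; your invocation of prime ends in the continuity paragraph already presupposes $\xi_\infty(\zeta)$ is defined, so the argument as written is circular at exactly this point.

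The paper's proof dissolves this obstacle by using prime ends \emph{as the definition} rather than as a patch. The observation is simply that $\C\sm\D$ is the Carath\'eodory (prime-end) compactification $\cl^C U_\infty$ of $U_\infty$, and prime ends are a purely topological invariant of the domain: any self-homeomorphism of $U_\infty$ (which $\xi$ restricts to) automatically induces a self-homeomorphism of $\cl^C U_\infty$. This gives the continuous extension of $h=\phi_\infty\circ\xi\circ\psi_\infty$ to $\T$ in one stroke, with no need to discuss one-sided limits, monotonicity, or which partner of a laminated pair is hit. Lamination preservation then follows immediately, since laminated pairs are exactly pairs of prime ends with the same impression, and $\xi$ respects impressions because it is defined on $\Ju$. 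Your ``main obstacle'' is precisely the statement that a homeomorphism of $U_\infty$ cannot interchange the two accesses at a cut-point while fixing the access --- and that is exactly what the prime-end machinery encodes. The converse direction in your proposal matches the paper's and is fine.
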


\begin{proof}
  On  $\C\sm \overline\D$, the map $h$ is defined as $\phi_\infty \circ \xi\circ    \psi_\infty$. 
By the Carath\'eodory Theory, the closure $\C\sm \D$ is naturally homeomorphic
to the prime end compactification $\cl^C U_\infty$  of $U_\infty$.  The definition of prime ends easily implies that 
any homeomorphism $\xi\: U_\infty\cup \Ju \ra U_\infty\cup \Ju $ induces a homeomorphism of $\cl^C U_\infty$.
It follows that $h$ extends continuously to the unit circle $\T$. 
This extension respects the basilica lamination of $\T$ since $\xi$ respects the
landing pairing between the rays. Equation~\ref{E:SC} follows by
equating the boundary values of $\psi_\infty\circ h$ and
$\xi\circ\inv_\infty $.

Conversely,  any homeomorphism   $h\: \C\sm \D \ra \C\sm \D$ that preserves the basilica lamination on $\T$
descends to a homeomorphism of the quotient $  (\C\sm \D)/  \underset{f}{\sim} $. By Theorem \ref{model thm},
the latter is naturally  homeomorphic to $U_\infty \cup \Ju$, providing a desired homeomorphism $\xi$.     
\end{proof}

See \cite{Th,D,L} for a detailed discussion of geodesic laminations and pinched models. 

\section{Local properties of quasisymmetries of $\Ju$}\label{S:Qsom}
\no
Before we proceed, we briefly recall basic definitions and facts on quasiconformal and quasisymmetric maps. For more background one can consult~\cite{Va, AIM, He}. 
  
 A homeomorphism $f \: U \to \tilde U$ between open regions in the plane $\C$ or the Riemann sphere $\hC$ is called \emph{quasiconformal} if $f$ is   in the Sobolev space $W_{\rm loc}^{1,2}$
and if there exists a constant $K\ge 1$ such that the (formal) Jacobi   matrix $Df$
satisfies
$$
||Df(z)||^2\leq K\det(Df(z))
$$
for almost every $z \in U$.  In this case we say $f$ is $K$-\emph{quasiconformal}; the constant $K$ is called \emph{dilatation} of $f$.
The condition  $f\in W_{\rm loc}^{1,2}$ means that the first distributional partial derivatives of $f$ are locally in $L^2$. A quasiconformal map is necessarily orientation preserving.

If $(X, d_X)$ and $(Y,d_Y)$ are metric spaces, a homeomorphism $f\: X\to Y$  is called {\em quasisymmetric} or a \emph{quasisymmetry} if there exists a homeomorphism $\eta\:
[0,\infty)\to[0,\infty)$ such that 
$$
\frac{d_Y(f(u),f(v))}{d_Y(f(u),f(w))}\leq\eta
\bigg(\frac{d_X(u,v)}{d_X(u,w)}\bigg),
$$
for every triple of distinct points $u,v,w\in X$. 
If we want to emphasize the dependence on the \emph{distortion function} $\eta$, we say that $f$ is $\eta$-\emph{quasisymmetric}, or $\eta$-\emph{quasisymmetry}.

Suppose  $U$ and $V$ are  subregions of $\hC$. Then 
every orientation-preserving $\eta$-quasisymmetric homeomorphism $f\: U\ra V$ is $K$-qua\-si\-con\-for\-mal with $K$ that depends only on $\eta$. Conversely, every properly normalized quasiconformal homeomorphism $f\: U\ra V$ is {\em locally quasisymmetric}, i.e., 
for every compact set $M\subset U$,  
the restriction $f|_M\: M\ra f(M)$ is a quasisymmetry, quantitatively, i.e., $\eta$ depends only on $K$ and the relative distance between $M$ and $\dee U$; see \cite[p.~58, Theorem 3.4.1 and p.~71, Theorem~3.6.2]{AIM} and~\cite[Theorem~11.14]{He}. This is referred to as the \emph{egg yolk principle}.  

If we have a family of maps with the same 
dilatation or distortion function, then we say that the family is {\em uniform}.
E.g., a family of homeomorphisms is \emph{uniformly quasisymmetric} if there exists a homeomorphism $\eta\:[0,\infty)\to[0,\infty)$ such that each map from the family  is $\eta$-quasisymmetric.
Inverses and compositions of quasiconformal or quasisymmetric maps are quantitatively quasiconformal or quasisymmetric, respectively.

According to the Ahlfors--Beurling theorem~\cite{BA}, each orientation preserving  $\eta$-quasisymmetric map $h\:\T\to\T$ has a $K$-quasiconformal extension $H$ to the whole complex plane, where $K$ depends only on $\eta$.
Conversely, the homeomorphic extension $h$ of each $K$-quasiconformal homeomorphism $H\:\D\to\D$ or $H\:\C\setminus\overline{\D}\to\C\setminus\overline{\D}$ to $\T$
is $\eta$-qua\-si\-sym\-met\-ric for some $\eta$ that depends only on $K$.  

Let $\xi$ be an orientation preserving homeomorphism of $\C$ that leaves $\Ju$ invariant. 
Let $U, V$ be bounded Fatou components of $f$ such that  $\xi\: U\to V$. Then the homeomorphism $\xi_{U,V}=\phi_V\circ \xi\circ \psi_U\:\T\to\T$, where $\psi_U=\phi_U^{-1}$,  preserves the set of dyadic points.  Indeed, this follows from the fact that $\xi$ preserves the set of global cut-points of $\Ju$. 
By Lemma~\ref{homeos}, the global homeomorphism $\xi$ also induces a homeomorphism $\xi_\infty$ of $\T$ that satisfies  
$\psi_\infty\circ\xi_\infty=\xi\circ\psi_\infty$.
For the same reason as above, the induced map $\xi_\infty$  preserves the set $D_\infty=\cup_{n=0}^\infty D_n$, where $D_n$ is the set of all preimages of $\{1/3,2/3\}$ under $g^n$, with $g(z)=z^2$. 

Let $\xi$ be a topologically extendable $\eta$-quasisymmetric self-map of $\Ju$. In what follows, we also denote by $\xi$ its homeomorphic extension to all of $\C$. Let $U$ and $V$ be bounded Fatou components of $f$ such that $\xi(U)=V$. In this case the maps $\xi_{U,V}$, where $U$ runs over all bounded Fatou components, are uniform quasisymmetries, i.e, they are $\eta'$-quasisymmetries with $\eta'$ that depends only on $\eta$.
This follows from an elementary fact that all the bounded Fatou components are uniform quasidiscs, which, in turn, is a consequence of the hyperbolicity of $f$.  
 In particular, each $\xi_{U,V}$ has a $K$-quasiconformal extension to $\C$, where $K$ depends only on $\eta$. The following lemma shows that the same holds for the map $\xi_\infty$.

\begin{lemma}\label{L:EmbQS}
Let $\xi$ be a topologically extendable $\eta$-quasisymmetric map of
$\Ju$. Let $\xi_\infty$ be the induced map on the unit circle $\T$
that satisfies the semi-conjugation 
$
\inv_\infty \circ\xi_\infty=\xi\circ\inv_\infty.
$
Then $\xi_\infty$ is an $\eta'$-quasisymmetric homeomorphism of $\T$ with $\eta'$ that depends only on $\eta$.  In particular, $\xi_\infty$ has a $K$-quasiconformal extension to $\C$, where $K$ depends only on $\eta$.
\end{lemma}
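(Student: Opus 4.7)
The plan is to upgrade $\xi$ to a $K(\eta)$-quasiconformal self-homeomorphism $\tilde\xi$ of $\C$ extending $\xi|_\Ju$, then conjugate by the conformal B\"ottcher map $\psi_\infty$ on $U_\infty$ and invoke the converse of the Ahlfors--Beurling theorem on the resulting self-map of $\C\sm\overline\D$.

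First I would use the topological extension of $\xi$ (also denoted $\xi$) to $\C$, which preserves the Fatou decomposition: $\xi(U_\infty)=U_\infty$ and $\xi$ permutes the bounded Fatou components. On each bounded Fatou component $U$ with $V=\xi(U)$, the map $\xi_{U,V}\co\T\to\T$ is uniformly $\eta'(\eta)$-quasisymmetric as established in the paragraph preceding the lemma, so Ahlfors--Beurling produces a $K(\eta)$-QC self-map of $\C$ extending $\xi_{U,V}$; conjugating by the B\"ottcher coordinates $\phi_U,\phi_V$ yields a $K(\eta)$-QC extension of $\xi|_{\overline U}$ to $\overline U$.

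The main step is a $K(\eta)$-QC extension of $\xi$ to $U_\infty$. Since $\dee U_\infty=\Ju$ is not a quasicircle (it has a dense set of pinch points at iterated preimages of $\alpha$), Ahlfors--Beurling cannot be applied directly on $U_\infty$. I would instead decompose $U_\infty$ into curvilinear tiles using the external rays landing at $f^{-\infty}(\alpha)$ together with a nested family of equipotentials of $\psi_\infty$; each tile is a uniform quasidisk (by hyperbolicity of $f$) bounded by external ray segments, equipotential arcs, and boundary arcs of bounded Fatou components. Since $\xi$ preserves the cut-point set $f^{-\infty}(\alpha)$, it permutes these tiles combinatorially, and on each tile one constructs a $K(\eta)$-QC matching map via normalized Riemann mappings; the boundary data combine the already-built extensions on bounded Fatou components with the uniformly $\eta'(\eta)$-quasisymmetric action of $\xi$ on the $\Ju$-arcs of the tile boundary.

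Pasting these across $\Ju$ (a Lebesgue null set by hyperbolicity of $f$) yields a global $K(\eta)$-QC map $\tilde\xi\co\C\to\C$. The conjugate $\hat\xi:=\phi_\infty\circ\tilde\xi\circ\psi_\infty$ is $K(\eta)$-QC on $\C\sm\overline\D$ (as $\phi_\infty$ and $\psi_\infty$ are conformal), and its continuous extension to $\T$ equals $\xi_\infty$ by Lemma~\ref{homeos}. The converse of Ahlfors--Beurling then gives the desired $\eta'$-quasisymmetry of $\xi_\infty$ with $\eta'=\eta'(\eta)$; the ``in particular'' statement follows from a further application of Ahlfors--Beurling. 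The hard part will be the tile-by-tile QC extension on $U_\infty$: one must establish uniform geometric bounds on the tile partition across all combinatorial levels (using the expansion of $f$ on $\Ju$) and organize the tile-matching maps so the resulting dilatation depends only on $\eta$.
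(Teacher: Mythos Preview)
Your plan reverses the logical order the paper uses, and the reversal creates a circularity that I do not see how to break.

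The paper does \emph{not} first build a controlled QC extension of $\xi$ to $U_\infty$; it proves the quasisymmetry of $\xi_\infty$ directly on $\T$ and only \emph{later} (in the proof of Lemma~\ref{L:ConfElev}) uses that to get the QC extension to $U_\infty$. Concretely, the paper checks the weak-QS condition on $\T$: for adjacent arcs $I,J\subset\T$ of equal length, $\diam(\xi_\infty(I))\asymp\diam(\xi_\infty(J))$ with constants depending only on $\eta$. The mechanism is a two-way diameter comparison between circle arcs $I\subset\T$ and their Julia images $I'=\psi_\infty(I)$: using $g^n$ on $\T$ (resp.\ $f^n$ on $\Ju$) to blow small adjacent arcs up to definite size, and the egg yolk principle for the relevant inverse branch to carry comparability back down. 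This transfers the $\eta$-QS of $\xi$ on $\Ju$ to the weak-QS of $\xi_\infty$ on $\T$, with no extension to $U_\infty$ needed.

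In your approach, the ``target tiles'' in $U_\infty$ are the problem. A source tile at level $n$ has Julia boundary $J_k$ with endpoints $z_1,z_2\in f^{-n}(\alpha)$; the natural target is the region in $U_\infty$ bounded by the rays landing at $\xi(z_1),\xi(z_2)$ and the arc $\xi(J_k)$. But $\xi(z_1),\xi(z_2)$ can be cut points of arbitrary and unrelated levels, so the targets do not form a partition of $U_\infty$ at any fixed level, nor is there any a priori reason they are \emph{uniform} quasidisks. For the target tile to be a uniform quasidisk (equivalently, for your Riemann-map matching to produce a map with dilatation depending only on $\eta$), you need the angular length of the circle arc $\xi_\infty(I_k)$ to be comparable to $\diam(\xi(J_k))$ in a way controlled by $\eta$ alone. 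That comparability is exactly the content of the lemma. So the ``hard part'' you flag at the end is not a technical detail to be filled in---it is the whole statement, and your outline gives no independent mechanism to obtain it.
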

\no
\emph{Proof.}
It is enough to show that there exists $C>0$ that depends only on $\eta$, such that  if $I$ and $J$ are two adjacent non-overlapping arcs on $\T$ that have the same lengths, then 
$$
{\rm diam}(\xi_\infty(I))/C\leq{\rm diam}(\xi_\infty(J))\leq C {\rm diam}(\xi_\infty(I)).
$$

To prove this, we first show that there exist constants $C_1, C_1'>0$, such that if $I$ and $J$ are two adjacent non-overlapping arcs on $\T$, then 
$$
{\rm diam}(I)/C_1\leq{\rm diam}(J)\leq C_1{\rm diam}(I)
$$ 
if and only if
$$
{\rm diam}(I')/C_1'\leq{\rm diam}(J')\leq C_1'{\rm diam}(I'),
$$ 
where for an interval $I$ in $\T$ we denote by $I'$ the corresponding Julia arc, i.e., $I'=\psi_\infty(I)$; we use the same convention in the rest of this proof. 
In this statement, for the necessary part $C_1$ is given and $C_1'$ depends only on $C_1$, and for the sufficiency part it is the other way around. 

Indeed,
since $\psi_\infty$ does not collapse arcs of $\T$ to points, it follows that if $I$ is such an arc with ${\rm diam}(I)\geq \epsilon>0$, then there exists $\delta>0$ that depends only on $\epsilon$, such that 
$
{\rm diam}(I')\geq\delta.
$
Conversely, the uniform continuity of $\psi_\infty$ implies that if $\delta>0$ is given such that  
${\rm diam}(I')\geq\delta$, then there exists $\epsilon>0$ that depends only on $\delta$, with ${\rm diam}(I)\geq \epsilon$.

Now, if $
{\rm diam}(I)/C_1\leq{\rm diam}(J)\leq C_1{\rm diam}(I)
$, then  there exists a  constant $\epsilon_0>0$ that depends only on $C_1$, such that for some $n\in \N_0$ we have  
\begin{equation}\label{E:LargeArcs}
{\rm diam}(I_n), {\rm diam}(J_n)\geq\epsilon_0,
\end{equation}
where $I_n=g^n(I)$ and $J_n=g^n(J)$.
From the previous paragraph it follows that~\eqref{E:LargeArcs} holds if and only if there exists $\delta_0>0$ such that
$$
{\rm diam}(I_n'), {\rm diam}(J_n')\geq\delta_0.
$$ 
Here, for the ``if" part, $\epsilon_0$ depends only on $\delta_0$, and for the ``only if" part  $\delta_0$ depends only on $\epsilon_0$. Thus, there exist constants $C_2, C_2'>0$ that depend only on $\epsilon_0$ (or only on $\delta_0$), such that 
$$
{\rm diam}(I_n)/C_2\leq{\rm diam}(J_n)\leq C_2{\rm diam}(I_n)
$$ 
and
$$
{\rm diam}(I_n')/C_2'\leq{\rm diam}(J_n')\leq C_2'{\rm diam}(I_n').
$$ 
Note that $I_n'=f^n(I')$ and $J_n'=f^n(J')$.
We may and will assume that $n$ is chosen not too large, so that $I_n'$ and $J_n'$ are properly contained in an open set where the appropriate branch of $f^{-n}$ is well-defined and conformal.  The egg yolk principle now implies that there exists a constant $C_1'>0$ that depends only on $C_2, C_2'$, and hence only on $C_1$, such that 
$$
{\rm diam}(I')/C_1'\leq{\rm diam}(J')\leq C_1'{\rm diam}(I').
$$

The converse implication, namely that 
the last inequalities imply
$$
{\rm diam}(I)/C_1\leq{\rm diam}(J)\leq C_1{\rm diam}(I),
$$
for some $C_1>0$ that depends only on $C_1'$, follows the same lines with the egg yolk principle applied to $f^n$ rather than to $f^{-n}$. 

We are now ready to finish the proof. If $I$ and $J$ are two adjacent non-overlapping arcs of $\T$ that have the same lengths, then for some absolute constant $C_1'>0$ we have 
$$
{\rm diam}(I')/C_1'\leq{\rm diam}(J')\leq C_1'{\rm diam}(I').
$$ 
Since $\xi$ is $\eta$-quasisymmetric, there exists a constant $C_2'>0$ that depends only on $\eta$ and $C_1'$, such that 
$$
{\rm diam}(\xi(I'))/C_2'\leq{\rm diam}(\xi(J'))\leq C_2'{\rm diam}(\xi(I')).
$$
But, for each interval $I$ in $\T$, it follows from 
$\psi_\infty\circ\xi_\infty=\xi\circ\psi_\infty$ that $\xi(I')=(\xi_\infty(I))'$, and so
$$
{\rm diam}((\xi_\infty(I))')/C_2'\leq{\rm diam}((\xi_\infty(J))')\leq C_2'{\rm diam}((\xi_\infty(I))').
$$
Now we apply the above claim to conclude that there exists a constant $C>0$ that depends only on $C_2'$ with
$$
{\rm diam}(\xi_\infty(I))/C\leq{\rm diam}(\xi_\infty(J))\leq C{\rm diam}(\xi_\infty(I)),
$$
and the lemma follows.
\qed

\medskip

\section{Thompson group action on $\Ju$}\label{S:TG}

\subsubsection{Thompson groups} 
\no
The \emph{Thompson group} $F$ is a group of orientation preserving piecewise linear homeomorphisms of the closed interval $[0,1]$ whose break points, i.e., points of non-differentiability, are dyadic points and such that on intervals of linearity the slopes are integer powers of 2. See~\cite{CFP} for background on the group $F$ as well as the Thompson group $T$ of the unit circle, defined below. It follows immediately that the elements of $F$ preserve the set of dyadic points.
The  group $F$ is  generated by
$$
A(t)=\begin{cases}
t/2,\quad & 0\leq t\leq 1/2,\\
t-1/4,\quad & 1/2\leq t\leq 3/4,\\
2t-1,\quad & 3/4\leq t\leq 1,
\end{cases}
$$
and
$$
B(t)=\begin{cases}
t,\quad & 0\leq t\leq 1/2,\\
t/2+1/4,\quad & 1/2\leq t\leq 3/4,\\
t-1/8,\quad & 3/4\leq t\leq 7/8,\\
2t-1,\quad &7/8\leq t\leq 1.
\end{cases}
$$
\begin{figure}
[htbp]
\begin{center}
\includegraphics[height=60mm]{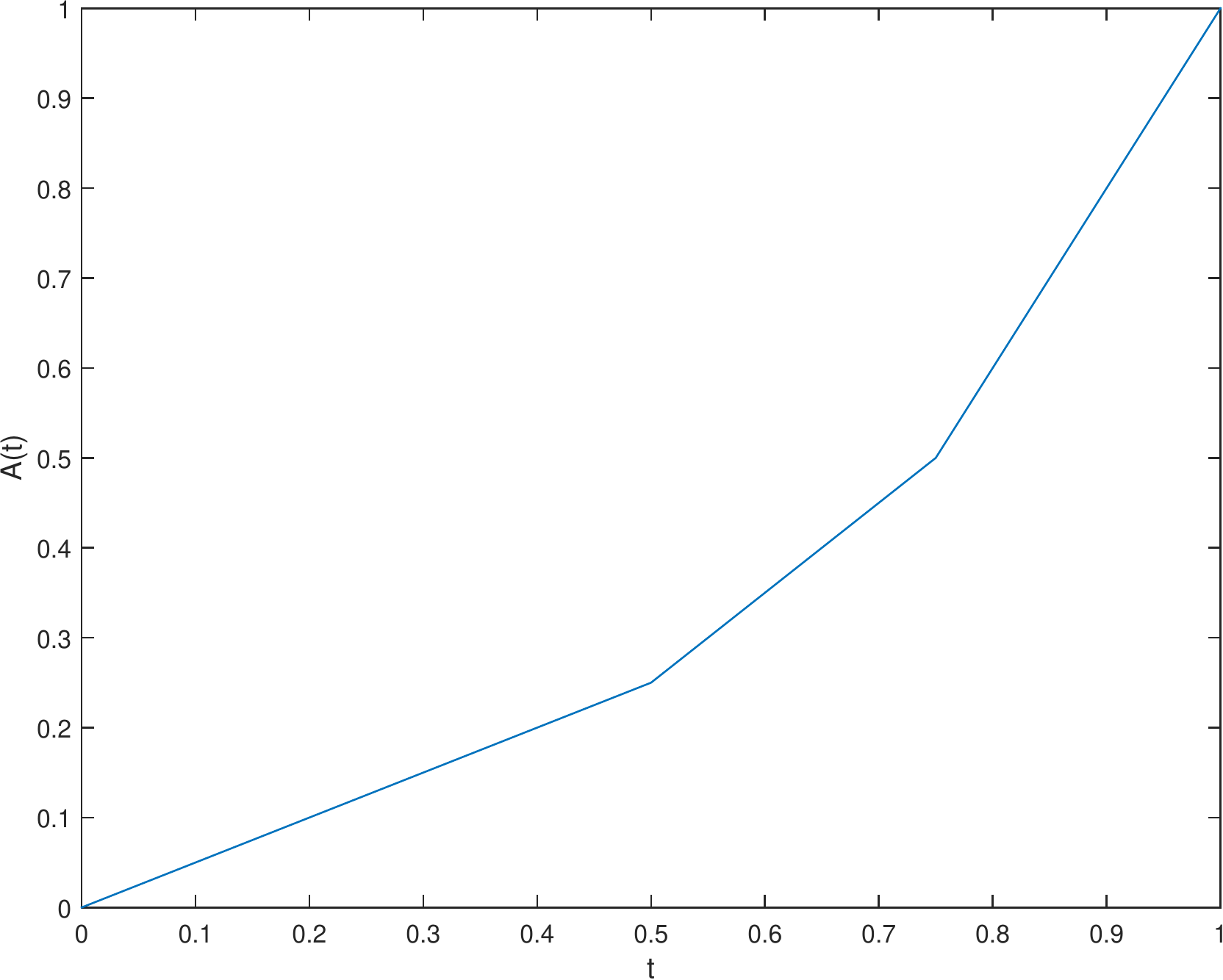}
\caption{
The graph of $A(t)$.
}
\label{F:A}
\end{center}
\end{figure}

\begin{figure}
[htbp]
\begin{center}
\includegraphics[height=60mm]{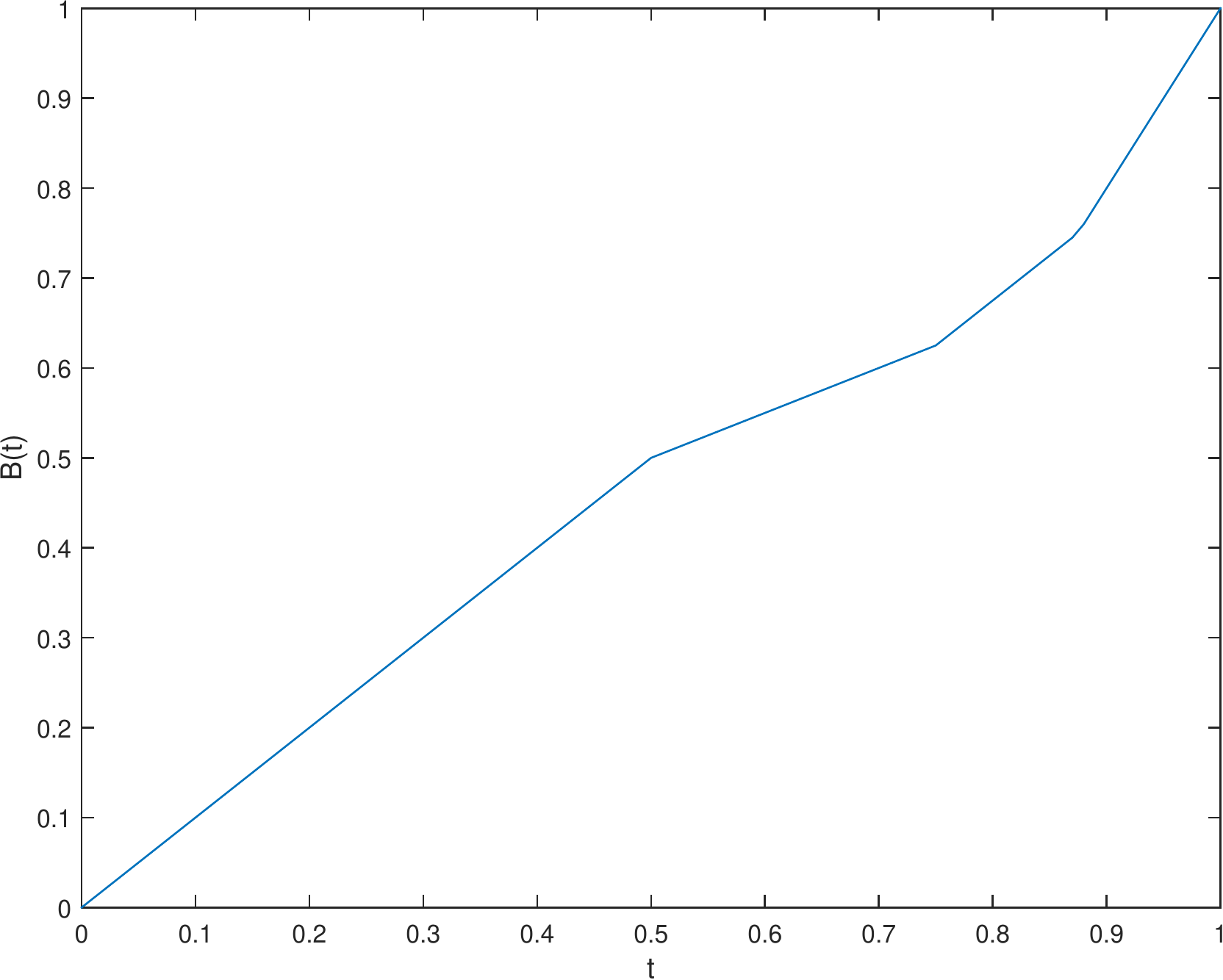}
\caption{
The graph of $B(t)$.
}
\label{F:B}
\end{center}
\end{figure}


Similarly, the \emph{Thompson group} $T$ is the group of orientation preserving piecewise linear (in the angular metric) homeomorphisms of the unit circle $\T$ that preserve the set of dyadic points, whose break points are dyadic points, and the slopes on intervals of linearity are integer powers of 2.
Elements of the Thompson group $F$ induce in the obvious way elements of $T$ that fix $1\in\T$. The group $T$ is generated by elements $A, B, C$, where $A$ and $B$ are the homeomorphisms of $\T$ induced by the elements $A$ and $B$ of $F$, respectively, and
$$
C(t)=\begin{cases}
t/2+3/4,\quad & 0\leq t\leq 1/2,\\
2t-1,\quad & 1/2\leq t\leq 3/4,\\
t-1/4,\quad & 3/4\leq t\leq 1.
\end{cases}
$$
\begin{figure}
[htbp]
\begin{center}
\includegraphics[height=60mm]{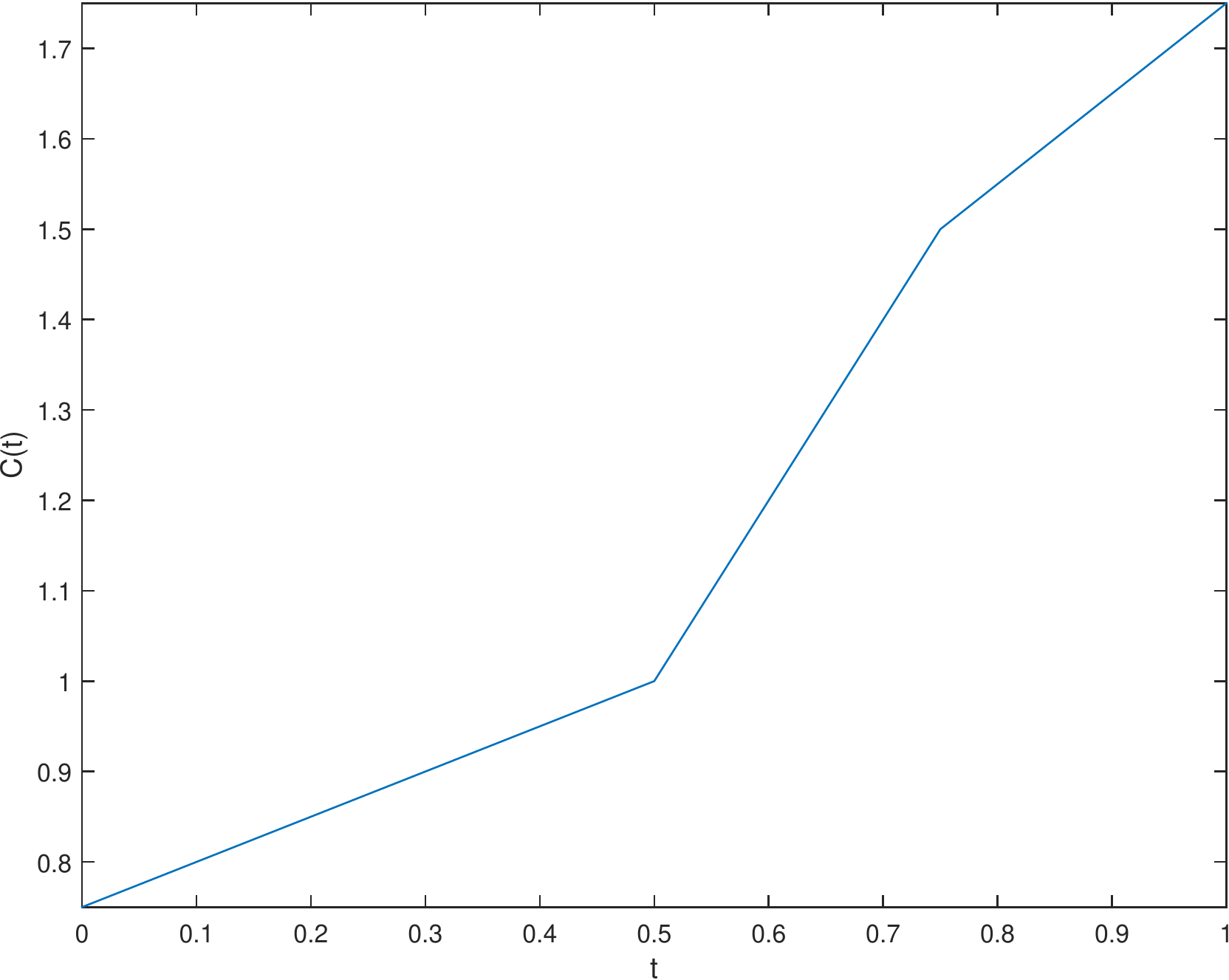}
\caption{
The graph of $C(t)$.
}
\label{F:C}
\end{center}
\end{figure}

\msk
\subsubsection{Pseudo-group $\Ups_g$} 
\no
A \emph{pseudo-group} $\Ups$ on a smooth manifold $M$ is a collection of diffeomorphisms $h\: U_h\to V_h$ between open subsets of $M$ that satisfy the following axioms:

\no
1. If $h_1$ and $h_2$ in $\Ups$ and $V_{h_1}\subseteq U_{h_2}$, then $h_2\circ h_1\in\Ups$; 

\no
2. If $h\in\Ups$, then $h^{-1}\in\Ups$;

\no
3. ${\rm id}_M\in\Ups$;

\no
4. If $h\in\Ups$ and $W\subseteq U_h$ is open, then the restriction $h|_{W}\in\Ups$;

\no
5. If $h\: U_h\to V_h$ is a diffeomorphism between open sets in $M$, and if for each $p\in U_h$ there is an open neighborhood $W$ of $p$ in $U_h$, such that $h|_W\in\Ups$, then $h\in\Ups$. 

For examples and background on pseudo-groups the reader may consult, e.g., \cite[Ch.~2]{CC00}.

For further reference, let us formulate two elementary lemmas:
\begin{lemma}\label{L:LargeScaleAppr}
Let $I_0$ and $I_1$ be two non-empty open arcs on $\T$ so that the endpoints are dyadic. 
Then there exists an orientation preserving piecewise linear map $\tau$ from $I_0$ onto $I_1$ whose break points  are dyadic points and such that on each interval of linearity, the slope of $\tau$ in the angular metric is $2^n$ for some $n\in\Z$.
\end{lemma}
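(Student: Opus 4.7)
The plan is to construct $\tau$ by selecting matched dyadic subdivisions of $I_0$ and $I_1$ with the same number of sub-arcs and then defining $\tau$ affinely on each matching pair.

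First I would choose a common level $N \in \N$ large enough that both endpoints of $I_0$ and both endpoints of $I_1$ lie in the set $\{k/2^N : k \in \Z\}/\Z$. Then $|I_0|=m/2^N$ and $|I_1|=m'/2^N$ for some positive integers $m$ and $m'$, and subdivision at the level-$N$ dyadic points partitions $I_0$ into $m$ and $I_1$ into $m'$ \emph{standard dyadic arcs} of level $N$, i.e.\ arcs of the form $e([k/2^N,(k+1)/2^N])$.

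Assume without loss of generality $m\leq m'$; the other case is symmetric after interchanging the roles of $I_0$ and $I_1$. I would refine the partition of $I_0$ until it also has $m'$ pieces by performing $m'-m$ successive \emph{binary splits}: at each step, pick one sub-arc in the current partition and replace it with the two halves obtained by bisecting at its midpoint. Since the midpoint of a standard dyadic arc of level $n$ is dyadic of level $n+1$, every sub-arc produced along the way is again a standard dyadic arc, and each split increases the sub-arc count by exactly one. After $m'-m$ splits we obtain a partition of $I_0$ into standard dyadic arcs $J^0_1,\dots,J^0_{m'}$, labelled in the orientation-preserving order along $I_0$.

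Let $J^1_1,\dots,J^1_{m'}$ be the level-$N$ sub-arcs of the partition of $I_1$ listed in order, and define $\tau\co I_0 \to I_1$ as the unique orientation-preserving homeomorphism that is affine on each $J^0_k$ and sends it onto $J^1_k$. Its break points are exactly the endpoints of the $J^0_k$, all of which are dyadic by construction. On the interior of $J^0_k$ the slope (in the angular coordinate) equals $|J^1_k|/|J^0_k| = (1/2^N)/(1/2^{N+r_k}) = 2^{r_k}$, where $r_k \geq 0$ is the number of binary splits applied along the way to produce $J^0_k$; in particular it is an integer power of $2$, as required. The only issue needing care is the combinatorial bookkeeping showing that any desired number of sub-arcs greater than or equal to $m$ can be obtained from the initial level-$N$ partition by binary splits, but this is immediate because each split changes the count by exactly $+1$, so no obstruction arises.
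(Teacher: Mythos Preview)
Your argument is correct and follows essentially the same idea as the paper's proof: both partition the arcs into standard dyadic sub-arcs and then map piece-to-piece with power-of-$2$ slopes. The only cosmetic difference is that the paper first rotates both arcs to start at $1$ and then inductively stretches the last sub-arc by a factor of $2$ (increasing the image length by $1/2^n$ at each step), whereas you instead bisect sub-arcs of the shorter partition until the piece counts match; these are inverse bookkeeping devices for the same construction.
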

\no
{\emph{Proof.}}
First, we may assume by applying rotations by dyadic numbers that both $I_0$ an $I_1$ have a common endpoint at 1, and they can be identified with intervals in $[0,1]$ with one endpoint at 0. Then the other endpoints of $I_0$ and $I_1$ are dyadic points $d_0,d_1\in (0,1]$, respectively.
We may assume that $d_0=k/2^n, d_1=m/2^n$ and $k<m$. We now apply the identity to the first $k-1$ intervals of length $1/2^n$ comprising $(0,d_0]$, and scale the last such interval by 2. All the slopes of such a map on $(0, d_0]$ are integer powers of 2 and this map takes $(0,d_0]$ to the interval $(0, d_0+1/2^n]$. Inductive process now finishes the proof. 
\qed

\medskip

Recall that $g$ stands for the doubling map of the circle. 
Given a path $\gamma\: [0,1]  \ra \T$, a moment $m\in \N_0$,
and a preimage $z_0\in g^{-m} (\gamma(0))$, 
we can uniquely lift the path $\gamma$ by $g^m $ to a path 
$\de\: [0,1]\ra \T$ with $\de (0) = z_0$.
Informally, we will refer to the corresponding analytic continuation
$\de(t) = g^{-m } (\gamma (t))$ of $g^{-m }$ along $\gamma$
as the \emph{branch} of  $g^{-m }$ on  $\gamma$  that \emph{starts} at $z_0$.

In particular, we can consider an arc $I\subset \T$,
a curve $g^n\: I \ra \T$,  and a branch of some $g^{-m}$
on this curve. We obtain  a composition
 $\xi= g^{-m}\circ g^n$  on $I$.
If the image $ \tl I: = \xi (I)$ is also an arc on $\T$
(i.e., $\xi\: I\ra \T$ does not ``wrap around the circle''),
then $\xi\: I\ra \tl I  $ is a linear (in the angular coordinate)
diffeomorphism with slope $2^{n-m}$.
Let us denote the pseudo-group of  such diffeomorphisms by $\Ups_g$.

\begin{lemma}\label{transitivity}
If $I$ and $\tilde I$ are arcs of $\T$ whose endpoints are dyadic, 
and if $\xi$ is a linear map of $I$ onto $\tilde I$ such that the slope of $\xi$ in the angular coordinate is an integer power of 2, 
then $\xi$ belongs to the pseudo-group $\Ups_g$. 
\end{lemma}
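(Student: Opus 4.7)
The plan is to exhibit $\xi$ itself as a composition $g^{-m}\circ g^n$ for appropriately chosen iterates, taking $n$ (and hence $m=n-s$) large enough to absorb the dyadic denominators coming from $\xi$; no appeal to the pseudo-group localization axiom is needed.

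First I would lift $I$ and $\tilde I$ to intervals in $\R$ (the universal cover of $\T$) and correspondingly lift $\xi$ to an affine map $\xi(\theta)=2^s\theta+c$, where $2^s$ is the given slope. Since the endpoints of $I$ and $\tilde I$ are dyadic and $\xi$ is orientation preserving (because its slope $2^s>0$), evaluating at a dyadic endpoint $\theta_0$ of $I$ shows that $c=\xi(\theta_0)-2^s\theta_0$ is a dyadic rational; write $c=p/2^q$ with $p\in\Z$, $q\in\N_0$. Now pick any integer $n$ with $n\ge\max(0,s,s+q)$ and set $m:=n-s\ge 0$. Then $2^m c=p\cdot 2^{m-q}\in\Z$, and in the lifted coordinates
$$
g^m(\xi(\theta))=2^m(2^s\theta+c)=2^n\theta+2^m c\equiv 2^n\theta\pmod{1}=g^n(\theta)
$$
for every $\theta\in I$, so $g^m\circ\xi=g^n$ on $I$.

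The identity $g^m\circ\xi=g^n$ says that $\xi$ is a continuous right inverse of $g^m$ along the curve $g^n\colon I\to\T$, starting at the point $\xi(\theta_0)$, which indeed lies in the fiber $g^{-m}(g^n(\theta_0))$. By uniqueness of analytic continuation of the covering map $g^m$, the map $\xi$ must coincide with the branch of $g^{-m}$ along $g^n(I)$ that begins at $\xi(\theta_0)$. Its image is the arc $\tilde I$ by hypothesis and therefore does not wrap around $\T$, so $\xi$ belongs to $\Ups_g$ by definition. The only place where anything subtle happens is the clearing-of-denominators condition $2^m c\in\Z$, which is made possible precisely by the hypothesis that the endpoints of $I$ and $\tilde I$ are dyadic.
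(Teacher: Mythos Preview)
Your proof is correct and takes a genuinely different route from the paper. The paper normalizes each arc by a rotation in $\Ups_g$: choosing $n$ with $g^n(a)=1$ and the branch of $g^{-n}$ starting at $1$ gives $r=g^{-n}\circ g^n$ rotating $I$ to an interval $J=[1,e(\theta)]$, and similarly $\tilde r$ rotates $\tilde I$ to $\tilde J=[1,e(\tilde\theta)]$; since the slope of $\xi$ is $2^k$ one has $\tilde J=g^k(J)$, so $\xi=\tilde r^{-1}\circ g^k\circ r$ is obtained as a composition of three pseudo-group elements, invoking the closure axiom. Your argument instead works directly in the lifted angular coordinate, picking $n$ large enough that $2^{n-s}c\in\Z$ and thereby realizing $\xi$ as a \emph{single} branch $g^{-m}\circ g^n$. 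This is more elementary (no appeal to the composition axiom) and yields an explicit representation in one step; the paper's decomposition is perhaps more geometric, separating the translation and scaling parts, but at the cost of an extra layer of composition.
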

\no
\emph{Proof.}
Let $I=[a,b]$ and $\tilde I=[\tilde a,\tilde b]$.
Since $a$ is assumed to be dyadic, we can find $n \in\N_0$ such that $g^n (a) =1$.
Let us consider the branch of  $g^{-n}$ on the path $g^n\: I \ra \T$
that starts at $1$. The composition $r= g^{-n} \circ g^n\: I \ra \T$ 
is a linear (in the angular coordinate)  map with slope $1$.
Hence it rotates $I$ to a dyadic interval $J= [1, e(\theta)]$, $\theta\in [0, 1]$. 

Likewise, we can find a moment $\tilde n\in \N_0$ and a branch of $g^{- \tilde n}$
such that the composition $\tilde r = {g}^{-\tilde n} \circ {g}^{\tilde n}\: \tilde I \ra \T$ 
rotates $\tilde I$ to a dyadic interval $\tilde J = [1, e(\tilde \theta)]$, $\tilde \theta\in [0, 1]$.
Moreover, $\tilde \theta = 2^k \theta$, where $2^k$ is the slope of $\xi$ on $I$, $k\in \Z$. 
Hence $\tilde J = g^k (J)$, 
where, in the case when $k$ is negative, 
the branch of $g^k$ is chosen so that the point 1 is fixed.


The desired map can now be written as  $\tilde r^{-1}\circ g^k\circ r$.  
\qed

\medskip


Let us say that a  piecewise linear homeomorphism
 $\T\ra \T$ is  {\em piecewise dynamical}
if its restriction to  every interval of linearity belongs to the
pseudo-group $\Ups_g$.  

\begin{corollary}\label{Thompson and dynamics} 
  Any element of the Thompson group $T$ is piecewise dynamical. 
\end{corollary}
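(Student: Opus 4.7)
The plan is to deduce this corollary as an essentially immediate consequence of Lemma~\ref{transitivity}. Take any element $\tau \in T$. By definition of the Thompson group, $\tau$ is an orientation preserving piecewise linear (in the angular coordinate) homeomorphism of $\T$ whose break points are dyadic, and whose slopes on intervals of linearity are integer powers of $2$.

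Fix an arbitrary interval of linearity $I \subset \T$ of $\tau$. Its endpoints are either break points of $\tau$ or points in the orbit of $1$ under the combinatorial structure, but in any case they are dyadic (the break points are dyadic by definition, and we may include $1$ among the break points without loss of generality). The image $\tilde I = \tau(I)$ is an arc of $\T$ whose endpoints, being images of dyadic points under an element of $T$, are also dyadic. Finally, $\tau|_I \: I \to \tilde I$ is a linear (in the angular coordinate) homeomorphism whose slope is, by hypothesis, an integer power of $2$.

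Thus $\tau|_I$ satisfies exactly the hypotheses of Lemma~\ref{transitivity}, and we conclude that $\tau|_I \in \Ups_g$. Since this holds for every interval of linearity of $\tau$, the map $\tau$ is piecewise dynamical. There is no real obstacle here; the content lies entirely in Lemma~\ref{transitivity}, where one realizes a linear piece with dyadic endpoints as a composition $\tilde r^{-1} \circ g^k \circ r$ of branches of iterates of $g$ and $g^{-1}$. The corollary simply packages that lemma into a global statement about all elements of $T$.
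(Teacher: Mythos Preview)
Your proof is correct and matches the paper's approach: the paper states the corollary without proof, treating it as an immediate consequence of Lemma~\ref{transitivity}, which is exactly what you do by applying that lemma to each interval of linearity. The only cosmetic remark is that your aside about ``points in the orbit of $1$'' is unnecessary---the endpoints of intervals of linearity are simply the break points, which are dyadic by definition of $T$ (and if $\tau$ has no break points it is a dyadic rotation, which is trivially in $\Ups_g$).
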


\medskip

For instance, the $A$-generator of the Thompson group can be obtained as follows:
$$
A=\begin{cases}
g^{-1},\quad & 0\leq t\leq 1/2,  \quad {\mathrm{where\ the\ branch}}\ 
                          g^{-1} \ {\mathrm{fixes\ 1}},   \\
g^{-2}\circ g^2,\quad & 1/2\leq t\leq 3/4, \quad   
  {\mathrm{where}}\  g^{-2}\:   (0,1) \ra (1/4, 1/2),  \\
g,\quad & 3/4\leq t\leq 1. 
\end{cases}
$$

\msk
\subsubsection{Piecewise dynamical action of the Thompson group on $\Ju$}
\no
Similarly to $\Ups_g$, let us consider the \emph{dynamical pseudo-group} $\Upsilon_f$
comprising local isomorphisms $\psi=  f^{-m} \circ f^n\: V \ra W$,
where $m,n\in\N_0$, $V$ is simply connected, 
 $f^n|_V$ is univalent,
 and  $f^{-m}\circ f^n|_V $ is a well defined analytic branch of the 
algebraic function $f^{-m}\circ f^n$.   Note that
 in the log-B\"ottcher coordinate $u= \log \phi_\infty (z) $,
the map $\psi$ is affine with slope $2^{n-m}$, 
$u\mapsto 2^{n-m} u + c$, $c\in \R$.  
%

Let us say that a homeomorphism $\xi\: J\ra \tilde J$ between Julia arcs  is {\em piecewise dynamical}
if $J$ can be decomposed into finitely many Julia arcs $J_i$ that share only global 
cut-points, so that each restriction $\xi|_{J_i}$,  extended to some
neighborhood $W_i$,  belongs to the pseudo-group $\Ups_f$. If $J=\tilde J=\Ju$, piecewise dynamical homeomorphisms $\xi\: \Ju\ra \Ju$ form a group, and we denote it by $\Dyn (\Ju)$. 

\comm{*****
Let $z_1$ and $z_2$ be (possibly the same) global cut-points of $\Ju$, and let $J$ be a connected component of $\Ju\setminus\{z_1,z_2\}$. A \emph{piecewise dynamical} map $\xi$ defined on $J$ is a homeomorphism of $J$ onto a connected component of $\Ju\setminus\{\tilde z_1,\tilde z_2\}$, where $\tilde z_1, \tilde z_2$ is another pair of global cut-points of $\Ju$, such that $J$ is a union of finitely many components on each of which $\xi$ is a composition of integer iterates of $f$. We also say that a global homeomorphism $\xi$ of $\Ju$ is \emph{piecewise dynamical} if it is piecewise dynamical when restricted to each $J$, a connected component of $\Ju\setminus\{z_1,z_2\}$ for some global cut-points $z_1, z_2$ of $\Ju$. 
**}

Let us also say that a map $h\: U\ra V$ between two bounded Fatou components 
\emph{respects the B\"ottcher coordinate} if it is the identity  
in the B\"ottcher coordinates of these components, 
i.e., 
$$
    \phi_V \circ h \circ \psi_U = \id\: \D \ra \D.
$$

\begin{lemma}\label{lem:thom}
The Thompson group $T$ induces the group of
piecewise dynamical  quasisymmetries of $\Ju$ 
that keep the central Fatou component $\dee U_0$ invariant. 
Moreover, they admit global quasiconformal extensions to $\C$ 
that respect the B\"ottcher coordinates
of all non-central bounded Fatou components. 
\end{lemma}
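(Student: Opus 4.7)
The plan is to build, for each $\tau \in T$, a piecewise dynamical homeomorphism $\tilde\tau$ of $\Ju$ extending $\tau$ (via the B\"ottcher coordinate $\phi_0$), and simultaneously a global quasiconformal self-map $\hat\tau$ of $\C$ restricting to $\tilde\tau$ on $\Ju$; conversely to recognize every piecewise dynamical quasisymmetry of $\Ju$ fixing $\partial U_0$ as coming from a unique $\tau\in T$. I start by invoking Corollary~\ref{Thompson and dynamics} to write $\tau$ on $\T$ as a finite concatenation $\tau|_{I_k} = g^{-m_k}\circ g^{n_k}$ on dyadic arcs $I_k$; refining the decomposition, I may assume all $I_k$ have a common dyadic level $L \geq \max_k n_k$. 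Since $\phi_0$ conjugates $f^2|_{\overline{U_0}}$ to $g|_{\overline\D}$, the restriction of $\tau$ to $\phi_0^{-1}(I_k) \subset \partial U_0$ is realized by the dynamical pseudo-group element $F_k := f^{-2m_k}\circ f^{2n_k}$, with the branch of $f^{-2m_k}$ chosen to land in the correct bounded Fatou component.

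Next I extend $F_k$ from $\phi_0^{-1}(I_k)$ to every limb $\KK_d$ attached at a dyadic point $d \in I_k$. The key dynamical observation is that every bounded Fatou component inside $\KK_d$ (with $d$ of level $n \geq n_k$) has dynamical distance at least $2n+1$ from $U_0$, so $f^{2n_k}$ meets no critical point in $\KK_d$ and is univalent on it; the prescribed branch of $f^{-2m_k}$ then maps its image univalently onto $\KK_{\tau(d)}$. To verify consistency at a dyadic endpoint $d$ shared between $I_{k-1}$ and $I_k$, I compare the two candidate extensions on the central Fatou component $U$ of $\KK_d$ in B\"ottcher coordinates. Using $\phi_U = \phi_0\circ f^{2n+1}$ and $\phi_V = \phi_0\circ f^{2n'+1}$ with $n' = n - n_k + m_k$, the conjugated map $\phi_V\circ F_k\circ \phi_U^{-1}$ is a chain of $f$-powers of total exponent
\[
(2n'+1) - 2m_k + 2n_k - (2n+1) = 0,
\]
which telescopes to $\operatorname{id}_\D$. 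Hence both candidates equal the identity in B\"ottcher coordinates on $U$, and by uniqueness of dynamical isomorphisms between corresponding limbs, they agree on all of $\KK_d$. This simultaneously establishes the B\"ottcher-identity property on non-central Fatou components claimed in the ``moreover'' clause.

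For the global quasiconformal extension $\hat\tau : \C \to \C$, I glue the following pieces. On each non-central bounded Fatou component $U \to V$, I take $\psi_V\circ\phi_U$, which is conformal and agrees with $\tilde\tau$ on $\partial U$ by the telescoping identity above. On $U_0$, I conjugate via $\phi_0$ any Ahlfors--Beurling quasiconformal extension of $\tau|_\T$ to $\overline\D$. On $U_\infty$, I apply Lemma~\ref{L:EmbQS} to see that the induced map on the external unit circle is quasisymmetric and Ahlfors--Beurling extend it across. Every bounded Fatou component is a uniform quasicircle by hyperbolicity of $f$, the boundary values match by construction, and standard quasiconformal gluing across quasicircles produces a quasiconformal self-map of $\C$. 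Its restriction to $\Ju$ is then automatically quasisymmetric by the egg yolk principle applied to $\hat\tau$.

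For the converse, let $\xi$ be a piecewise dynamical quasisymmetry of $\Ju$ preserving $\partial U_0$. The cut-point decomposition of $\xi$ restricted to $\partial U_0$ is by dyadic points (the global cut-points of $\Ju$ on $\partial U_0$), and on each sub-arc $\xi$ is a pseudo-group element of the form $f^{-m}\circ f^n$ with $U_0\to U_0$; after parity reduction this becomes $g^{-a}\circ g^b$ in $\phi_0$-coordinates, which is linear with dyadic breakpoints, with slope a power of 2, and preserves the dyadic set, so $\phi_0\circ \xi|_{\partial U_0}\circ\psi_0$ lies in $T$. I expect the hardest step to be the consistency verification at shared dyadic endpoints in the forward direction, since it is precisely this B\"ottcher-identity telescoping that makes the construction canonical and forces the rigidity claimed on non-central components.
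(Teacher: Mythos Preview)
Your overall architecture matches the paper's, but there are two genuine gaps.

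\textbf{Circular appeal to Lemma~\ref{L:EmbQS}.} To extend $\hat\tau$ across $U_\infty$ you invoke Lemma~\ref{L:EmbQS}, whose hypothesis is that $\tilde\tau$ is already an $\eta$-quasisymmetry of $\Ju$. That is precisely what you are trying to establish, so the argument is circular. The paper avoids this by observing directly that the induced boundary map $h\:\T\to\T$ is \emph{piecewise linear} in the outer angular coordinate (each $F_k$ is affine in the log-B\"ottcher coordinate), hence manifestly quasisymmetric; it then Ahlfors--Beurling extends, checks explicitly that $h$ preserves the basilica lamination, and descends. You should also invoke removability of $\Ju$ for quasiconformal maps (as the paper does, citing~\cite{Jo}): ``standard gluing across quasicircles'' does not by itself handle infinitely many Fatou components accumulating on $\Ju$.

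\textbf{Dynamical distance and univalence on limbs.} Your claim that every bounded component in $\KK_d$ with $d$ of dyadic level $n$ has dynamical distance $\geq 2n+1$ is false: for $d=-\alpha$ (level $1$) the attached component is $U_1$, and $f(U_1)=U_0$ conformally, so the distance is $1$, not $3$. The correct value for the central component is $2n-1$ (for $n\geq 1$), and consequently $f^{2n_k}$ is univalent on the wake $S_d$ only when $n\geq n_k+1$, not $n\geq n_k$. Your telescoping exponent computation happens to survive this error (the $\pm 1$ cancels), but the univalence claim does not: at the \emph{endpoints} of your refined intervals $I_k$---which may have small level---neither $F_{k-1}$ nor $F_k$ need be univalent on the attached limb, so the consistency check you propose is not even well-posed there. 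The paper handles this by first extending $\hat\xi_0$ analytically to the regions $\Omega_J$ away from finitely many low-level dyadic points, and then defining the map on those finitely many exceptional limbs \emph{separately} via the wake maps $f^{-\tilde k}\circ f^k\:S_z\to S_{\tilde z}$; no consistency check between adjacent pieces is needed because each exceptional limb receives a single canonical definition.
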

\no
\emph{Proof.}
Let $\xi_0$ be an element of $T$ acting on $\dee U_0$
piecewise linearly in the (inner) angular coordinate.
We want to extend it to 
an orientation preserving   quasisymmetric    
homeomorphism of $\Ju$
(actually, to a global quasiconformal map of $(\C, \Ju)$). 


Let $I\subseteq \dee U_0$ be an arc of linearity for $\xi_0$.
By Corollary \ref{Thompson and dynamics},
$\xi_0|_I =  f^{-m}\circ f^n$  for some $n,m\in \N_0$.
Hence $\xi_0$ admits an analytic extension $\hat \xi_0$ to a neighborhood $V$ of $I$ as an element of the dynamical pseudo-group $\Upsilon_f$.

Let us puncture out from $I$ all dyadic points of level $\leq n$. 
Take any complementary (open)  interval $J \subseteq I$. 
Let $\Omega_J$ be the region bounded by $J$ and two external rays landing at 
$\dee J$ (where the rays are selected so that $\Omega_J$ does not
intersect the wakes attached to $\dee J$). 
Since $ f^n(\Omega_J) $ is disjoint from $U_0\cup U_{-1}$, 
all branches of $f^{-m}$ are well defined on it, so $\hat\xi_0$ extends
analytically to $\Omega_J$ (mapping it to some  $\Omega_{\tilde J}$).  We restrict this map to the intersection of $\Omega_J$ with the filled Julia set $\KK$ and extend it continuously to the closure $\KK_J$ of this set.

In this way, we extend $\xi_0$ to all $\KK\sm U_0$,
except finitely many limbs 
attached to $\dee U_0$.
Let $S_z$ be one of the wakes containing such a limb.
Its root  $z\in \dee U_0$ is a dyadic point of some level $k$.
Let $\tl z= \xi_0(z)$. Since $\xi_0$ is Thompson,
$\tl z$ is also dyadic (of some level $\tl k$), 
so there is  a wake  $ S_{\tl z} $ rooted at $\tl z$.

Recall that  $S_0$ is the sector bounded by $\rho^{\pm\frac13}$ that contains $U_{0}$.
Then $f^k$ and $f^{\tl k}$ conformally  map 
 $S_z$ and $S_{\tl z} $, respectively, onto $S_0$. (If $k$ or $\tilde k$ is 0, then $f$ takes $S_z$, respectively $S_{\tl z}$, to $S_0$.)

Thus, we obtain a map $\xi_z  = f^{-\tl k}\circ   f^k\: S\ra  \tl S $
of pseudo-group $\Ups_f$. 
Restricting it to the filled Julia set,
we obtain a homeomorphism $\xi_z\: \KK_z \ra \KK_{\tl z}$
between the corresponding limbs. 
 Putting these maps together,
we obtain an extension of $\xi_0$ to 
a homeomorphism 
$\xi\: \KK \sm U_0 \ra \KK\sm U_0$ that restricts to an element in $\Dyn(\Ju)$.   

Let us show that the map $\xi$ admits a global quasiconformal extension to the complex
plane. By definition, it has a conformal extension to each non-central bounded Fatou component as a conformal map $f^{-m}\circ f^n$.  Since $ \dee U_0 $ is a quasicircle and 
$\xi_0\: \dee U_0 \ra \dee U_0 $ is piecewise linear in
the inner angular coordinate, it is quasisymmetric. 
Hence it admits a quasiconformal extension to $U_0$
(Ahlfors-Beurling extension \cite{BA}). 

We now extend $\xi$  to $U_\infty$.
Let $J_i$ be the arcs of $\dee U_0$
 considered above, and let $z_i$  be their boundary  dyadic points.
Let $\YY_i = \dee \KK_{J_i}$ and $\ZZ_i = \dee \KK_{z_i}$ be the corresponding Julia arcs.   
In  the B\"ottcher coordinate, 
these  Julia arcs  correspond to some arcs $Y_i$ and $Z_i$   
tessellating  the circle $\T$. 

Since $\xi$ belongs to the dynamical pseudo-group $\Ups_f$ 
on each $\YY_i$ and $\ZZ_i$,
it induces linear 
(in the outer angular coordinate of the circle $\T$)  
 maps  $Y_i\ra \tl Y_i$ and $Z_i\ra \tl Z_i$ .
Moreover, since $\xi_0$ preserves the
cyclic order in which  the arcs $J_i$ and points $z_i$ appear on $\dee U_0$, 
these maps form  a single piecewise linear
homeomorphism $h\: \T\ra \T$. Applying the Ahlfors-Beurling extension once
again, we obtain a quasiconformal extension  $\hat h$ of $h$ to $U_\infty$.

Note that $h$ preserves the basilica lamination on $\T$. 
It is so on 
$$
 \T\sm \bigcup \di Z_i \quad  (=\T \sm \bigcup \di Y_i)
$$   
since $\xi$ 
admits local homeomorphic extensions to $\C$ near  any point of  $\Ju\sm \{z_i\}$. 
It is also true at the points of $\bigcup \di Z_i$.
Indeed, the lamination pairs the boundary points of each
$Z_i$ (corresponding to the rays enclosing the limb $\ZZ_i$). 
Since $\xi$ maps the  limb $\ZZ_i$ to another limb $\tl \ZZ_i$, 
the boundary points of $\tl Z_i=h(Z_i)$ 
correspond to the rays enclosing $\tl \ZZ_i $. So, they are paired
by the lamination. 
Hence $\hat h$ descends to a homeomorphism of $(\C, \KK)$ 
providing us with the desired quasiconformal extension of $\xi$ to $\C$. Here we also need the fact that the Julia set $\Ju$ is removable for quasiconformal maps~\cite[Corollary~2 on
p.~5 and Remark on p.~3]{Jo}. We continue to denote this extension by $\xi$.

It remains to show that $\xi$ respects the B\"ottcher coordinates of all non-central bounded Fatou components. In each such component $U$ it has the form $f^{-m}\circ f^n$ and maps it to another such component $\tilde U$. The claim now follows from the fact that the B\"ottcher coordinate of a bounded Fatou component with dynamical distance $k$ to $U_0$ is $\phi_0\circ f^k$. Indeed, since in the construction  above $n$ is chosen so that $f^n(U)$ does not contain the postcritical points, we have that $n$ is at most the dynamical distance from $U$ to $U_0$. If $l\in\N_0$ is such that $n+l$ is the dynamical distance from $U$ to $U_0$, then for the branch of $f^{-l}$ that takes $U_0$ to $f^n(U)$, the map $\xi$ in $U$ is given by
$
f^{-m}\circ f^{-l}\circ f^{n+l}.
$
We conclude that $m+l$ is the dynamical distance from $\tilde U$ to $U_0$ and for an appropriate branch of $f^{-(m+l)}$ the map $\xi$ in $U$ is given by
$$
f^{-(m+l)}\circ f^{n+l}.
$$
This is equivalent to saying that $\xi$  respects the B\"ottcher coordinates.
\qed

\comm{

********************************************************************************
Let $\xi_0$ be an element of $T$ acting on $\dee U_0$.
Let $U\neq U_0$ be any bounded Fatou component that touches $U_0$. We proved in Lemma~\ref{L:Basepoint} that the point of the intersection $z$ is the root of $U$. 
Let $\rho_1$ and $\rho_2$ denote the two external rays in $\mathcal R_n$ that land at $z$. These correspond to the two radial rays in $\C\setminus{\overline\D}$  that land at the two preimages of $z$ under $\psi_{\infty}$. These two preimages are elements of $D_n$. Let $S$ denote the open sector in $\C$ bounded by $\rho_1$ and $\rho_2$ that contains $U$, and hence does not contain $U_0$. Such a sector is referred to as a \emph{wake} in dynamics. The intersection $S\cap U_\infty$ corresponds under $\phi_{\infty}$ to the sector in $\C\setminus{\overline\D}$ of angle at most $1/3$.  The angle $1/3$ is only achieved for the Fatou components $U=U_{-1}$ or $U_1$. The map $f^n$ takes the wake $S$ conformally onto the wake $S_0$ in $\C$ between two external rays in $U_\infty$ landing at $\alpha$ that contains $U_0$. Indeed, the application of $f^k,\ 0\le k\le n$ to $S\cap U_\infty$ corresponds to increasing the angle between the corresponding rays in $\C\setminus{\overline\D}$ by the factor $2^k$. The angle corresponding to $f^{n-1}(S\cap U_\infty)$ equals $1/3<1/2$, and hence $f^n$ is conformal on $S\cap U_\infty$. The map $f^n$ also has a conformal extension to all of $S$ because the bounded Fatou components contained in $S$ do not have critical points and there are no critical points in the Julia set.

Let $\tilde z=\xi_0(z)$. Because $\xi_0$ preserves the set of dyadic points, $\tilde z$ is the root of some bounded Fatou component $\tilde U$ that touches $U_0$.  Let $\tilde S$ be the corresponding wake for $\tilde U$. It is defined by two external rays $\tilde\rho_1$ and $\tilde\rho_2$ that land at $\tilde z$. If $\tilde n$ is the dynamical distance from $\tilde U$ to $U_0$, the map $f^{-\tilde n}\circ f^n$ is the conformal map from $S$ onto $\tilde S$, where $f^{-\tilde n}$ is the inverse branch that takes $\alpha$ to $\tilde z$. The map $f^{-\tilde n}\circ f^n\: S\to \tilde S$ is determined uniquely 
by the following property.
It is a conformal map of $S$ onto $\tilde S$ that takes $\Ju\cap S$ onto $\Ju\cap\tilde S$. This follows from the fact that $S\cap U_\infty$ and $\tilde S\cap U_\infty$ are topological triangles with vertices at $\infty$ and the ends of the defining external rays $\rho_1,\rho_2$ and $\tilde\rho_1,\tilde\rho_2$, respectively. Any conformal map $\phi$  of $S$ onto $\tilde S$ that takes $\Ju\cap S$ onto $\Ju\cap\tilde S$ restricts to a conformal map $\phi_0$ between the topological triangles $S\cap U_\infty$ and $\tilde S\cap U_\infty$, preserving the vertices. Such a restriction is unique since conformal maps are determined by a three point normalization of the boundary. Therefore the map $\phi$ is unique as the unique conformal extension of $\phi_0$.

By applying the above procedure to each bounded Fatou component $U$ that touches $U_0$,  we extended any element $\xi_0$ from $\dee U_0$ to a map $\xi$ of all of $\Ju$ onto itself. 

We need to show that $\xi$ is an orientation preserving quasisymmetric homeomorphism of $\Ju$. 
 To do this we first demonstrate that one can achieve a certain uniform bound, depending on $\xi_0$, 
for $n$ and $\tilde n$ in the definition of $\xi$ above, by using cancellations in $f^{-\tilde n}\circ f^n$.   
Indeed, the element $\xi_0\in T$ is piecewise linear in the angular metric. 
Let $I\subset \dee U_0$ be an arc of linearity for $\xi_0$.
We may assume that $I$ contains only one dyadic point that is the root of the Fatou component with the  smallest dynamical distance $n$ to $U_0$ and such that $I$ does not contain dyadic points that are roots of Fatou components with dynamical distance $n+1$ to $U_0$. Otherwise we can subdivide $I$ into finitely many subarcs with this property. 
From the description of the elements $A, B$, and $C$ above, we know that $\xi_0$ can be represented on $I$ as a composition of iterates of $h=f^2$ and its inverse branches. Therefore, if $U$ is any Fatou component whose root $z$  is contained in $I$ and $\tilde U$ is the Fatou component whose root is $\xi_0(z)$, then the difference of the dynamical distances from $U$ to $U_0$ and from $\tilde U$ to $U_0$ does not depend on $U$. This difference depends only on $I$.

Let $\rho_1$ and $\rho_2$ be two external rays in $U_\infty$ that land at the endpoints of $I$ and let $S'$ be the open region in $\C\setminus \overline U_0$ bounded by $\rho_1, \rho_2$, and $I$. Let $\tilde I=\xi_0(I)$,
let $U$ be the Fatou component whose root $z$ is in $I$ and that has the smallest dynamical distance $n$ to $U_0$, and let $\tilde U$ be the Fatou component whose root is $\tilde z\in\tilde I$ and the dynamical distance from $\tilde U$ to $U_0$ is $\tilde n$. Then  $f^n(I)=f^{\tilde n}(\tilde I)$ and the map $f^{-\tilde n}\circ f^n$ is a conformal map from $S'$ onto the open region $\tilde{S'}$ in $\C$ bounded by 
$\tilde I$ and two external rays landing at the endpoints of $\tilde I$. Here $f^{-\tilde n}$ is the inverse branch of $f^{\tilde n}$ that takes $\alpha$ to the root $\tilde z$ of $\tilde U$. The assertion that $f^n(I)=f^{\tilde n}(\tilde I)$ follows from the facts that $f^n(U)=f^{\tilde n}(\tilde U)$,  that the dynamical distance from $f^n(V)$ to $U_0$ is the same as the dynamical distance from $f^{\tilde n}(\tilde V)$ to $U_0$ for each $V$ whose root is contained in $I$, and that $\xi_0$ is orientation preserving. The claim that $f^{-\tilde n}\circ f^n$ is a conformal map from $S'$ onto $\tilde S'$  follows from an argument similar to the one in the case of wakes. Note that our assumption that $I$ does not contain roots of Fatou components with dynamical distance $n+1$ implies that $f^{n-1}(S'\cap U_\infty)$ has angle at most $1/2$. Since in the B\"ottcher coordinate $f$ corresponds to $g(z)=z^2$, the claim follows.  

The restriction of the conformal map $f^{-\tilde n}\circ f^n$ to every wake $S$ at the root of $I$ agrees with $\xi$. This follows from the uniqueness of conformal maps between wakes, since the map $f^{-\tilde n}\circ f^n$ preserves external rays and hence has the three point normalization.  

We now show that $\xi$ is quasisymmetric by showing that it has a
quasiconformal extension to all of $\C$. Indeed, from the definition,
$\xi$ has a conformal extension to each Fatou component other than
$U_0$ and $U_\infty$. Moreover, since, as follows from
Lemma~\ref{L:Basepoint}, the map  $\xi$ preserves the roots of bounded
Fatou components other than $U_0$, if $U\neq U_0$ is such a component
and $\xi(\dee U)=\dee \tilde U$, the conformal extension of $\xi$ into
$U$ is the identity in the B\"ottcher coordinates of $U$ and $\tilde
U$. Also, it follows from the definition that the conjugate maps
$\phi_{0}\circ \xi\circ\psi_{0}$, where $\psi_0=\phi_0^{-1}$, and $\phi_{\infty}\circ
\xi\circ\psi_{\infty}$ are orientation preserving piecewise
linear in the angular metric of $\T$. Therefore one can use the
Ahlfors--Beurling extension~\cite{BA} to conclude that these maps have
quasiconformal  extensions to $\D$ and $\C\setminus{\overline\D}$,
respectively. Thus $\xi$ has a quasiconformal extension to $U_0$ and
$U_\infty$ as well. The Julia sets of postcritically finite
polynomials are removable for quasiconformal maps~\cite{Jo}. The claim now follows from the fact that
restrictions of quasiconformal maps in $\C$ to compact subsets are
quasisymmetric.  } 

\comm{***
A similar construction to the one discussed in the proof of Lemma~\ref{lem:thom} produces piecewise dynamical maps defined on connected components of $\Ju\setminus\{z_1,z_2\}$, where $z_1$ and $z_2$ are  global cut-points on $\dee U_0$. Namely, instead of considering an action of an element of the Thompson group $T$ on the whole Julia set $\Ju$, we look at an action on a connected component $J$ of $\Ju\setminus\{z_1,z_2\}$ of a piecewise linear map $\tau$ defined on an interval $I$ of $\T$, whose break  points   are dyadic and whose slopes are integer powers of 2. The interval $I$ corresponds via the B\"ottcher coordinate $\phi_0$ to an arc (possibly degenerate) on $\dee U_0$ between dyadic points. Since the slopes of $\tau$ are assumed to be integer powers of 2, the corresponding maps on the wakes of $\Ju$ are integer iterates of $f$.   
****}

\medskip

\subsubsection{Thompson-like action}
\no
The  Thompson group $T$  is based on dy\-adic points, 
iterated preimages of the fixed point $1$ under
the doubling map $g$. More generally, one can consider iterated
preimages of any periodic cycle 
$\bal$ and define the associated  \emph{Thompson-like} group $T_\bal$. I.e., the elements of $T_\bal$ preserve the set $D_\bal$ of all preimages of the elements of $\bal$ under the iterates of the map $g$,  the break points are at $D_\bal$, and the slopes are integer powers of 2. 
The above discussion readily extends to this setting,
and in particular, we have the following counterpart of Corollary~\ref{Thompson and dynamics}:

\begin{corollary}\label{Thompson and dynamics 2} 
  Any element of the Thompson-like group $T_\bal$ is piecewise dynamical. 
\end{corollary}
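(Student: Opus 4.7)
The plan is to reduce Corollary~\ref{Thompson and dynamics 2} to an analog of Lemma~\ref{transitivity} for the set $D_\bal$, in the same way that Corollary~\ref{Thompson and dynamics} follows from Lemma~\ref{transitivity}. The generalized statement I would establish is: if $I, \tilde I\subset \T$ are arcs with endpoints in $D_\bal$ and $\xi\co I\to \tilde I$ is an orientation-preserving map, linear in the angular coordinate with slope an integer power of $2$, arising as the restriction of an element of $T_\bal$, then $\xi\in \Ups_g$. Applying this to each interval of linearity of a given element of $T_\bal$ yields the corollary.

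To prove the generalization, I would follow the rotate--scale--rotate strategy of Lemma~\ref{transitivity}, using elements of the cycle $\bal$ in place of the fixed point $1$. Let $a, \tilde a$ be the left endpoints of $I, \tilde I$, let $p$ be the period of $\bal$, and let $n_0, \tilde n_0$ be the minimal exponents with $g^{n_0}(a)=\alpha_{i_0}\in \bal$ and $g^{\tilde n_0}(\tilde a)=\alpha_{j_0}\in \bal$. For any sufficiently large multiple $n$ of $p$, the iterate $g^n$ fixes every element of $\bal$, so there is a well-defined branch of $g^{-n}$ at $g^n(a)\in \bal$ fixing that point; the composition $r:=g^{-n}\circ g^n$ is then a slope-$1$ map sending $I$ onto an arc $J$ whose left endpoint lies in $\bal$. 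An analogous map $\tilde r$ rotates $\tilde I$ onto an arc $\tilde J$ with left endpoint in $\bal$, and the iterate $g^k$ (or an appropriate inverse branch when $k<0$) realizes the scaling $J\to \tilde J$, so that $\xi=\tilde r^{-1}\circ g^k\circ r$ visibly lies in $\Ups_g$.

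The main technical obstacle is verifying that the scaling step is actually compatible with the cycle: namely, $g^k$ must send the left endpoint of $J$ to the left endpoint of $\tilde J$. This amounts to a congruence modulo $p$ involving $k$, $i_0$, $j_0$, $n_0$, $\tilde n_0$, which is trivial when $p=1$ (the setting of Lemma~\ref{transitivity}) but must be justified for general $p$. I would do so by invoking the global $T_\bal$-structure of $\xi$: writing $\xi(t)=2^kt+c$ on the linearity interval and using the representation $x=(\alpha_l+m)/2^{n(x)}$ for points $x\in D_\bal$, the requirement that $\xi(x)\in D_\bal$ for $x\in D_\bal\cap I$ near $a$ forces the offset $c$ to be a dyadic rational; matching the $\alpha$-components in the identity $\tilde a=2^k a+c$ modulo dyadic fractions then yields the required congruence. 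This is the cyclic analog of the automatic dyadicity of offsets in the classical Thompson group.
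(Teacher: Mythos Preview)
Your proposal is correct and is precisely the natural way to flesh out the paper's one-line ``the above discussion readily extends to this setting.'' The rotate--scale--rotate template of Lemma~\ref{transitivity} is exactly what is intended, and you have correctly isolated the one genuinely new issue: when $p>1$ the scaling step $g^k$ must carry the chosen element of $\bal$ at the left end of $J$ to the chosen element at the left end of $\tilde J$, which is automatic for $p=1$ but not otherwise.

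Your resolution is right, and it is worth noting that it really does require the full $T_\bal$ hypothesis, not merely that the endpoints of $I,\tilde I$ lie in $D_\bal$: the naive analog of Lemma~\ref{transitivity} is false for $p>1$ (for $\bal=\{1/3,2/3\}$ the map $t\mapsto t+1/3$ has slope $1$ and endpoints in $D_\bal$ on suitable arcs, but offset $1/3\notin\Z[1/2]$, hence is not in $\Ups_g$). The cleanest way to carry out your dyadic-offset step is to observe that $D_\bal=\bigcup_{i=0}^{p-1}(\alpha_i+\Z[1/2])$ is a union of $p$ cosets of $\Z[1/2]/\Z$, each dense in $\T$ and hence meeting $I$; multiplication by $2^k$ permutes these cosets cyclically, and the requirement $2^kx+c\in D_\bal$ for $x$ ranging over \emph{all} cosets forces $c\in\Z[1/2]$. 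Your sketch (``using the representation $x=(\alpha_l+m)/2^{n(x)}$'') points in exactly this direction; just make sure to use points from at least two distinct cosets, since a single point only pins down $c$ modulo $D_\bal-D_\bal$, which is strictly larger than $\Z[1/2]$.
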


In what follows, we let $\bal=\{1/3, 2/3\}$. 
Let $T_\bal^b$ be the subgroup of the Thompson-like group $T_\bal$ consisting of homeomorphisms
$\T\ra \T$ that preserve the basilica lamination.

\begin{lemma}\label{L:homeos2}
Any  piecewise dynamical homeomorphism  $\xi\: \Ju\ra \Ju$ induces a  homeomorphism $\xi_\infty\: \T \ra \T$ that belongs to $T_\bal^b$.
Moreover, the map $\Dyn (\Ju) \ra T_\bal^b$,   $\xi\mapsto\xi_\infty$, 
is a group isomorphism. 
\end{lemma}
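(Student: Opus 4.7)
The plan is to combine Lemma~\ref{homeos} with Corollary~\ref{Thompson and dynamics 2}, exploiting that the B\"ottcher coordinate $\phi_\infty$ conjugates $f$ on $U_\infty$ to $g$ on $\C\sm\overline\D$ and extends continuously to the surjection $\inv_\infty\: \T\to\Ju$ respecting the landing pairing of external rays. Given $\xi\in\Dyn(\Ju)$, fix a decomposition $\Ju=\bigcup_{i=1}^N J_i$ into finitely many Julia arcs meeting only at global cut-points, with each $\xi|_{J_i}$ extending to some $\psi_i=f^{-m_i}\circ f^{n_i}\in\Ups_f$ on a neighborhood $W_i$. By Lemma~\ref{homeos}, $\xi$ induces a homeomorphism $\xi_\infty\: \T\to\T$ satisfying $\inv_\infty\circ\xi_\infty=\xi\circ\inv_\infty$ and preserving the basilica lamination. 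Global cut-points of $\Ju$ are iterated $f$-preimages of $\alpha$, and their external angles lie in $D_\bal=\bigcup_n g^{-n}\{1/3,2/3\}$. Let $I_i\subset\T$ be the arc corresponding to $J_i$ via $\inv_\infty$; the $I_i$ tile $\T$ with endpoints in $D_\bal$. On each $I_i$, the semi-conjugation together with $\phi_\infty\circ f=g\circ\phi_\infty$ forces $\xi_\infty|_{I_i}=g^{-m_i}\circ g^{n_i}\in\Ups_g$, which is linear with slope $2^{n_i-m_i}$ in the angular coordinate. Hence $\xi_\infty\in T_\bal$, and lamination preservation gives $\xi_\infty\in T_\bal^b$.

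Next, one checks that $\xi\mapsto\xi_\infty$ is a group homomorphism: both $(\xi_1\circ\xi_2)_\infty$ and $\xi_{1,\infty}\circ\xi_{2,\infty}$ satisfy the semi-conjugation relation with $\xi_1\circ\xi_2$ through $\inv_\infty$, and such an inducing circle map is unique because $\inv_\infty\: \T\to\Ju$ is surjective (the Julia set being locally connected for the hyperbolic $f$). The same surjectivity yields injectivity: if $\xi_\infty=\id$, then $\xi\circ\inv_\infty=\inv_\infty$, so $\xi=\id$ on $\Ju$.

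For surjectivity, let $\sigma\in T_\bal^b$. By Corollary~\ref{Thompson and dynamics 2}, on each linearity arc $I_i$ one has $\sigma|_{I_i}=g^{-m_i}\circ g^{n_i}\in\Ups_g$, with endpoints of $I_i$ in $D_\bal$. Since $\sigma$ preserves the lamination, Lemma~\ref{homeos} produces a homeomorphism $\xi$ of $U_\infty\cup\Ju$ with $\xi_\infty=\sigma$; restricting to $\Ju$ gives a self-homeomorphism, and on each corresponding Julia arc $J_i:=\inv_\infty(I_i)$ the B\"ottcher conjugacy transports $g^{-m_i}\circ g^{n_i}$ to the element $f^{-m_i}\circ f^{n_i}\in\Ups_f$, which provides the required local extension. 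The endpoints of $J_i$ are $f$-preimages of $\alpha$, hence global cut-points, so $\xi\in\Dyn(\Ju)$. The only subtle point I expect is selecting the branches of $g^{-m_i}$ and $f^{-m_i}$ compatibly along adjacent arcs, but this is automatic once a basepoint correspondence is fixed via the semi-conjugation and the fact that $\phi_\infty$ intertwines the two dynamical pseudo-groups $\Ups_g$ and $\Ups_f$; the remaining verifications are essentially bookkeeping.
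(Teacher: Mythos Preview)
Your proof is correct and follows essentially the same approach as the paper: both use Lemma~\ref{homeos} together with the B\"ottcher conjugacy $\phi_\infty$ (which intertwines $\Ups_f$ and $\Ups_g$) for the forward direction, and Corollary~\ref{Thompson and dynamics 2} with the converse of Lemma~\ref{homeos} for surjectivity, while the semi-conjugation $\inv_\infty\circ\xi_\infty=\xi\circ\inv_\infty$ handles the homomorphism and injectivity claims. The only minor slip is that you invoke Lemma~\ref{homeos} before explicitly assembling the local extensions $\psi_i\in\Ups_f$ into a global homeomorphism of $U_\infty\cup\Ju$ (the paper states this extension as the first sentence of its proof); since you have all the ingredients in hand, this is just a matter of ordering.
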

%
%
%
%
%
%
%
\noindent
{\emph{Proof.}}
It follows from the definition that any piecewise dynamical homeomorphism $\xi$ of $\Ju$ has a homeomorphic extension to $\overline{U_\infty}$, and therefore, by Lemma~\ref{homeos}, the induced  homeomorphism $\xi_\infty$ of $\T$ preserves the basilica lamination. The rest of the properties of $\xi_\infty$ follow from the assumption that $\xi$ is piecewise dynamical.

Conversely, any such map $\xi_\infty$ extends to a homeomorphism of $\C\setminus\D$ and hence descends  to a homeomorphism $\xi$ of $\Ju$ by Lemma~\ref{homeos}. The homeomorphism $\xi$ has to be piecewise dynamical because such is $\xi_\infty$ according to Corollary~\ref{Thompson and dynamics 2}.
%

The statement about the map $\xi\mapsto\xi_\infty$ being an isomorphism follows immediately from the relation 
$$
\inv_\infty \circ\xi_\infty=\xi\circ\inv_\infty .
$$
\qed

\medskip

One element of $T$ that will be useful in what follows is the rotation $\rho(z)=-z$. It is given by $f^{-2}\circ f^2$, where the branch of $f^{-2}$ is chosen so that $f^{-2}(\alpha)=-\alpha$.

\section{Inversion of $\Ju$}\label{S:Inv}
\no
In this section we define an inversion $\iota$ of $\Ju$ that along with the action of the Thompson group $T$ on $\Ju$  generates a group whose elements approximate every quasisymmetric self-map of $\Ju$ quantitatively. We recall that $\Ju_\alpha=\dee\KK_\alpha$, where $\KK_\alpha$ is the limb rooted at $\alpha$ that contains the boundary $\dee U_{-1}$ of  the bounded Fatou component $U_{-1}$ containing $-1$. Also, $\Ju_0=(\Ju\setminus\Ju_\alpha)\cup\{\alpha\}$.
 
\begin{lemma}\label{lem:inv}
There exists a quasisymmetric homeomorphism $\iota$ of $\Ju$ that fixes $\alpha$, 
interchanges $\Ju_\alpha$ and $\Ju_0$, and satisfies
$$
\iota^2=\id.
$$
Moreover, $\iota$ has a  global quasiconformal extension to $\C$
 and respects the B\"ottcher coordinates of all  bounded Fatou components.
\end{lemma}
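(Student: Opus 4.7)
The plan mirrors the construction of the Thompson group action in Lemma~\ref{lem:thom}: first define $\iota$ on $\overline{U_0}\cup\overline{U_{-1}}$ via B\"ottcher coordinates, then propagate it inductively along the tree $\TT$, and finally extend across $U_\infty$ via the Ahlfors--Beurling extension of the induced boundary map on $\T$.

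The core step is to set $\iota|_{\overline{U_{-1}}}:= f|_{\overline{U_{-1}}}$, which is a conformal isomorphism onto $\overline{U_0}$ (as $f$ restricts to a univalent map $U_{-1}\to U_0$ between Jordan discs), and to take $\iota|_{\overline{U_0}}$ as its inverse. The relation $\phi_{-1}=\phi_0\circ f$ shows that both pieces act as the identity in the corresponding B\"ottcher coordinates, that they agree at the common boundary point $\alpha$ (since $\phi_0(\alpha)=\phi_{-1}(\alpha)=1$), and that $\iota^2=\id$ on $\overline{U_0}\cup\overline{U_{-1}}$. In particular $\iota$ sends the dyadic points of $\dee U_0$ of each level to the dyadic points of $\dee U_{-1}$ of the same level.

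To propagate $\iota$, I use induction on the combinatorial distance from a vertex to the main edge $E_0$. Suppose $\iota$ is already defined on a parent vertex $W$ as a B\"ottcher-respecting homeomorphism onto $\tilde W=\iota(W)$. By Lemma~\ref{L:Basepoint}, every satellite $V$ of $W$ has its root at a dyadic point $d\in\dee W$; the image $\tilde d:=\iota(d)\in\dee\tilde W$ is a dyadic point of the same B\"ottcher level and is therefore the root of a uniquely determined satellite $\tilde V$ of $\tilde W$. I set $\iota\colon\overline V\to\overline{\tilde V}$ to be the B\"ottcher identification $\phi_{\tilde V}^{-1}\circ\phi_V$; this matches the value at $d$ since $\phi_V(d)=\phi_{\tilde V}(\tilde d)=1$. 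The symmetry of the construction makes the resulting map on $\bigcup\overline V$ an involution that respects the B\"ottcher coordinate of every bounded Fatou component.

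To extend $\iota$ to $\C$, I follow Lemma~\ref{lem:thom}. On each bounded Fatou component, the map equals $\phi_{\tilde V}^{-1}\circ\phi_V = f^{-\tilde n}\circ f^n$ for the appropriate branch (where $n,\tilde n$ are the dynamical distances of $V,\tilde V$ to $U_0$), so it lies in the pseudo-group $\Upsilon_f$ and is conformal. A direct computation shows that $\iota|_{\Ju_\alpha}=f|_{\Ju_\alpha}\colon\Ju_\alpha\to\Ju_0$ (since $f$ is a conformal isomorphism of the wake $S_\alpha$ onto $S_0$, consistent with the B\"ottcher propagation on the $\Ju_\alpha$-side of the tree), and symmetrically $\iota|_{\Ju_0}$ is its inverse branch. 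Translated through the outer B\"ottcher coordinate $\psi_\infty$, the induced boundary map $\iota_\infty\colon\T\to\T$ is the doubling $g$ of slope $2$ on the arc $[1/3,2/3]$ and its slope-$1/2$ inverse branch on $[-1/3,1/3]$, hence bi-Lipschitz; it also preserves the basilica lamination of $\T$ (limbs map to limbs). Ahlfors--Beurling then gives a quasiconformal extension of $\iota_\infty$ to $\C\sm\overline\D$, which by Theorem~\ref{model thm} descends to a quasiconformal extension to $U_\infty$. Gluing this with the conformal pieces on the bounded Fatou components and invoking removability of $\Ju$ for quasiconformal maps (as in the proof of Lemma~\ref{lem:thom}) produces the required global quasiconformal $\iota$ on $\C$. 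The main obstacle is verifying the simplified description $\iota|_{\Ju_\alpha}=f|_{\Ju_\alpha}$ and deducing the piecewise linear form of $\iota_\infty$; the remaining steps reuse the combinatorial and lamination bookkeeping from Sections~\ref{S:Dyad}--\ref{S:TG}.
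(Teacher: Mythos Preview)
Your proof is correct and ultimately coincides with the paper's construction, but you take a detour that the paper avoids. The paper \emph{defines} $\iota$ directly by $\iota=f$ on $\Ju_\alpha$ and $\iota=f^{-1}$ on $\Ju_0$ (with the branch of $f^{-1}$ fixing $\alpha$), and then checks the required properties. You instead build $\iota$ by B\"ottcher propagation along the tree $\TT$ and only afterward verify that the result agrees with $f$ on $\Ju_\alpha$; you even flag this verification as ``the main obstacle.'' But that obstacle disappears if you reverse the logic: take $\iota=f$ on $\Ju_\alpha$ as the definition, note that $f\colon S_\alpha\to S_0$ is a conformal isomorphism (the critical point $0$ lies in $S_0$, not $S_\alpha$), and read off immediately that $\iota$ is continuous on $\Ju$, that $\iota^2=\id$, and that $\iota$ respects B\"ottcher coordinates (since $\phi_V=\phi_{f(V)}\circ f$ for every bounded $V\subset\KK_\alpha$). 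Your tree induction is then a consequence rather than a construction. The remaining steps---the piecewise-linear form of $\iota_\infty$ on $\T$, the Ahlfors--Beurling extension to $U_\infty$, and quasiconformal removability of $\Ju$---are identical in both arguments.
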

\noindent
\emph{Proof.}
We define $\iota$ by $\iota=f$ on  $\Ju_\alpha$, and $\iota=f^{-1}$ on $\Ju_0$. Here we choose the branch of $f^{-1}$ that fixes $\alpha$.  We prove that so defined $\iota$ is quasisymmetric by showing that it extends to a quasiconformal map of $\C$. 

In each bounded component of the Fatou set whose closure intersects $\Ju_\alpha$ we extend $\iota$ by $\iota=f$, and in each such component whose closure intersects $\Ju_0$  we extend it by $\iota=f^{-1}$. In particular, it is immediate that if $U$ is any bounded Fatou component and $\dee \tilde U=\iota(\dee U)$, then this extension of $\iota$  respects the B\"ottcher coordinates of $U$ and $\tilde U$. 

The map $\iota$ has obvious conformal extensions to both wakes in $\C$ bounded by the two external rays in $U_\infty$ landing  at $\alpha$.
These extensions are conjugate by $\phi_{\infty}$ to the maps $g(z)=z^2$ for $1/3\le e(\theta)\le 2/3$ and $z\mapsto -\sqrt{z}$ for $-1/3\le e(\theta)\le1/3$, respectively, where the principal branch of the square root is selected. 
We can use the Ahlfors--Beurling extension~\cite{BA} to extend this piecewise linear map to a quasiconformal map of $\C\setminus{\overline\D}$ onto itself. This quasiconformal extension is conjugated back by $\phi_\infty$ to a quasiconformal map of $U_\infty$ onto itself that agrees with $\iota$ on the boundary of $U_\infty$. 

Combining the above quasiconformal extension of $\iota$ with the conformal extensions into bounded Fatou components, we obtain a homeomorphism of $\C$ that is quasiconformal on the Fatou set. But the Julia set of a postcritically finite polynomial is removable for quasiconformal maps~\cite{Jo}, and so we get a quasiconformal map of $\C$. Such a map is quasisymmetric, and hence $\iota$ is quasisymmetric on $\Ju$.  
\qed

\medskip

We call the homeomorphism $\iota$ the \emph{inversion} with respect to $\alpha$. 
The composition $\sigma = \rho\circ\iota$ with the rotation $\rho$, given by $\rho(z)=-z$,  
acts as a \emph{shift} to the right by one on the infinite chain $\mathcal C$ of bounded Fatou components 
that intersect the real line. This follows from the fact that  $\sigma$ preserves $\mathcal C$ 
and takes $\dee U_{-1}$ to $\dee U_0$. 
Moreover, $\sigma$ respects the B\"ottcher coordinates of all bounded Fatou components, 
except that $\sigma\: U_{-1} \ra U_0$ is $ 180^\circ$-rotation in the B\"ottcher coordinates. 
(In general, we will say that  a map $h\: U\ra V$ between bounded Fatou components 
\emph{rotates} the B\"ottcher coordinate if
$\phi_U\circ h\circ \psi_V$ is a rotation of the disk $\D$.)  
  
These remarks allow us to derive the following property:

\begin{lemma}\label{L:Trans}
The group $\hat T$ generated by the Thompson group $T$ and $\iota$ acts transitively on the vertices of $\TT$.
Moreover, let $U$ be an arbitrary bounded Fatou component,
and let $p$ be the shortest combinatorial chain in $\TT$ that joins $U_0$ to $U$. Then
there exists $t\in\hat T$ 
with the following properties:

\smallskip\noindent 
{\rm (i)} $t(U)=U_0$ and $t$ respects the B\"ottcher coordinate;

\smallskip\noindent 
{\rm (ii)} for any bounded Fatou component $V\notin p$, the map $t\:  V\ra t(V)$ 
respects the B\"ottcher coordinate;

\smallskip\noindent
{\rm (iii)} for any bounded Fatou component $V\in p$,  the map $t\:  V\ra t(V)$
rotates the B\"ottcher coordinate by some dyadic angle.

\end{lemma}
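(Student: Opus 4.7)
\medskip

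\emph{Proof plan.}
The approach is induction on $n=\de(U)$. The base case $n=0$ is handled by $t=\mathrm{id}$.

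For the inductive step, let $V_1\in p$ be the first non-trivial vertex, attached to $U_0$ at the dyadic point $d_1$ whose $\phi_0$-B\"ottcher angle is a dyadic $\theta_1\in\T$. The key tool is the rigid rotation $R_{-\theta_1}\colon z\mapsto e(-\theta_1)\,z$. It belongs to $T$ (it is linear with slope $1$ and preserves dyadic points, since $\theta_1$ is dyadic) and, in contrast to a generic Thompson element sending $d_1$ to $\alpha$, it admits an extension to $U_0$ that is an honest rotation by $-\theta_1$ in $\phi_0$. Setting $t_1:=\iota\circ R_{-\theta_1}\in\hat T$, one checks that $t_1(V_1)=U_0$ and that the graph automorphism of $\TT$ induced by $t_1$ sends $p$ to the shortest chain $p'=(U_0,\,t_1(V_2),\dots,t_1(V_n))$ from $U_0$ to $t_1(V_n)$, of length $n-1$. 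The inductive hypothesis applied to $t_1(V_n)$ along $p'$ yields $t'\in\hat T$; the candidate is $t:=t'\circ t_1$.

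To verify (i)--(iii), I would track the B\"ottcher action of each of the three factors $R_{-\theta_1}$, $\iota$, $t'$ on each bounded Fatou component. The relevant inputs are: every element of $T$ respects the B\"ottcher coordinate of every \emph{non-central} bounded Fatou component (Lemma~\ref{lem:thom}); $\iota$ respects the B\"ottcher coordinate of \emph{every} bounded Fatou component (Lemma~\ref{lem:inv}); and our chosen extension of $R_{-\theta_1}$ acts on $U_0$ as a dyadic rotation. For $V\notin p$, the key combinatorial check is that $t_1(V)\notin p'$ (using $V\ne V_k$ for every $k$), so the inductive~(ii) for $t'$ combined with the above makes each factor the identity in B\"ottcher on the relevant component, proving~(ii). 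For $V=V_k\in p$, one splits into the subcases $k=0$, $k=1$, $k\ge 2$: in each case exactly one of the three factors contributes a dyadic rotation---either $R_{-\theta_1}|_{U_0}$ by angle $-\theta_1$ (when $V_k=U_0$, using that $U_{-1}\notin p'$ so $t'$ is B\"ottcher-trivial there), or the inductive $t'|_{p'_j}$ for the appropriate vertex $p'_j$ on $p'$ (by inductive~(iii))---while the remaining factors are B\"ottcher identities, so the composition is again a dyadic rotation, giving~(iii). Part~(i) for $V_n=U$ is the special subcase where the inductive~(i) applied to $t'$ at the terminal vertex of $p'$ forces even this last rotation to be the identity.

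The main technical obstacle is the B\"ottcher bookkeeping. It is essential that $R_{-\theta_1}$ be a \emph{rigid} rotation rather than a generic Thompson element moving $d_1$ to $\alpha$: the latter would extend to $U_0$ as some unspecified quasiconformal map (e.g., via Ahlfors--Beurling), generically not a rotation, and property~(iii) at $V_0=U_0$ would fail. The rigidity of $R_{-\theta_1}$, together with the B\"ottcher-triviality of $\iota$ and of the non-central part of the $T$-action, is exactly what keeps the composition a dyadic rotation on each vertex of the chain while remaining the identity off it; the fact that $t_1$ bijects $p$ onto $p'$ (and hence its complement onto the complement of $p'$) is what prevents the rotation on $U_0$ from leaking into~(ii).
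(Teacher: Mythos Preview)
Your argument is correct and follows essentially the same inductive strategy as the paper: rotate in $T$ by a dyadic angle to bring the first satellite $V_1$ to $U_{-1}$, then swap $U_{-1}$ with $U_0$, and recurse on the shorter chain. The only difference is that you use $\iota$ directly where the paper uses the shift $\sigma=\rho\circ\iota$; since $\iota$ respects the B\"ottcher coordinate on every bounded component while $\sigma$ introduces a $180^\circ$ twist on $U_{-1}\to U_0$, your bookkeeping comes out slightly cleaner and you avoid the paper's final corrective rotation~$\gamma$ needed to secure property~(i).
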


\noindent
{\emph{Proof.}}
%
Suppose that $p=\{U_0, U_1,\dots, U_k=U\}$. 
We will argue by induction on $k$. The base $k=0$ is obvious.
In general,  
the  component $U_1$ touches $U_0$, and so the base point of $U_1$ corresponds to a dyadic point on $\dee U_0$. The group $T$ 
contains rotations by any dyadic angle, and hence 
acts transitively on the dyadic points of $\dee U_0$. Thus there exists an element $t_1\in T$ such that $t_1(U_1)= U_{-1}$.  The elements of $T$ preserve the combinatorial distance on $\TT$, and therefore $t_1(U)$ has the same combinatorial distance $k$ to $U_0$ as $U$. Now we apply the shift $\sigma$  
to take  $U_{-1}$    
to $U_0$. 
The combinatorial distance from $\sigma \circ t_1(U)$ to $U_0$ is $k-1$, and we can apply the inductive hypothesis.
Let $t_2\in \hat T$ be an element given by the inductive hypothesis that takes $\sigma\circ t_1(U)$ to $U_0$.

The map $t'= t_2\circ\sigma\circ t_1\in \hat T $ takes $U$ to $U_0$. 
Moreover, $\sigma\circ t_1$ respects the B\"ottcher coordinates of all non-central  bounded Fatou components
except $U_1$, on which it rotates the coordinate by $180^\circ$.
On $U_0$ itself, it rotates the B\"ottcher 
coordinate by some dyadic angle. 
 By the inductive assumption,
$t' \in \hat T $ respects the B\"ottcher coordinate for all $V\notin p$
and rotates it for all $V\in p$ by some dyadic angles. 
%
%
Finally, 
by postcomposing $t'$ with a rotation $\gamma$  in $T$, 
we can ensure   
that  $t= \gamma\circ t'\: U\ra U_0$ respects the B\"ottcher coordinate as well.
\qed

\medskip

\section{Proof of Theorem~\ref{thm:thom}}\label{S:Proof}
\no
Let $n\in N$ be arbitrary and let $\Pi_n=\{J_k,\ k=1,2,\dots, 2^{n+1}\}$ be the partition of $\Ju$ by Julia arcs described in Section~\ref{S:Lam}. We replace the map $\xi$ restricted to each $J_k$ by a piecewise dynamical map as follows.

\begin{lemma}\label{L:ConfElev}
Let $\xi$ be a topologically extendable $\eta$-quasisymmetric map of $\Ju$. Then there exist $N\in \N_0$ and a finite family $\mathcal F$ of Julia arcs that depend only on $\eta$ and have the following property.
For all $n\ge N$,
for any $J\in\Pi_n$, if $\tilde J=\xi(J)$, then there exists $M\in\N_0$ with  $\tilde\Lambda=f^M(\tilde J)\in\mathcal F$. Moreover, $\tilde\Lambda$ is the closure of a connected component of $\Ju\setminus\{\tilde z_1,\tilde z_2\}$ for some global cut-points $\tilde z_1, \tilde z_2\in\dee U_0$, and  
$f^{-M}$ is conformal in a neighborhood of $\tilde\Lambda$. 
\end{lemma}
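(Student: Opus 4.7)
The plan is to apply a carefully chosen iterate $f^M$ to dilate $\tilde J$ until it becomes a Julia arc of one of finitely many standard combinatorial types, while keeping $f^{-M}$ conformal on a neighborhood of the image.

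First, by Lemma~\ref{L:EmbQS}, $\xi$ descends via $\inv_\infty$ to an $\eta'$-quasisymmetric self-map $\xi_\infty\colon \T \to \T$ with $\eta'$ depending only on $\eta$. For $J \in \Pi_n$, set $I = \inv_\infty^{-1}(J)$ and $\tilde I = \xi_\infty(I) = \inv_\infty^{-1}(\tilde J)$. By Lemma~\ref{L:OuterDyadSubdiv}, $|I| \le 2/(3 \cdot 2^n)$, so the uniform continuity of $\eta'$-quasisymmetries of $\T$ yields $|\tilde I| \to 0$ uniformly in $J$ and $\xi$ as $n \to \infty$. I would fix a small constant $c_0 = c_0(\eta) > 0$ (say $c_0 = 1/12$) and choose $N$ so that $|\tilde I| < c_0$ whenever $n \ge N$. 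Define $M = M(J) \in \N_0$ to be the least integer with $2^M |\tilde I| \ge c_0$. Then $|g^M(\tilde I)| \in [c_0, 2c_0] \subset [0, 1/2)$, so $g^M$ is injective on $\tilde I$, and $f^M$ is injective on $\tilde J$. Put $\tilde \Lambda := f^M(\tilde J)$.

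The core claim is that $\tilde \Lambda$ assumes only finitely many values as $J$, $n$, and $\xi$ (within the $\eta$-distortion class) vary, thus giving $\mathcal F$. Note that $g^n$ is injective on $I$ by Lemma~\ref{L:OuterDyadSubdiv}, mapping it onto one of the two standard arcs in $P_0$ of length $1/3$ or $2/3$. Consider the composition
\[
\Phi := g^M \circ \xi_\infty \circ (g^n|_I)^{-1} \colon g^n(I) \longrightarrow g^M(\tilde I).
\]
The doubling maps $(g^n|_I)^{-1}$ and $g^M$ contribute only uniform bi-Lipschitz distortion in the angular coordinate, so $\Phi$ is $\eta''$-quasisymmetric with $\eta'' = \eta''(\eta)$. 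Since its domain is one of two fixed arcs with endpoints in $D_0$, its target has length in $[c_0, 2c_0]$, and $\xi_\infty$ sends $D_\infty$ into $D_\infty$ (so $\Phi$ sends $g^n(I) \cap D_\infty$ into $g^M(\tilde I) \cap D_\infty$), a pre-compactness argument for uniformly quasisymmetric maps of Arzel\`a--Ascoli type forces the endpoints of $g^M(\tilde I)$ to lie in $D_K$ for some $K = K(\eta)$.

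After at most $K$ additional applications of $f$ (absorbed into $M$), the endpoints $\tilde z_1, \tilde z_2$ of $\tilde \Lambda$ can be arranged to lie on $\dee U_0$ as dyadic cut-points of bounded B\"ottcher level. I then define $\mathcal F$ as the finite collection of closures of connected components of $\Ju \setminus \{\tilde z_1, \tilde z_2\}$, taken over admissible pairs $(\tilde z_1,\tilde z_2)$, that do \emph{not} contain the full boundary $\dee U_0$ or $\dee U_{-1}$, and hence do not surround $0$ or $-1$. Each element of $\mathcal F$ admits a simply connected neighborhood in $\C$ disjoint from the postcritical set $\{0, -1\}$, on which the single-valued branch of $f^{-M}$ sending $\tilde\Lambda$ to $\tilde J$ is well-defined and conformal. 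The main technical obstacle is the finiteness claim above---extracting a uniform combinatorial bound on endpoint levels from the analytic quasisymmetry of $\Phi$; the natural route is by contradiction, extracting a subsequential quasisymmetric limit of $\Phi$'s along which endpoint levels would diverge and using rigidity of the countable set $D_\infty \subset \T$ to derive a contradiction.
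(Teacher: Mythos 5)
Your overall skeleton (expand $\tilde J$ by an iterate until it has definite size, then argue that only finitely many models can occur) matches the paper's strategy, but the step you yourself flag as ``the main technical obstacle'' is a genuine gap, and the route you sketch for it does not work. You try to bound the level of the endpoints of $g^M(\tilde I)$ using only two properties of $\xi_\infty$: uniform quasisymmetry on $\T$ and $\xi_\infty(D_\infty)\subseteq D_\infty$. First, the Arzel\`a--Ascoli scheme gives no contradiction: membership in the countable dense set $D_\infty$ is not preserved under uniform limits, so a subsequential limit of your maps $\Phi$ carries no trace of $D_\infty$, and there is no ``rigidity'' of $D_\infty$ to invoke --- on the contrary, the Thompson-like group $T_\bal$ supplies a wealth of nontrivial maps preserving it. Second, and more seriously, the statement you want is false under those two hypotheses alone: one can build uniformly bi-Lipschitz piecewise linear circle maps preserving $D_\infty$ (identity outside a small interval, a slope-$1$ ``translation'' piece exploiting the local self-similarity of $D_\infty$ at a high-level point, and bounded-slope buffers) which send a level-$0$ point to a point of arbitrarily high level while the image of the adjacent arc of $P_0$ keeps definite length; in the exactly analogous dyadic model the construction is completely explicit. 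So no soft compactness argument from quasisymmetry plus set-preservation of $D_\infty$ can yield your $D_K$ claim.

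What is missing is precisely the planar/lamination input that the paper uses for this step. In the paper, the two limbs attached at the endpoints $z_1,z_2$ of $J$ have diameters comparable to $\diam J$ (egg yolk principle for the inverse branch of $f^n$); the quasisymmetry of the \emph{planar} extension of $\xi$ (via \cite[Proposition~10.8]{He}) transports this comparability to $\tilde J$; and hyperbolic expansion by $f^M$ then makes the image limbs of definite diameter, so their roots --- the endpoints of $\tilde\Lambda$ --- lie in a finite set, which is where the finite family $\mathcal F$ and the bounded ``level'' come from. If you insist on working on the circle, the same mechanism appears as preservation of the basilica lamination, which you never use: the endpoint of $I$ is paired by the lamination with another angle at distance comparable to $|I|$, its image must be paired with the lamination-partner of $\xi_\infty$ of the endpoint, whose angular gap is $\asymp 2^{-\lambda}$ for a level-$\lambda$ cut point, and quasisymmetry comparing this gap with $|\tilde I|\asymp 2^{-M}$ forces $\lambda\le M+O_\eta(1)$; with that ingredient your argument can be repaired. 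Finally, note that the conformality of $f^{-M}$ on a neighborhood of $\tilde\Lambda$ and the relocation of $\tilde z_1,\tilde z_2$ to $\dee U_0$ by ``at most $K$ additional applications of $f$'' are asserted rather than proved; the paper handles the latter by observing that $\diam\tilde\Lambda\ge C$ forces the component $V$ carrying the endpoints to lie at bounded dynamical distance from $U_0$, and this argument (or some substitute) is needed to keep the extra iterates from destroying conformality of the inverse branch.
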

\no
{\emph Proof.}
We first note that $\xi$ has a $K$-quasiconformal extension to $\C$, where $K$ depends only on $\eta$. Indeed, it follows from Lemma~\ref{L:EmbQS} that the homeomorphism $\xi_\infty$ of $\T$ defined by  
$$
\psi_\infty\circ\xi_\infty=\xi\circ\psi_\infty
$$
has a $K$-quasiconformal extension to $\C\setminus\overline{\D}$, where $K$ depends only on $\eta$. Since $\phi_\infty$ is conformal in $U_\infty$, it implies that $\xi$ has an extension to $U_\infty$ as a $K$-quasiconformal homeomorphism. Likewise, the discussion preceding Lemma~\ref{L:EmbQS} implies that $\xi$ has $K$-quasiconformal extension to each bounded Fatou component, perhaps with a different $K$ but that depends only on $\eta$. By \cite{Jo}, the Julia set $\Ju$ is removable for quasiconformal maps, and therefore the claim follows.   

Assume that the Julia arc $J$ in the statement of the lemma is determined by adjacent external rays $\rho_1, \rho_2\in R_n$ with landing points $z_1, z_2$, respectively.
There are two cases to consider: $z_1=z_2$ and $z_1\neq z_2$.

If $z=z_1=z_2$, then $z$ is the root of a bounded Fatou component $U$ such that $\dee U\subseteq J$, and $J=J_z=\dee\KK_z$, where $\KK_z$ is the limb rooted at $z$. 
Since any homeomorphism of $\Ju$ preserves the set of global cut-points, $\xi(z)$ is such a point. Moreover, from~\cite[Proposition~10.8]{He} applied to the quasisymmetry  $\xi$ it follows that there exists $N\in\N_0$ that depends only on $K$ such that if $n\ge N$, then $J\in\Pi_n$ implies that  the Julia arc $\tilde J$ does not contain $\dee U_0$. Roughly speaking, \cite[Proposition~10.8]{He} states that if $A$ and $B$ are overlapping sets in a metric space with the diameter of $A$ being smaller than the diameter of $B$, then an application of a quasisymmetric map with controlled distortion cannot spoil this relationship between the diameters of the corresponding images of $A$ and $B$ too much, quantitatively. 
In particular, it follows from the proof of  Lemma~\ref{L:Basepoint} that $\xi(z)$ is the root of the bounded Fatou component $\tilde U$ such that $\dee\tilde U=\xi(\dee U)$. Let $M\in\N_0$ be the dynamical distance from $\tilde U$ to $U_0$. 
Then $\tilde\Lambda=f^M(\tilde J)=\Ju_0$, and the claim follows in this case, with $\mathcal F$ consisting of the single element $\Ju_0$. We have $\tilde z_1=\tilde z_2=\alpha$. The fact that $f^{-M}$ is conformal in a neighborhood of $\Ju_0$ follows from the assumption that $M$ is the dynamical distance from $\tilde U$ to $U_0$.

Now assume that $z_1\neq z_2$. We know from Lemma~\ref{L:Adj} that there exists a bounded Fatou component $U$ such that  $z_1, z_2\in\dee U$. As above, there exists $N\in\N_0$ that depends only on $K$ such that if $n\ge N$, then for $J\in\Pi_n$ we have that $\tilde J$ does not contain $\dee U_0$.
Let $\Ju_{z_1}=\dee\KK_{z_1}$ and $\Ju_{z_2}=\KK_{z_2}$, where $\KK_{z_1}, \KK_{z_2}$ are the two {limbs} that are attached to $J$ at $z_1$ and $z_2$, respectively. 

The egg yolk principle
applied to an appropriate branch of $f^{-n}$  implies that ${\rm diam} \Ju_{z_i}\ge c\, {\rm diam} J,\ i=1,2$, for some $c>0$ that does not depend on $n$ or $J$. 
Let $\tilde\Ju_i=\xi(\Ju_{z_i}),\ i=1,2$.
Since $\xi$ is quasisymmetric,  an application of~\cite[Proposition~10.8]{He} gives that ${\rm diam}\tilde\Ju_i\ge \tilde c\, {\rm diam} \tilde J,\ i=1,2$, where the constant $\tilde c$ depends only on $K$. Since $f$ is hyperbolic, there exist constants $C=C(f)>0$ and  $M\in\N_0$ with the following properties. First, for $\tilde\Lambda=f^M(\tilde J)$ we have ${\rm diam}(\tilde\Lambda)\ge C$. Then,
there exist a point $p\in \tilde\Lambda$ and $r>0$,  such that  $\tilde\Lambda$ is contained in the disc $B(p,r/4)$. Finally, the disc $B(p,r)$ does not contain 0 and 1, i.e., the postcritical points of $f$. In particular, $r\ge 2C$. Since there are only finitely many large limbs of $\Ju$, the claim about the existence of a finite family $\mathcal F$ would follow if we show that there is a lower bound depending only on $K$ for ${\rm diam}f^M(\tilde\Ju_i),\ i=1,2$.  If both $f^M(\tilde\Ju_1)$ and $f^M(\tilde\Ju_2)$, are not contained in $B(p,r/2)$, then 
$$
{\rm diam}f^M(\tilde\Ju_i)\ge r/4\ge C/2,\ i=1,2.  
$$
If $f^M(\tilde\Ju_i)$ is contained in $B(p,r/2)$ for some $i=1,2$, then the claim follows from the egg yolk principle applied to $f^{-M}$ in the disc $B(p,r)$, and the fact that ${\rm diam}\tilde\Ju_i\ge \tilde c\, {\rm diam} \tilde J,\ i=1,2$.

The claim that, in the case $z_1\neq z_2$, the map $f^{-M}$ is conformal in a neighborhood of $\tilde\Lambda$ follows from the assumption that $\tilde\Lambda$ is contained in $B(p,r/4)$ and $B(p,r)$ does not contain either 0 or 1.

Finally, $\tilde\Lambda$ is the closure of a connected component of $\Ju\setminus\{\tilde z_1,\tilde z_2\}$, where $\tilde z_1=f^M(\xi(z_1)), \tilde z_2=f^M(\xi(z_2))$, and these points belong to the boundary of a bounded Fatou component $V$. The component $V$ does not have to be $U_0$. However, since ${\rm diam}(\tilde \Lambda)\geq C=C(f)$, the dynamical distance $m\in\N_0$ from $V$ to $U_0$ depends only on $C$. The set $f^m(\tilde\Lambda)$ is a connected component of $\Ju\setminus\{f^m(\tilde z_1),f^m(\tilde z_2)\}$, and  $f^m(\tilde z_1),f^m(\tilde z_2)$ belong to $\dee U_0$. Moreover, if we replace each element  $\tilde\Lambda\in\mathcal F$ by $f^m(\tilde\Lambda)$, all the other properties will be unchanged since $m$ depends only on $C$, and hence only on $f$. This completes the proof. 
\qed

\medskip

\begin{lemma}\label{L:LargeScale}
Let $\Lambda=\Ju_0$, and  let 
$\tilde\Lambda$ be a Julia arc from the finite family $\mathcal F$ guaranteed by Lemma~\ref{L:ConfElev}. 
Then there exists a  piecewise dynamical homeomorphism $\mathcal T$ of $\Lambda\setminus\{\alpha\}$ 
onto $\tilde\Lambda\setminus\{z_1,z_2\}$.
\end{lemma}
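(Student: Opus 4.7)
The plan is to transport the problem to the external lamination on $\T$ via the inverse B\"ottcher coordinate $\psi_\infty$. Under this identification, $\Lambda \setminus \{\alpha\}$ corresponds to the open arc $I_0 := (-\tfrac{1}{3}, \tfrac{1}{3}) \subset \T$, while $\tilde \Lambda$ with its two (possibly coincident) boundary cut-points removed corresponds to an open arc $\tilde I \subset \T$ whose endpoints lie in $D_\bal := \bigcup_n D_n$. By the semiconjugacy $\psi_\infty \circ g = f \circ \psi_\infty$, a piecewise dynamical map of Julia arcs (in the sense of Section~\ref{S:TG}) corresponds to a homeomorphism $\tau \colon I_0 \to \tilde I$ that is piecewise in the pseudo-group $\Upsilon_g$ with break points in $D_\bal$ and that respects the basilica lamination restricted to $I_0$; compare Lemma~\ref{L:homeos2}. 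So the task is to build such a $\tau$.

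Since $\mathcal F$ is finite, $\tilde I$ ranges over finitely many arcs whose endpoints have uniformly bounded level in $D_\bal$, and I will induct on the maximal level $n$ of these endpoints. The base $n=0$ has two cases: $\tilde I = I_0$ (take $\tau = \mathrm{id}$), and $\tilde I = (\tfrac13, \tfrac23)$ (corresponding to $\Ju_\alpha$), for which $\tau$ is the two-piece map on $I_0$ given by the two inverse branches of $g$ whose images lie in $(\tfrac13, \tfrac23)$; this is precisely the circle-side description of the inversion $\iota$ from Lemma~\ref{lem:inv}. For the inductive step, I subdivide $\tilde I$ at finitely many points of $D_\bal$ in its interior so that $g$ is injective on each piece, then apply $g$ to each piece to obtain arcs with endpoints of level at most $n-1$. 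Invoking the inductive hypothesis on each of these simpler arcs and postcomposing with the corresponding inverse branch of $g$ produces piecewise-$\Upsilon_g$ maps whose concatenation, after a suitable subdivision of $I_0$, yields $\tau$.

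The principal difficulty is verifying that $\tau$ preserves the basilica lamination on $\T$; this is what ensures that the lift $\mathcal T$ is a well-defined continuous map between Julia arcs rather than one that tears apart pairs of identified angles. Since the lamination encodes which pairs of external rays co-land in $\Ju$, it is invariant under $g$ and under any local branch of $g^{-1}$. Consequently, each piece of $\tau$ preserves the lamination on its interior automatically, and the only substantive check is at the break points in $D_\bal$: I need the inverse branches on the two sides of a break to be chosen so that paired angles on one side map to paired angles on the other. This consistency is handled by the same branch-selection bookkeeping used in the proofs of Lemmas~\ref{lem:thom} and~\ref{lem:inv}: Lemma~\ref{L:Basepoint} identifies the bounded Fatou component whose boundary the relevant inverse branches are targeting, and one selects the branches that land on the same component from both sides of each break. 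Once this is verified, descending $\tau$ through $\psi_\infty$ delivers the required piecewise dynamical homeomorphism $\mathcal T$.
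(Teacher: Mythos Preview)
Your approach is genuinely different from the paper's, and as written it does not close.

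The paper works from the \emph{inside}, not the outside. Lemma~\ref{L:ConfElev} supplies the crucial fact that $\tilde z_1,\tilde z_2\in\partial U_0$, so $\tilde\Lambda\cap\partial U_0$ is an arc on $\partial U_0$ with dyadic endpoints in the \emph{inner} B\"ottcher coordinate $\phi_0$. Under $\phi_0$, $\Lambda\cap(\partial U_0\setminus\{\alpha\})$ becomes $I_0=\T\setminus\{1\}$ and $\tilde\Lambda\cap(\partial U_0\setminus\{\tilde z_1,\tilde z_2\})$ becomes a dyadic arc $I_1$. Lemma~\ref{L:LargeScaleAppr} then gives a piecewise linear $\tau\colon I_0\to I_1$ with dyadic breaks and power-of-two slopes, and the wake-extension mechanism of Lemma~\ref{lem:thom} (which attaches each limb to its dyadic root on $\partial U_0$) promotes $\tau$ to the desired piecewise dynamical $\mathcal T$. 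No lamination bookkeeping on the external circle is needed, because the inner construction already respects roots of limbs.

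Your induction on the external side has a structural gap. In the inductive step you obtain, for each piece $\tilde I_j$ of $\tilde I$, a lamination-preserving map $g^{-1}\circ\tau_j\colon I_0\to\tilde I_j$ whose domain is all of $I_0$. To ``concatenate after a suitable subdivision of $I_0$'' you would need, for each piece $I_0^j$ of that subdivision, a lamination-preserving piecewise-$\Upsilon_g$ bijection $I_0^j\to I_0$ to precompose with; you never produce such maps, and your inductive hypothesis (which fixes the source as $I_0$) does not provide them. Simply restricting $g^{-1}\circ\tau_j$ to $I_0^j$ does not hit all of $\tilde I_j$. To make an external-coordinate argument work you would have to strengthen the induction to allow arbitrary $D_{\boldsymbol\alpha}$-ended arcs as both source and target, and then the lamination-compatibility check at the newly introduced break points (which you defer to ``the same bookkeeping as Lemmas~\ref{lem:thom} and~\ref{lem:inv}'') becomes a genuine argument, not a citation. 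The paper sidesteps all of this by using the inner coordinate and the hypothesis $\tilde z_1,\tilde z_2\in\partial U_0$, which you do not use at all.

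A smaller point: your base case for $\tilde I=(\tfrac13,\tfrac23)$ is misdescribed. In the external angular coordinate, $\iota$ restricted to $(-\tfrac13,\tfrac13)$ is the single affine map $\theta\mapsto\theta/2+1/2$ (equivalently $z\mapsto-\sqrt z$), i.e.\ one branch of $g^{-1}$, not two.
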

\noindent
\emph{Proof.}
If, in the notation of Lemma~\ref{L:ConfElev}, $\tilde z_1=\tilde z_2$, the present lemma is trivial. Otherwise,
the intersection of $\tilde\Lambda$ with $\dee U_0$ is an arc with dyadic endpoints. In the B\"ottcher coordinate of $U_0$ the intersection $\Lambda\cap\left(\dee U_0\setminus\{\alpha\}\right)$ corresponds to the open arc $I_0=\T\setminus\{1\}$, and the intersection $\tilde\Lambda\cap\left(\dee U_0\setminus\{\tilde z_1, \tilde z_2\}\right)$
corresponds to an open arc $I_1$ on $\T$. Thus we can apply Lemma~\ref{L:LargeScaleAppr} to get a piecewise linear map $\tau$  from $I_0$ onto $I_1$ with only dyadic break points  and with slopes that are integer powers of $2$.
The map $\tau$ corresponds to a piecewise dynamical map $\mathcal T$ from $\Lambda$ onto $\tilde\Lambda$ as described in Section~\ref{S:TG}, specifically in Lemma~\ref{lem:thom}.
\qed

\medskip

Let $\Lambda=\Ju_0$.  
For $n$ large as in Lemma~\ref{L:ConfElev}, 
we consider a tiling 
of $\Ju$ into the  Julia arcs $J_i,\ i=1,2,\dots, 2^{n+1}$, of $\Pi_n$. 
For each $i=1,2,\dots, 2^{n+1}$, there exists $n_i\in \{ n, n+1\}$ such that 
$f^{n_i}$  
maps $J_i$ onto $\Lambda$.
Let $\tl J_i = \xi(J_i) $, and  $\tilde\Lambda_i=f^{M_i} (\tl J_i)$, 
where $M_i$ is the constant from Lemma~\ref{L:ConfElev} corresponding to $\tl J_i $. 
Now, let us replace the map $\xi$ on $J_i\setminus\{z_1^i, z_2^i\}$, where $z_1^i, z_2^i$ are the endpoints of $J_i$,
 by the map  $\tau_i=f^{-M_i}\circ \mathcal T_i\circ f^{n_i}$, where 
$\mathcal T_i$ is the piecewise dynamical map from $\Lambda\setminus\{\alpha\}$ 
onto $\inter \tl\Lambda_i$ that comes from Lemma~\ref{L:LargeScale}.   
Pasting these $2^{n+1}$ maps together,
we obtain a global piecewise dynamical map $\tau\in\Dyn(\Ju)$.

We next prove that $\tau$ is quasisymmetric on $\Ju$ with a  controlled distortion function $\eta$ by showing that it has a quasiconformal extension to the whole plane with a controlled dilatation $K$.  We do this by showing that $\tau$ can be quasiconformally extended into each bounded Fatou component and into the basin at infinity, so that the dilatation is controlled. 
Let $U$ be an arbitrary bounded Fatou component of $f$. If the dynamical distance $m$ from $U$ to $U_0$ is greater than $\lfloor (n+1)/2\rfloor$, the map $\tau$ has a conformal extension into $U$. 
Indeed, it is defined in $\dee U$ as a single element of the pseudo-group $\Ups_f$. 
Assume now that $m$ is at most $n$. Then the boundary of $U$ is partitioned by its intersection with the elements from $\Pi_n$ into the dyadic intervals of level ${\lfloor (n+1)/2\rfloor -m}$.  Let $t$ be the piecewise dynamical map of $\dee U$ that is the restriction of $\tau$ to $\dee U$.  It 
agrees with $\xi$ at the endpoints of the dyadic intervals above.  
Now we need 
two more lemmas concerning circle maps.

For a homeomorphism $\xi$ of $\T$ and a finite set $E\subset\T$ we denote by $\xi_E$ the linear interpolation of the restriction $\xi|_E$ of $\xi$ to $E$.

\begin{lemma}\label{L:Main}
For any distortion homeomorphism $\eta$,
 there exists a distortion homeomorphism $\eta'$ with the following property. If $\xi$  is an $\eta$-quasisymmetric map of the unit circle $\T$ and $E$ is a finite subset of $\T$ such that all the complementary intervals of $E$ in $\T$ have the same angular length, then $\xi_E$ is $\eta'$-quasisymmetric.
\end{lemma}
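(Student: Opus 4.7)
I would reduce, via the standard equivalence between quasisymmetry and weak quasisymmetry on the doubling connected space $\T$ (see e.g.\ \cite[Ch.~10]{He}), to establishing a uniform bound
\[
\frac{|\xi_E(p)-\xi_E(q)|}{|\xi_E(q)-\xi_E(r)|}\le M=M(\eta)
\]
for all triples $p,q,r\in\T$ in cyclic order with $|p-q|=|q-r|$. Granting this, $\xi_E$ is $\eta'$-quasisymmetric with $\eta'$ depending only on $M$, hence only on $\eta$.

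Enumerate $E=\{p_0,\dots,p_{N-1}\}$ cyclically, set $I_i=[p_i,p_{i+1}]$, $\ell=|I_i|=1/N$, and let $\lambda_i=|\xi(I_i)|/\ell$ be the constant slope of $\xi_E$ on $I_i$. The first ingredient is the slope comparison
\[
\tfrac{1}{\eta(1)}\le\lambda_i/\lambda_{i+1}\le\eta(1),
\]
which is the $\eta$-quasisymmetry of $\xi$ applied at the common endpoint of the equal-length arcs $I_i$, $I_{i+1}$. The second is the explicit image-length formula: for $a\in I_j$ and $b\in I_k$ with $j\le k$,
\[
|\xi_E([a,b])|=\lambda_j(p_{j+1}-a)+|\xi(p_{j+1})-\xi(p_k)|+\lambda_k(b-p_k),
\]
which immediately yields the sandwich
\[
|\xi(p_{j+1})-\xi(p_k)|\le|\xi_E([a,b])|\le|\xi(p_j)-\xi(p_{k+1})|,
\]
in which the outer $E$-arc exceeds the inner one in length by exactly $2\ell$.

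I would then split into two regimes according to the scale $s=|p-q|=|q-r|$. In the small-scale regime $s\le 3\ell$ the arc $[p,r]$ meets only $O(1)$ of the intervals $I_i$, and iterating the slope comparison shows that both $|\xi_E([p,q])|$ and $|\xi_E([q,r])|$ lie within a factor depending only on $\eta(1)$ of $\lambda s$ for any slope $\lambda$ near $q$, giving the bound. In the large-scale regime $s>3\ell$, writing $p\in I_j$, $q\in I_k$, $r\in I_{k'}$, the sandwich bounds $|\xi_E([p,q])|$ above by the $\xi$-length of the $E$-arc $[p_j,p_{k+1}]$ of $\T$-length at most $s+2\ell\le\tfrac{5}{3}s$, and $|\xi_E([q,r])|$ below by the $\xi$-length of the $E$-arc $[p_{k+1},p_{k'}]$ of $\T$-length at least $s-2\ell\ge\tfrac{1}{3}s$. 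These two $E$-arcs share the endpoint $p_{k+1}$ and their $\T$-length ratio is at most $5$, so the $\eta$-quasisymmetry of $\xi$ applied at $p_{k+1}$ bounds the ratio of their $\xi$-lengths by $\eta(5)$; swapping the roles of $(p,q)$ and $(q,r)$ gives the matching reverse inequality.

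The main obstacle will be purely bookkeeping: tracking constants through the case analysis to verify that they depend only on $\eta$ and not on $N=|E|$. The conceptual point that makes this possible is that the $\ell$-scale additive errors in the sandwich estimate are dominated by $s$ precisely in the large-scale regime where the quasisymmetry of $\xi$ is invoked, while in the complementary small-scale regime one controls everything by the finite-combinatorial slope comparisons alone.
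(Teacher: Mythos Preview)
Your proposal is correct and follows essentially the same route as the paper's proof: both reduce to the weak quasisymmetry condition, use the adjacent-slope comparison coming from $\eta$-quasisymmetry at endpoints of equal-length arcs, and split according to whether the scale $s$ is below or above roughly $3\ell$. The only notable difference is in the large-scale case: you use the sandwich estimate with inner and outer $E$-arcs sharing the pivot $p_{k+1}$, whereas the paper invokes the intermediate value theorem to produce points $o',p',q'$ in the relevant intervals with $\xi_E(o)=\xi(o')$ etc.\ and then applies $\eta$-quasisymmetry of $\xi$ directly to that triple; both arguments land on the same bound $\eta(5)$.
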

\noindent
{\emph Proof.}
Let $d$ denote the angular distance on $\T$.  Since $\xi$ is $\eta$-qua\-si\-sym\-met\-ric,  \cite[Proposition~10.8]{He} gives that there exists a constant $L\geq1$ that depends only on $\eta$ such that for any three distinct points $o, p, q\in E$ with $d(o,p)=d(o,q)$, we have 
\begin{equation}\label{E:qsineq}
d(\xi(o),\xi(p))\leq L d(\xi(o),\xi(q)).
\end{equation}

Let $a$ denote the angular length of each complementary interval of $E$. To verify that $\xi_E$ is quasisymmetric with a distortion function that depends only on $\eta$, it is enough to check that there exists a constant $C\ge 1$ that depends only on $\eta$ and has the following property.  If $o, p$, and $q$ are arbitrary points on $\T$ with $d(o,p)=d(o,q)=\delta$, for some $\delta>0$, then $d(\xi_E(o),\xi_E(p))\le C d(\xi_E(o),\xi_E(q))$.
We consider the following cases.

\noindent
\emph{Case 1:} $\delta\le a$ and $o, p, q$ are contained in the same complementary interval of $E$. This case is trivial and $C=1$ because $\xi_E$ is linear on each such interval.

\noindent
\emph{Case 2:} $\delta\le a$  and $o, p, q$ are not contained in the same complementary interval. We assume that $o$ and $p$ are contained in the closure of the same complementary interval $I$ of $E$ and $q$ is in the adjacent interval $I'$. Let us assume that $\xi_E$ scales $I$ by $s$ and it scales $I'$ by $s'$. We know from~\eqref{E:qsineq} that $s/L\le\ s'\le Ls$. Let $c\in E$ be the common point of the intervals $I$ and $I'$. Then 
$$
d(\xi_E(o),\xi_E(q))=d(\xi_E(o),\xi_E(c))+d(\xi_E(c),\xi_E(q))=sd(o,c)+s'd(c,q).
$$
For the last expression we have 
$$
sd(o,q)/L\le sd(o,c)+s'd(c,q)\le Lsd(o,q).
$$
Combining this with the assumptions that $d(o,p)=d(o,q)$ and that $\xi_E$ scales $I$ by $s$, we conclude that
$$
d(\xi_E(o),\xi_E(p))/L\le d(\xi_E(o),\xi_E(q))\le Ld(\xi_E(o),\xi_E(p)),
$$
and so $C=L$ in this case.

\noindent
\emph{Case 3:} $a<\delta\le3 a$. This case reduces to a repeated application of the proof of Case~2 at most 3 times. We  conclude that $C=L+L^2+L^{3}$ works in this case. 

\noindent
\emph{Case 4:} $\delta>3 a$. Let $I_o, I_p$, and $I_q$ be the closures of the complementary intervals of $E$ that contain $o, p$, and $q$, respectively. Since $\xi_E$ agrees with $\xi$ at the endpoints of each such interval, there exist $o', p'$, and $q'$ in $I_o, I_p$, and $I_q$, respectively, such that $\xi_E(o)=\xi(o'), \xi_E(p)=\xi(p')$, and $\xi_E(q)=\xi(q')$. Now,
$$
\begin{aligned}
\frac{d(\xi_E(o),\xi_E(p))}{d(\xi_E(o),\xi_E(q))}&=
\frac{d(\xi(o'),\xi(p'))}{d(\xi(o'),\xi(q'))}\leq \eta\left(\frac{d(o',p')}{d(o',q')}\right)\\
&\leq \eta\left(\frac{\delta+2a}{\delta-2a}\right)\leq\eta(5),
\end{aligned}
$$
and the claim follows in this case with $C=\eta(5)$.
\qed

\medskip

\begin{lemma}\label{L:BiLip}
Let $\mathcal L$ be a finite family of orientation preserving piecewise linear homeomorphisms between intervals of $\T$.
Let $\xi$ be an orientation preserving  homeomorphism of $\T$ and let $E$ be a finite subset of $\T$ such that all complementary intervals of $E$ in $\T$ have the same length. 
Suppose that $t$ is an orientation preserving  piecewise linear homeomorphism of $\T$ that agrees with $\xi$ on the set $E$. Moreover, assume that for each complementary interval $I$ of $E$  
 there are $M,n\in\N_0$ with $g^M\circ t\circ g^{-n}$ being an element of $\mathcal L$ defined on $g^n(I)$. 
Then there exists a constant $L\ge1$ that depends only on $\mathcal L$ such that the map $\xi_E^{-1}\circ t$ is $L$-bi-Lipschitz.
\end{lemma}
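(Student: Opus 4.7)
The plan is to exploit the fact that $\xi_E^{-1}\circ t$ preserves each complementary interval $I$ of $E$ and fixes its endpoints, and then to bound its local slopes uniformly via the dynamical identity $t|_I=g^{-M}\circ\ell\circ g^n$ with $\ell\in\mathcal L$. First I would observe that since $t$ and $\xi_E$ both agree with $\xi$ on $E$, they agree at the endpoints of every complementary interval $I$ of $E$, and so both map $I$ onto the same image arc $\xi(I)$. Consequently $\xi_E^{-1}\circ t\colon I\to I$ is a piecewise linear homeomorphism fixing $\di I$, and it suffices to bound its slopes independently of $I$.

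Next I would quantify the slopes. Because $\mathcal L$ is finite and each of its elements is piecewise linear on a compact interval, the set $\Sigma$ consisting of all slopes of all pieces of all elements of $\mathcal L$ is a finite set of positive reals; let $s_{\min}=\min\Sigma$ and $s_{\max}=\max\Sigma$. On $I$ one has $t=g^{-M}\circ\ell\circ g^n$ for some $\ell\in\mathcal L$, so the local slopes of $t$ on the sub-pieces of $I$ are of the form $2^{n-M}\sigma_j$ with $\sigma_j\in\Sigma$ the slope of $\ell$ on the corresponding sub-piece of $g^n(I)$. On the other hand, $\xi_E|_I$ is a single linear map with slope
\begin{equation*}
\frac{|\xi_E(I)|}{|I|}=\frac{|t(I)|}{|I|}=2^{n-M}\cdot\frac{|\ell(g^n(I))|}{|g^n(I)|}=2^{n-M}\,\bar\sigma,
\end{equation*}
where $\bar\sigma$ is the weighted average of the slopes $\sigma_j$ of $\ell$ over $g^n(I)$; in particular $s_{\min}\le\bar\sigma\le s_{\max}$.

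The crucial point is that the factor $2^{n-M}$ cancels: the slopes of $\xi_E^{-1}\circ t$ on the sub-pieces of $I$ are exactly $\sigma_j/\bar\sigma$, each lying in $[s_{\min}/s_{\max},\,s_{\max}/s_{\min}]$. Setting $L=s_{\max}/s_{\min}$, which depends only on $\mathcal L$, we see that on every complementary interval $I$ the map $\xi_E^{-1}\circ t$ is a piecewise linear self-homeomorphism of $I$ with all slopes in $[1/L,L]$, hence $L$-bi-Lipschitz there. Since the maps agree at the endpoints of each $I$, they paste together to give an $L$-bi-Lipschitz self-homeomorphism of $\T$.

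The main obstacle is the bookkeeping that makes the exponent $2^{n-M}$ drop out; this is what allows the constant $L$ to depend only on the finite family $\mathcal L$ and to be independent of $n$, $M$, and of the particular interval $I$. Once this cancellation is made explicit, the rest of the argument is essentially the elementary observation that the ratio of a slope of a piecewise linear function to its average slope over the interval is controlled by the ratio $s_{\max}/s_{\min}$ of extreme slopes.
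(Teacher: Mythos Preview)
Your proof is correct and follows essentially the same idea as the paper's: both exploit that $t$ and $\xi_E$ agree on $E$, so on each complementary interval $I$ the map $\xi_E^{-1}\circ t$ is a self-homeomorphism whose distortion is governed only by the finite family $\mathcal L$, with the dynamical factor $2^{n-M}$ cancelling out. The paper packages this cancellation slightly differently---it observes that $g^M\circ\xi_E\circ g^{-n}$ lies in a finite family $\mathcal L_0$ of linear maps (the ``linearizations'' of the elements of $\mathcal L$), so that $g^n\circ\xi_E^{-1}\circ t\circ g^{-n}=\ell_0^{-1}\circ\ell$ ranges over a finite set and is therefore uniformly bi-Lipschitz---whereas you compute the slopes explicitly and get the concrete constant $L=s_{\max}/s_{\min}$; but the substance is the same.
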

\no
{\emph{Proof.}}
The assumption implies, in particular, that there is a finite family $\mathcal L_0$ of orientation preserving  linear maps between intervals of $\T$, that depends only on $\mathcal L$, such that for each complementary interval $I$ of $E$ in $\T$ the map $g^M\circ\xi_E\circ g^{-n}$ is an element of $\mathcal L_0$. Therefore the map $g^n\circ\xi_E^{-1}\circ t\circ g^{-n}$ is a homeomorphism of $g^n(I)$ that belongs to a finite family, depending only on $\mathcal L$, of orientation preserving piecewise linear homeomorphisms. Hence there exists $L\ge1$ that depends only on $\mathcal L$ such that $g^n\circ\xi_E^{-1}\circ t\circ g^{-n}$ is an $L$-bi-Lipschitz homeomorphism of $g^n(I)$. Since $g$ is the scaling map (by the factor 2) in the angular metric, the map $\xi_E^{-1}\circ t$ is $L$-bi-Lipschitz on each $I$, and therefore on all of $\T$.
\qed

\medskip

Lemmas~\ref{L:Main} and~\ref{L:BiLip} imply that the restriction of the map $\tau$ to the boundary of each bounded Fatou component $U$ has a $K'$-quasiconformal extension to $U$, where $K'$ depends only on $\eta$. The set $E$ in Lemma~\ref{L:Main} is the set of dyadic points at level ${\lfloor (n+1)/2\rfloor -m}$,
where $m$ is the dynamical distance from $U$ to $U_0$.  The finite family $\mathcal L$ in Lemma~\ref{L:BiLip} comes from Lemmas~\ref{L:ConfElev} and~\ref{L:LargeScale}. Indeed, Lemma~\ref{L:ConfElev} guarantees the existence of a finite family $\mathcal F$ of subsets of $\Ju$ such that for any Julia arc $J\in\Pi_n$ we have $\tilde\Lambda=f^M(\xi(J))\in\mathcal F$ for some $M\in\N_0$. Lemma~\ref{L:LargeScale} then gives a piecewise dynamical map $\mathcal T$ from $\Lambda\setminus\{\alpha\}$ onto $\tilde\Lambda\setminus\{z_1,z_2\}$, in the notations of that lemma. Because the family $\mathcal F$ is finite, the family of such maps $\mathcal T$ is finite.
Now, if the defining external rays of $J$ land on the  boundary of $U$ and if we assume for simplicity that $\Lambda=f^n(J)$ rather than $\Lambda=f^{n+1}(J)$, then  we have
$
\mathcal T=f^M\circ\tau\circ f^{-n},
$
where the inverse branches of $f$ are chosen appropriately. Passing to the B\"ottcher coordinates of $U$ and $\tilde U=\xi(U)$ we conclude that the map
$
g^{M-\tilde m}\circ t\circ g^{-(n-m)},
$
belongs to a finite family $\mathcal L$, where $t$ is the conjugate map of the map $\tau$ by $\phi_U$, and $\tilde m$ is the dynamical distance from $\tilde U$ to $U_0$. Since $\xi_E$ is $\eta'$-quasisymmetric and $\xi_E^{-1}\circ t$ is $L$-bi-Lipschitz, we conclude that $t=\xi_E(\xi_E^{-1}\circ t)$ is $\eta'\circ L^2$-quasisymmetric, where $L^2$ is the scaling map by $L^2$.  
Thus, the Ahlfors--Beurling extension~\cite{BA} gives that there exists $K'\ge1$ that depends only on $\eta'$ and $L$, and hence only on $\eta$, such that the map $t$ has a $K'$-quasiconformal extension into $\D$. Conjugating back via B\"ottcher coordinates we conclude that $\tau$ has a $K'$-quasiconformal extension into every bounded Fatou component $U$. 

To deal with the unbounded component $U_\infty$, we first apply Lem\-ma~\ref{L:OuterDyadSubdiv} 
and then proceed in the same way as for bounded Fatou components, 
i.e., using  Lemmas~\ref{L:Main} and~\ref{L:BiLip}. 
The finite set $E$ in this case is the set $\psi(D_n)$, where $\psi$ is the bi-Lipschitz map guaranteed by Lemma~\ref{L:OuterDyadSubdiv}. 
The crucial difference is that, unlike the case of bounded Fatou components, 
the boundary of $U_\infty$ touches itself and one has to be careful to preserve the basilica lamination. 
By Lemma~\ref{homeos}, 
any topologically extendable  homeomorphism $\xi$ of $\Ju$ induces a homeomorphism $\xi_\infty$ of $\T$ 
that preserves the basilica lamination. 
By Lemma \ref{L:EmbQS}, $\xi_\infty$  is  $\eta'$-quasisymmetric  (with $\eta'$ depending only on $\eta$).
As $\xi$ is replaced by a piecewise dynamical map $\tau$, the map $\xi_\infty$ is replaced by a Thompson-like element $\tau_\infty$ satisfying 
$
\psi_\infty\circ\tau_\infty=\tau\circ\psi_\infty.
$
Such a map $\tau_\infty$ necessarily respects the basilica lamination.
By  Lemmas~\ref{L:Main} and~\ref{L:BiLip}, the map $\tau_\infty$ is $\eta''$-quasisymmetric with $\eta''$ that depends only on $\eta'$ and hence only on $\eta$. Therefore it  extends to a $K''$-quasiconformal homeomorphism of $\C\sm \D$
(with $K''$ depending only on $\eta''$, and ultimately, only on $\eta$).

Thus, the map $\tau$ has a $K''$-quasiconformal extension to $U_\infty$ as well, 
where $K''$ depends only on $\eta$. Putting this together, we obtain a $K$-quasiconformal extension of $\tau$ from $\Ju$ into each Fatou component of $f$, where $K=\max\{K',K''\}$. We  denote this extension by $\tau$ as well.
As above, the polynomial $f$ is postcritically finite, and, according to~\cite{Jo}, the Julia set $\Ju$ is removable for quasiconformal maps. The map $\tau$ is hence $K$-quasiconformal in the whole complex plane, and therefore $\eta'$-quasisymmetric for some $\eta'$ that depends only on $\eta$. Its restriction $\tau$ to $\Ju$ is thus also $\eta'$-quasisymmetric.

\begin{lemma}\label{lem:gps}
Any piecewise dynamical map $\tau$ of $\Ju$ belongs to the group $\hat T$ generated by $T$ and $\iota$.
\end{lemma}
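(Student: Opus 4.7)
The plan is to prove $\tau\in\hat T$ by successively composing with elements of $\hat T$ until we reduce to the identity, in finitely many steps. This is possible because $\tau$, being piecewise dynamical, has all its break points among the iterated preimages of $\alpha$ up to some finite level $N$.

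First I would use Lemma~\ref{L:Trans} to normalize the vertex permutation. Setting $U := \tau(U_0)$, that lemma yields $t_1 \in \hat T$ with $t_1(U) = U_0$; after replacing $\tau$ by $t_1\circ\tau$, I may assume $\tau(U_0) = U_0$. The restriction $\tau|_{\partial U_0}$ is then a piecewise dynamical self-map of $\partial U_0$, each of whose pieces has the form $f^{-m}\circ f^n$ with $n, m$ of the same parity (to preserve $\partial U_0$). In the B\"ottcher coordinate $\phi_0$ such a piece becomes $g^{(n-m)/2}$ with an appropriate branch, so $\tau|_{\partial U_0}$ is a piecewise linear map of $\T$ with dyadic break points and slopes that are integer powers of $2$, i.e., an element $s\in T$. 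Writing $\hat s\in T\subseteq\hat T$ for its extension per Lemma~\ref{lem:thom}, I would replace $\tau$ by $\hat s^{-1}\circ\tau$ and assume $\tau|_{\partial U_0} = \id$.

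Next, I would perform a tree-induction on the combinatorial distance $\de(V)$ in $\TT$. Lemma~\ref{L:Basepoint} combined with $\tau|_{\partial U_0}=\id$ forces $\tau$ to fix every satellite $V$ of $U_0$ together with its base point $\alpha_V$, so $\tau|_{\partial V}$ corresponds in $\phi_V$-coordinates to a Thompson group element fixing $1$. To cancel this action on $\partial V$ without disturbing $\partial U_0$, for each satellite $V$ attached at the dyadic point $d\in\partial U_0$ I would build an element of $\hat T$ by combining the dyadic-rotated inversion $\iota_d := r_d\,\iota\,r_d^{-1}$ (where $r_d\in T$ is the dyadic rotation sending $\alpha$ to $d$) with a Thompson element of $T$ fixing $d$. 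The key point, from the proof of Lemma~\ref{lem:thom}, is that Thompson elements fixing the root of a satellite act as the identity on that satellite in B\"ottcher coordinates, so the conjugation by $\iota_d$ transfers a Thompson action on $\partial U_0$ into one on $\partial V$ while being identity on $\partial U_0$. Finitely many such moves, one per non-trivial satellite of $U_0$, reduce $\tau$ to the identity on $\partial U_0$ and on $\partial V$ for every satellite $V$; then I would iterate the same procedure, conjugated by the localizing maps produced by Lemma~\ref{L:Trans}, at combinatorial depths $2,3,\dots$

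The induction terminates because, once the normalization has been carried out to a depth exceeding the break-level $N$ of the original $\tau$, every remaining piece of $\tau$ must contain an arc on which $\tau$ is already known to be the identity; analyticity of the elements of the pseudo-group $\Ups_f$ then forces each remaining dynamical piece to be the identity on its full domain, so $\tau=\id$ globally. The main obstacle I anticipate is the precise bookkeeping in the inductive step: I must verify that the $\hat T$-element used to cancel $\tau|_{\partial V}$ acts trivially in B\"ottcher coordinates on all previously-normalized boundaries, even though it can permute deeper Fatou components. This careful localization is exactly what the combination of Lemma~\ref{L:Trans}(i)--(iii) (transitivity along a single path $p$ with B\"ottcher respected off $p$ and only rotated on $p$) and the inversion $\iota$ (which swaps $\partial U_0\leftrightarrow\partial U_{-1}$ respecting B\"ottcher) is designed to deliver.
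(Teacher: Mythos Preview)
Your approach is essentially the paper's: reduce to $\tau(U_0)=U_0$ via Lemma~\ref{L:Trans}, peel off the Thompson action on $\partial U_0$, then use conjugates of Thompson elements (transported by the maps of Lemma~\ref{L:Trans}, which play the role of your $\iota_d$-localizers) to cancel the action on successively deeper components. The paper organizes this as an induction on the number $k$ of vertices in the smallest subtree $t\subset\TT$ containing $U_0$ on whose complement $\tau$ already respects B\"ottcher coordinates; at each step one picks a \emph{leaf} $U$ of $t$, conjugates a Thompson element through the Lemma~\ref{L:Trans} map for $U$, and shows the resulting $g\in\hat T$ respects B\"ottcher on \emph{every} $V\neq U$, so $k$ drops by one.

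The one soft spot in your write-up is termination. Appealing to ``the break-level $N$ of the original $\tau$'' does not work as stated: after each composition with an element of $\hat T$ the current map is no longer the original $\tau$, its dynamical pieces have changed, and the analyticity argument you sketch applies to pieces of the \emph{current} map, not the original one. What actually makes the induction finite is the localization you flagged as the main obstacle, but in a stronger form than you state: the cancelling element must act trivially in B\"ottcher coordinates not only on previously-normalized boundaries but on \emph{all} components other than the one being corrected (otherwise non-trivial action could be pushed ever deeper). Once you prove that---and Lemma~\ref{L:Trans}(i)--(iii) together with Lemma~\ref{lem:thom} do give it---the finite set of ``bad'' components strictly shrinks at each step, and no reference to the original break-level is needed. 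Rewriting your induction in terms of that shrinking finite tree (as the paper does) makes the termination transparent.
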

\no
{\emph{Proof.}}
It is proved in Lemma~\ref{L:Trans}  that the group $\hat T$ 
 acts transitively on the set of  bounded Fatou components. 
Suppose that $\tau(\dee U_0)=\dee \tilde U_0$ and let $g\in \hat T$ be such that $g(\dee U_0)=\dee \tilde U_0$. Then $g^{-1}\circ\tau$ keeps $\dee U_0$ invariant, and so, without loss of generality, we assume that $\tau$ keeps $\dee U_0$ invariant. Note that Lemma~\ref{L:Basepoint} implies that such $\tau$ sends the root of each $U\neq U_0$ to the root of $\tilde U=\tau(U)$.

Let $U$ be any bounded Fatou component of $f$ 
and $\tilde U=\tau(U)$. 
Let $\phi_U$ and $\phi_{\tilde U}$ be the B\"ottcher coordinates of $U$ and $\tilde U$, respectively.  
Since $\tau$ is assumed to be piecewise dynamical, the restriction of the map $\phi_{\tilde U}\circ\tau\circ\psi_U$ to the unit circle is an element of the Thompson group $T$ of the unit circle. Here $\psi_U=\phi_U^{-1}$. Moreover,  this map respects the B\"ottcher coordinate for all but finitely many bounded Fatou components $U$. This follows immediately from the fact that there are only finitely many global cut-points used in the definition of a piecewise dynamical map.
Let $t$ be the smallest subtree of $\TT$ that contains $U_0$ and such that $\tau$ respects the B\"ottcher coordinate of every bounded Fatou component $U$ that is not a vertex of $t$. 
Let $k$ be the number of vertices of $t$. 
We prove by induction on $k$ that there is an element $g$  of $\hat T$ such that  $\tau=g\in \hat T$. If $k=1$, then there exists $\theta\in T$ such that $\theta^{-1}\circ \tau$ is the identity element on $\dee U_0$. Moreover, since $\theta$ respects the B\"ottcher coordinate of each bounded Fatou component $U\neq U_0$, then $\theta^{-1}\circ \tau$  is necessarily the identity in the B\"ottcher coordinate of every $U$, and so it is the identity.

Now suppose the result is true for $k-1$. Let $U$ be a leaf of $t$, i.e., a degree one vertex, and let $\tilde U$ be the leaf of $\tilde{t}$ that corresponds to $U$ under $\tau$. Let $p$ be the unique path from $U_0$ to $U$ in $t$. By Lemma~\ref{L:Trans}, there is an element $g_U\in \hat T$ such that $g_U(\dee U)=\dee U_0$ and $g_U$ respects the  B\"ottcher coordinates of all bounded Fatou components $V$ with the possible exception of $V$ being a vertex of $p$. Likewise, there is an element $g_{\tilde U}\in \hat T$ that has the same properties with respect to $\tilde U$ and $\tilde{t}$. Also, $g_U$ and $g_{\tilde U}$ respect the B\"ottcher coordinates of $U$ and $\tilde U$, respectively. 

Then the restriction of  $g_{\tilde U}\circ \tau\circ g_{U}^{-1}$ to $\dee U_0$ 
equals to the restriction of an element $\theta_U\in T$, and hence  $ g_{\tilde U}\circ \tau\circ g_{U}^{-1}\circ\theta_U^{-1}$ is the identity on $\dee U_0$. Moreover, since $\tau$ takes the root of $U$ to the root of $\tilde U$, the map $\theta_U$ fixes the root $\alpha$ of $U_0$. The map $g=g_{U}^{-1}\circ\theta_U\circ g_{U}$ is then an element of $\hat T$ that keeps $U$ invariant, fixes the root of $U$, 
and respects the B\"ottcher coordinate of every $V\neq U$. 
In the B\"ottcher coordinates of $U$ and $\tilde U$, the map $\tau\circ g^{-1}$ is the identity. Also, it is the identity in B\"ottcher coordinates of every vertex $V$ that is not in $t$. Now we can apply the induction.
\qed

\medskip

Recall that in~\cite{BF} the authors studied the Thompson-like group $T_\bal^b$ 
of piecewise linear homeomorphisms of the unit circle that preserve the basilica lamination. 
Obviously, our group $\hat T$ generated by $T$ and $\iota$ is isomorphic to a subgroup of $T_\bal^b$. 
The following is an immediate corollary of Lemmas~\ref{L:homeos2} and~\ref{lem:gps}. 
\begin{corollary}\label{C:gps}
The groups $\hat T$ and $T_\bal^b$ are isomorphic.
\end{corollary}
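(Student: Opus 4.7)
The plan is to identify $\hat T$ as a concrete subgroup of $\Dyn(\Ju)$, and then transport the identification to $T_\bal^b$ via the isomorphism of Lemma~\ref{L:homeos2}. The corollary will follow by stringing together two inclusions and one isomorphism that have essentially already been established.

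First I would check that $\hat T \subseteq \Dyn(\Ju)$. The generators of the Thompson group $T$ act as piecewise dynamical quasisymmetries of $\Ju$ by Lemma~\ref{lem:thom}, and the inversion $\iota$ is also piecewise dynamical (it is defined piecewise by $f$ on $\Ju_\alpha$ and by a branch of $f^{-1}$ on $\Ju_0$, see Lemma~\ref{lem:inv}). Since $\Dyn(\Ju)$ is a group, any word in these generators and their inverses is piecewise dynamical, so $\hat T$ is contained in $\Dyn(\Ju)$. Next, the reverse inclusion $\Dyn(\Ju) \subseteq \hat T$ is exactly the content of Lemma~\ref{lem:gps}. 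Combining the two inclusions gives the identification $\hat T = \Dyn(\Ju)$ as subgroups of the homeomorphism group of $\Ju$.

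Now I would invoke Lemma~\ref{L:homeos2}, which provides a group isomorphism $\Dyn(\Ju) \to T_\bal^b$ sending $\xi$ to the induced circle map $\xi_\infty$ characterized by $\psi_\infty \circ \xi_\infty = \xi \circ \psi_\infty$. Restricting this isomorphism to the subgroup $\hat T \subseteq \Dyn(\Ju)$ and using $\hat T = \Dyn(\Ju)$ from the previous paragraph produces the desired isomorphism $\hat T \cong T_\bal^b$.

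There is essentially no obstacle here, since the two main lemmas do all the work: Lemma~\ref{lem:gps} (the inductive ``peeling off leaves of $\TT$'' argument combined with Lemma~\ref{L:Trans}) captures every piecewise dynamical self-map of $\Ju$ by a word in $T$ and $\iota$, while Lemma~\ref{L:homeos2} is the dictionary between piecewise dynamical maps of $\Ju$ and Thompson-like maps of $\T$ preserving the basilica lamination. The only small subtlety worth remarking on is that the isomorphism of Lemma~\ref{L:homeos2} is an intrinsic group isomorphism; Corollary~\ref{C:gps} asserts an abstract group isomorphism, so no further compatibility with metric or quasisymmetric structure need be checked.
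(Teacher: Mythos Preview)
Your argument is correct and matches the paper's approach exactly: the paper states that the corollary is an immediate consequence of Lemmas~\ref{L:homeos2} and~\ref{lem:gps}, and you have simply spelled out how these two lemmas combine (namely, $\hat T \subseteq \Dyn(\Ju)$ from the generators being piecewise dynamical, $\Dyn(\Ju) \subseteq \hat T$ from Lemma~\ref{lem:gps}, and then $\Dyn(\Ju) \cong T_{\bal}^b$ from Lemma~\ref{L:homeos2}).
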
  

\medskip

To finish the proof of Theorem~\ref{thm:thom} we need to show that $\tau$ approaches $\xi$ on $\Ju$ uniformly as $n$ goes to infinity.
Indeed, as one moves further away from the vertex of $\TT$ that corresponds to $U_0$, the diameters of the corresponding bounded Fatou components go to 0. This follows from the fact that $f$ is hyperbolic. Moreover, the diameter of each $J_k\in\Pi_n$ goes to 0 as $n\to\infty$.
This, along with the fact that $\tau$ agrees with $\xi$ at all the preimages of $\alpha$ under $f^n$, finishes the proof of Theorem~\ref{thm:thom}.
\qed

\end{document}